\theoremstyle{plain}
\newtheorem{thm}{Theorem}[section]
\newtheorem{prop}[thm]{Proposition}
\newtheorem{cor}[thm]{Corollary}
\newtheorem{lem}[thm]{Lemma}
\theoremstyle{remark}
\newtheorem{remark}[thm]{Remark}
\newcommand{\del}{\partial}
\newcommand{\N}{\mathbb{N}}
\newcommand{\R}{\mathbb{R}}
\title[Multiple slowly oscillating periodic solutions]{Multiple
	slowly oscillating periodic solutions for $x'(t) = f(x(t-1))$ with negative feedback}
\author[B. Kennedy]{Benjamin Kennedy}
\address{Gettysburg College\\Department of Mathematics \\
	300 North Wa\-shington St.\\Gettysburg, PA 17325-1400\\ United States}
\email{bkennedy@gettysburg.edu}
\author[E. Stumpf]{Eugen Stumpf}
\address{University Hamburg\\ Department of Mathematics\\
	Bundesstrasse 55\\20146 Hamburg\\Germany}
\email{eugen.stumpf@math.uni-hamburg.de}
\subjclass[2010]{Primary 34K13; Secondary 34C25}
\keywords{delay differential equations, periodic solutions, slowly oscillating, negative feedback}
\begin{document}

\begin{abstract}
We consider the prototype equation
\begin{equation*}
  x^{\prime}(t)=f(x(t-1))
\end{equation*}
for delayed negative feedback.  We review known results on uniqueness and nonuniqueness of slowly oscillating periodic solutions, and present some new results and examples.
\end{abstract}

\maketitle

\section{Introduction}
Consider the scalar-valued differential equation
\begin{equation}\label{eq: proto}
  x^{\prime}(t)=f(x(t-1))
\end{equation}
with constant delay $1$.  Throughout, we shall impose the following assumptions on the function $f$:
\begin{equation}\label{eq: assumptions}\tag{H}
  \left\{\begin{array}{l}\mbox{$f$ is continuous and bounded below or above}; \\[0.2cm]
\mbox{$f$ satisfies the negative feedback condition $xf(x) < 0$ for all $x \neq 0$}; \\[0.2cm]
\mbox{$f$ is $C^1$ on a neighborhood of $0$, with $f'(0) < 0$}.  \end{array} \right.
\end{equation}
Clearly, the continuity of $f$ implies $f(0)=0$; accordingly, the zero function $x(t)=0$, $t\in\R$, always satisfies Eq. \eqref{eq: proto} on all of $\R$.

Various applications have been proposed for Eq. \eqref{eq: proto}.
Nevertheless, it is undoubtedly the simplicity of Eq. \eqref{eq: proto} that has made it among the most intensively studied and best understood types of differential delay equations.

A \emph{slowly oscillating periodic solution} (SOP solution) of \eqref{eq: proto} is a periodic function $p: \R \to \R$ that satisfies Eq. \eqref{eq: proto} for all $t \in \R$ and has the feature that if $z \neq z'$ are distinct zeros of $p$, then $|z - z'| > 1$.  SOP solutions, when they exist, often play an important role in the global dynamics of Eq. \eqref{eq: proto}, and they have been studied for the last several decades.  Our goal in this paper is to review some known results on existence, uniqueness, and nonuniqueness of SOP solutions of Eq. \eqref{eq: proto}, to describe the prominent techniques used to obtain those results, and to provide some novel examples of equations with multiple SOP solutions.  In particular, among other examples we present a criterion for Eq. \eqref{eq: proto} to have multiple nontrivial SOP solutions when the equilibrium at $0$ is locally attracting, and a family of equations for which we can obtain multiple SOP solutions, one of which has arbitrarily long period, while keeping $f$ uniformly bounded.

In the usual way we write $C = C([-1,0],\R)$ for the Banach space of all continuous functions $\varphi:[-1,0]\to\R$ equipped with the norm $\|\varphi\|=\sup_{s\in[-1,0]}|\varphi(s)|$ of uniform convergence. If $x$ is any continuous real-valued function whose domain contains the interval $[t-1,t]$, the \emph{segment $x_t$ of $x$ at $t$} is the member of $C$ given by the formula $x_t(s) := x(t+s)$, $s \in [-1,0]$. A \emph{solution} of Eq. \eqref{eq: proto} is either a continuously differentiable function $x: \R \to \R$ that satisfies \eqref{eq: proto} for all $t\in\R$, or a continuous function $x : [-1,\infty)\to\R$ that is continuously differentiable for all $t>0$ and that satisfies Eq. \eqref{eq: proto} for all $t > 0$.  In either case, $x_{0}$ is called the \emph{initial value} of the solution $x$.  We regard $C$ as the phase space for Eq. \eqref{eq: proto}, and view solutions $x$ as describing orbits $\{x_t\}$ in $C$.

By the so-called \emph{method-of-steps}, it is easily seen that every function $\varphi\in C$ determines a solution $x^{\varphi}:[-1,\infty)\to\R$ of Eq. \eqref{eq: proto} with initial value $x_{0}^{\varphi}=\varphi$.  Indeed, for $t \in [0,1]$, we have the formula
\[
x^\varphi(t) =  \varphi(0) + \int_0^t f(\varphi(s - 1)) \ ds,
\]
and the formulas for $x^\varphi$ on $[1,2], \ [2,3], \ \ldots$ are then similar.  We call $x^\varphi$ the \emph{continuation of $\varphi$ as a solution of Eq. \eqref{eq: proto}}.  Eq. \eqref{eq: proto} defines a semiflow
\begin{equation*}
  F:[0,\infty)\times C\ni(t,\varphi)\longmapsto x^{\varphi}_{t}\in C
\end{equation*}
on the Banach space $C$. $F$ is continuous in the following sense: given $\varphi\in C$, $t\geq 0$, and $\varepsilon>0$, there exists $\delta>0$ such that for all $\psi\in C$ with $\|\psi-\varphi\|< \delta$ and all $0\leq s\leq t$,
\[
|F(s,\psi) - F(s,\varphi)| = |x^{\psi}(s)-x^{\varphi}(s)|<\varepsilon.
\]
Additionally, using the Arzela-Ascoli theorem it not difficult to show that all maps $F(t,\cdot): C \to C$, $t\geq 1$, are compact in the sense that they map bounded sets to precompact sets.

We define a solution $x$ to be \emph{slowly oscillating} if for any two zeros $z < z'$ of $x$ we have $z' - z > 1$. Similarly, a solution $x$ is called \emph{eventually slowly oscillating} if there is some $\tau\in\R$ such that for any two zeros $\tau<z<z'$ of $x$ it holds that $z'-z>1$.  Suppose that $\varphi \in C$ is such that (for example) $\varphi(s) > 0$ for all $s \in [-1,0]$.  Then either $x^\varphi(t) > 0$ for all $t > 0$ or not; in the latter case, there is some first positive zero $z$ of $x^\varphi$.  The negative feedback condition then implies that $f$ is strictly decreasing on $[z,z+1]$. Thus the next zero $z'$ after $z$, if it exists at all, satisfies $z' - z > 1$; repeating this argument shows that any successive zeros of $x^\varphi$ must be separated by more than one unit.  Thus eventually slowly oscillating solutions clearly exist.  Indeed, much more is true: if $f$ is smooth and strictly decreasing, the solution $x^\varphi$ is eventually slowly oscillating for a dense subset of initial values $\varphi \in C$ (see Mallet-Paret and Walther \cite{MPW}, and the earlier but somewhat more restricted work of Walther \cite{Walther 1981}); a similar conclusion seems plausible for Eq. \eqref{eq: proto}.  (We point out, however, that examples are known of equations of the form
\[
x'(t) = -\mu x(t) + f(x(t - 1)), \ \mu \geq 0
 \]
--- that is, like Eq. \eqref{eq: proto} with an instantaneous damping term added --- for which open sets of initial conditions have continuations that are not eventually slowly oscillating.  See, for example, Ivanov and Losson \cite{Ivanov and Losson 1999} and Stoffer \cite{Stoffer 2008}.  For such examples, $f$ must be non-monotone: again see \cite{MPW}.)

The zero function $0 \in C$ is a stationary point of the semiflow $F$: $F(t,0)=0$ for all $t\geq 0$. The linear problem associated with Eq. \eqref{eq: proto} at $0$ is the equation
\begin{equation}\label{eq: linearization}
y'(t) = f'(0)\,y(t-1).
\end{equation}
Using the ansatz $y(t)=e^{\lambda t}$ we obtain the corresponding characteristic equation
\[
\lambda - f'(0)\, e^{-\lambda} = 0.
\]
The roots of this characteristic equation coincide with the eigenvalues of the linearization of $F$ at $0 \in C$. It is also known that, when $f^{\prime}(0)<0$, there are only finitely many eigenvalues with nonnegative real part; there are eigenvalues with positive real part if and only if $f'(0)<-\pi/2$.  Accordingly, the zero solution $x(t)=0$, $t\in\R$, of Eq. \eqref{eq: proto} is locally asymptotically stable when $0>f'(0) > -\pi/2$ and unstable when $f'(0) < -\pi/2$ by the so-called principle of linearized stability and instability, respectively.  In the former case, the trivial solution of Eq. \eqref{eq: linearization} is globally attracting and the trivial stationary point of Eq. \eqref{eq: proto} is at least locally attracting. As we are interested in slowly oscillating periodic solutions we of course will only consider examples in which the trivial solution of Eq. \eqref{eq: proto} is at most locally, but not globally, attracting.  (Several authors have given conditions under which the trivial solution \emph{is} globally attracting; for example, in \cite{LR} Liz and R\"ost prove if $f$ is $C^3$, strictly decreasing, has negative Schwarzian derivative everywhere, and $f'(0) \geq -3/2$, then the zero solution $x(t)=0$, $t\in\R$, is globally attracting.)

\section{SOP solutions --- essential methods and results}

For general facts about delay differential equations we recommend the monograph \cite{Diekmann} of Diekmann et al..

\subsection*{The map $P: K \to K$}

For the remainder of this paper we restrict our attention to the case that $f$ is bounded above; the case where $f$ is bounded only below is essentially the same.  In particular, let us take $M > 0$ such that $f(x) \leq M$ for all $x \in \R$.

Let us write
\[
 K:= \{ \ \varphi \in C \ | \ \varphi(-1) = 0, \ \varphi \ \mbox{is nondecreasing}, \ \|\varphi\| \leq M \ \}.
\]
$K$ is a closed convex subset of $C$.

The following theorem is elementary and mostly intuitively clear.  For a proof we refer the reader to Diekmann et. al. \cite[Chapter XV]{Diekmann}, or to \cite{Walther 1995} where the more general equation $x^{\prime}(t)=-\mu\,x(t)+f(x(t-1))$ with $\mu\geq 0$ is discussed.

\begin{prop}\label{prop: basic 2}
  Let $\varphi\in K$, $\varphi \neq 0$.  Then $x=x^{\varphi}$ is slowly oscillating on $[0,\infty)$. If $z>0$ is a zero of $x$ then $|x|$ and $|x^{\prime}|$ are bounded on the interval $[z,z+1]$ by
        \begin{equation*}
          \max\lbrace|f(\xi)|\mid\xi\in x([z-1,z])\rbrace,
        \end{equation*}
        and the function $s \longmapsto |x_{z+1}(s)|$ is increasing on $[-1,0]$.

        If the zeroset of $x\vert_{[0,\infty)}$ is unbounded then it is given by a sequence of points $z_{j}=z_{j}(\varphi)$, $j\in\N$, with
        \begin{equation}\label{eq: so}\tag{so}
          z_{j}+1<z_{j+1}\qquad\text{and}\qquad x^{\prime}(z_{j})\not=0\text{ for all }j;
        \end{equation}
        and $x$ is monotone on $[0,z_{1}+1]$ and on each interval $[z_{j}+1,z_{j+1}+1]$.

        If the zeroset of $x\vert[0,\infty)$ is bounded then it is given by a finite sequence of points $z_{j}=z_{j}(\varphi)$, $1\leq j\leq J=J(\varphi)$, with property \eqref{eq: so}, and $|x(t)|$ decreases monotonically to $0$ on the interval $[z_{J}+1,\infty)$ as $t\to\infty$.
\end{prop}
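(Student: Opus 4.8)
The plan is to follow $x=x^{\varphi}$ forward interval by interval via the method of steps, using the negative feedback condition $\xi f(\xi)<0$ (for $\xi\neq 0$) together with $f(0)=0$ to pin down the sign of $x'$ on each unit interval that follows a zero. First I would record two facts to be used throughout: $f$ vanishes only at $0$; and if $z>0$ is a zero of $x$, then for $t\in[z,z+1]$ we have $|x'(t)|=|f(x(t-1))|\leq B_{z}:=\max\{\,|f(\xi)|\mid\xi\in x([z-1,z])\,\}$ (as $t-1\in[z-1,z]$), hence $|x(t)|=\bigl|\int_{z}^{t}x'(s)\,ds\bigr|\leq B_{z}(t-z)\leq B_{z}$ since $x(z)=0$ and $t-z\leq 1$; this is exactly the bound asserted in the proposition. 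Next, since $\varphi\in K\setminus\{0\}$ is nonnegative with $\varphi(0)>0$, I would set $a:=\sup\{\,s\in[-1,0]\mid\varphi(s)=0\,\}\in[-1,0)$, so that $\varphi\equiv 0$ on $[-1,a]$ and $\varphi>0$ on $(a,0]$. On $[0,1]$, $x'(t)=f(\varphi(t-1))$ vanishes on $[0,a+1]$ and is $<0$ on $(a+1,1]$, so $x\equiv\varphi(0)>0$ on $[0,a+1]$ and strictly decreases on $(a+1,1]$; more generally $x'(t)=f(x(t-1))\leq 0$ as long as $x>0$ on $[0,t-1]$. Hence either $x>0$ on all of $[0,\infty)$ — whence $x$ is nonincreasing, $x(t)\to L\geq 0$, and $L>0$ is impossible since it would force $x'(t)\to f(L)<0$ and $x\to-\infty$, so $x(t)\to 0$ and the degenerate empty-zeroset case is done — or $x$ has a first positive zero $z_{1}>a+1$. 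In the latter case $z_{1}-1>a$, so $x>0$ on $(a,z_{1})$, in particular on $[z_{1}-1,z_{1})$, and $x'(z_{1})=f(x(z_{1}-1))<0$; also $x'\leq 0$ on $[1,z_{1}]$ and on $[z_{1},z_{1}+1]$ (the relevant arguments lying in $[0,\infty)$), so $x$ is monotone on $[0,z_{1}+1]$.

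Then I would prove the inductive step. Suppose $z\geq 0$ is a zero of $x$ such that $x$ has a constant sign $\varepsilon\in\{+1,-1\}$ on $[z-1,z)$. This holds for $z=z_{1}$ with $\varepsilon=+1$ by the previous paragraph, and for a later zero $z=z_{j}$ because the separation $z_{j}-1>z_{j-1}$ (obtained at the previous stage) gives $[z_{j}-1,z_{j})\subseteq(z_{j-1},z_{j})$. Then for $t\in[z,z+1)$ the argument $x(t-1)$ has sign $\varepsilon$, so $x'(t)=f(x(t-1))$ has sign $-\varepsilon$, while $x'(z+1)=f(x(z))=0$; hence $x$ is strictly monotone on $[z,z+1]$, so $|x|$ increases from $0$ there — which is exactly the statement that $s\mapsto|x_{z+1}(s)|$ is increasing on $[-1,0]$ — and $x$ has the constant sign $-\varepsilon$ on $(z,z+1]$. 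Moreover $x'(z)=f(x(z-1))\neq 0$ because $x(z-1)\neq 0$, and the next zero of $x$, if it exists, lies beyond $z+1$. Iterating from $z_{1}$, one of two things happens: either $x$ has finitely many zeros (the bounded-zeroset case, treated below), or it has infinitely many, forming a sequence $z_{1}<z_{2}<\cdots\to\infty$. In either case the zeros satisfy property \eqref{eq: so} (so $x$ is slowly oscillating on $[0,\infty)$), $x$ has constant, nonzero, alternating sign on each gap $(z_{j},z_{j+1})$, and $x$ is monotone on each $[z_{j}+1,z_{j+1}+1]$, since for $t\in(z_{j}+1,z_{j+1}+1)$ the argument $x(t-1)$ lies in $(z_{j},z_{j+1})$, where $x$ has a single sign.

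Finally I would treat the bounded-zeroset case: let $z_{J}$ be the last zero, so $x$ has a constant nonzero sign $\sigma$ on $(z_{J},\infty)$; then for every $t>z_{J}+1$ the argument $x(t-1)$ has sign $\sigma$ and $x'(t)=f(x(t-1))$ has sign $-\sigma$. Thus $|x|=\sigma x$ is strictly decreasing on $[z_{J}+1,\infty)$; being bounded above by $|x(z_{J}+1)|\leq B_{z_J}$ and below by $0$, it tends to some $L\geq 0$. If $L>0$, then for large $t$ the value $x(t-1)$ stays in a compact set avoiding $0$, on which $|f|$ has a positive minimum $c$; then $\sigma x'(t)\leq -c$, so $\sigma x(t)\to-\infty$, contradicting $\sigma x=|x|\geq 0$. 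Hence $L=0$, i.e. $|x(t)|\to 0$. The only genuine care in all of this lies in the base step — allowing $z_{1}<1$, so that $[z_{1}-1,z_{1}]$ reaches into $[-1,0]$ where $x=\varphi$ may vanish on the initial segment $[-1,a]$, and treating the degenerate case in which $x$ never vanishes — and in arranging the induction so that the sign of $x$ on $[z_{j}-1,z_{j})$ is already known, which leans on the gap estimate $z_{j}-z_{j-1}>1$ that the same induction is producing simultaneously; the underlying dynamical content is elementary.
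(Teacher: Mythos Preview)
The paper does not supply its own proof of this proposition; it simply calls the result ``elementary and mostly intuitively clear'' and refers the reader to Diekmann et al.\ \cite[Chapter XV]{Diekmann} and to Walther \cite{Walther 1995}. Your argument is a correct, self-contained proof along exactly the lines one would expect from those references: a method-of-steps induction driven by the negative feedback condition, establishing at each stage the sign of $x$ on the unit interval preceding a zero and hence the sign of $x'$ on the unit interval following it. The bound on $|x|$ and $|x'|$ over $[z,z+1]$, the strict monotonicity of $|x_{z+1}|$, the gap estimate $z_{j+1}>z_j+1$, the nonvanishing of $x'(z_j)$, and the monotonicity on $[z_j+1,z_{j+1}+1]$ all fall out of this induction just as you describe. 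The care you take with the base step (allowing $\varphi$ to vanish on an initial segment $[-1,a]$ and handling the possibility $z_1\leq 1$) and with the compactness argument forcing $|x(t)|\to 0$ in the bounded-zeroset case is appropriate; there is nothing to correct.
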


Given initial value $\varphi \in K$, $\varphi \neq 0$, there are two possibilities: either $x^\varphi$ has a second positive zero $z$ and $x^\varphi_{z+1} \in K$, or not.  We define the return map $P: K \to K$ by setting $P(0) = 0$, $P(\varphi): = x^\varphi_{z+1}$, if $z>0$ exists, and $P(\varphi) := 0$ otherwise.  Here are the basic facts about $x^\varphi$ and the map $P$ (for a proof, see Lemma 2.7 in Nussbaum \cite{RDN 1973}).

\begin{prop}\label{prop: return map}
The map $P$ is continuous, and $P(K)$ has compact closure.
\end{prop}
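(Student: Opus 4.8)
The plan is to establish the two assertions separately; that $\overline{P(K)}$ is compact is routine, while the continuity of $P$ requires more care. Throughout write $g(\delta):=\max\{\,|f(\xi)|:|\xi|\le\delta\,\}$, a continuous nondecreasing function with $g(0)=0$.

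For compactness I would invoke the Arzela--Ascoli theorem. Let $\varphi\in K\setminus\{0\}$ and let $z=z(\varphi)$ be the second positive zero of $x^\varphi$, so that $P(\varphi)(s)=x^\varphi(z+1+s)$ and $P(\varphi)'(s)=f\bigl(x^\varphi(z+s)\bigr)$ for $s\in[-1,0]$. Applying the estimate in Proposition \ref{prop: basic 2} at the zero $z$ bounds $\|P(\varphi)\|$ and the Lipschitz constant of $P(\varphi)$ by $\max\{|f(\xi)|:\xi\in x^\varphi([z-1,z])\}$; and since $[z-1,z]$ lies between the first zero $z_1$ and $z$, the monotonicity statements of Proposition \ref{prop: basic 2} give $|x^\varphi|\le|x^\varphi(z_1+1)|$ there, while the same estimate at $z_1$ (with $x^\varphi([z_1-1,z_1])\subseteq[0,M]$) gives $|x^\varphi(z_1+1)|\le g(M)$. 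Hence every element of $P(K)$ is bounded by $M$ and Lipschitz with constant $g(g(M))$, so $P(K)$ is precompact.

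For continuity I would use: the joint continuity of the semiflow $F$ (so $\psi\mapsto x^\psi$ is continuous into $C([-1,t],\R)$ for every $t$); the simplicity of each positive zero of $x^\varphi$, $\varphi\in K\setminus\{0\}$, which is part of property \eqref{eq: so}; the estimate in Proposition \ref{prop: basic 2}; and the following elementary consequence of negative feedback. If $x=x^\psi$ is slowly oscillating, $\zeta>0$ is a zero of $x$, $x$ has no zero in $(\zeta,a)$ for some $a\ge\zeta+1$, and $|x|\le\delta$ on $[a-1,a]$, then by negative feedback $x$ is monotone toward $0$ on the stretch from $a$ to its next zero $z$, so $|x|\le\delta$ on $[a-1,z]$ and hence $\|x_{z+1}\|\le g(\delta)$ by Proposition \ref{prop: basic 2} applied at $z$; and if no such $z$ exists, $|x|$ decreases monotonically to $0$. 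Now fix $\varphi\in K$. If $P(\varphi)=x^\varphi_{z_2+1}\neq0$, with first two zeros $z_1<z_2$ of $x^\varphi$, then $z_1,z_2$ are simple and $x^\varphi$ is bounded away from $0$ on each compact subinterval of $[0,z_1)\cup(z_1,z_2)$; a routine argument using continuity of $F$ then shows that for $\psi$ near $\varphi$ the solution $x^\psi$ has exactly two zeros $z_1(\psi)<z_2(\psi)$ in $[0,z_2+1]$, that these depend continuously on $\psi$, and that $z_i(\psi)\to z_i(\varphi)$; hence $P(\psi)=x^\psi_{z_2(\psi)+1}\to x^\varphi_{z_2(\varphi)+1}=P(\varphi)$.

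The remaining case $P(\varphi)=0$ is, I expect, the main obstacle. Here either $\varphi=0$, or $x^\varphi$ has at most one positive zero, and in the latter case Proposition \ref{prop: basic 2} (or a one-line argument when there is no zero at all) shows $x^\varphi$ is eventually monotone with $x^\varphi(t)\to0$. Applying the elementary consequence above at the first positive zero of $x^\psi$, with $\delta=g(\|\psi\|)$ and using $|x^\psi|\le\|\psi\|$ up to that zero, yields the uniform bound $\|P(\psi)\|\le g(g(\|\psi\|))$ whenever $z_2(\psi)$ exists, and $P(\psi)=0$ otherwise; in particular $P$ is continuous at $0$. For $\varphi\neq0$ I would argue: given $\delta>0$, using the simplicity of the zero of $x^\varphi$ and $x^\varphi(t)\to0$, choose a large time $T$ and a neighborhood of $\varphi$ so that for every $\psi$ in it the solution $x^\psi$ has at most one zero before $T$ (near the zero of $x^\varphi$, if any, and simple), has constant sign on some $[\zeta,T]$ past that zero, and satisfies $|x^\psi|\le\delta$ on $[T-1,T]$; then the elementary consequence with $a=T$ — preceded, when $x^\varphi$ has no zero, by one application of Proposition \ref{prop: basic 2} at the first zero of $x^\psi$ — gives $\|P(\psi)\|\le g(g(\delta))$ whenever $z_2(\psi)$ exists, and $P(\psi)=0$ otherwise. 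Letting $\delta\to0$ finishes the proof. The essential difficulty here is that the second zero of a nearby solution may occur at a time far beyond any horizon on which continuous dependence gives control; the device is that negative feedback makes $|x^\psi|$ nonincreasing once its sign is fixed, so that the smallness established at the fixed large time $T$ is carried unchanged all the way to that zero.
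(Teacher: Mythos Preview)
The paper does not actually prove this proposition; it simply refers the reader to Lemma~2.7 in Nussbaum~\cite{RDN 1973}. Your proposal therefore supplies what the paper omits, and your overall strategy --- Arzel\`a--Ascoli for precompactness, simplicity of zeros plus continuous dependence when $P(\varphi)\neq 0$, and a monotonicity-propagates-smallness argument when $P(\varphi)=0$ --- is exactly the standard route and is sound.

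One place deserves a little more care. In the case $\varphi\neq 0$, $P(\varphi)=0$, you assert that one can choose $T$ and a neighborhood of $\varphi$ so that $x^\psi$ has \emph{at most one} zero before $T$ while also $|x^\psi|\le\delta$ on $[T-1,T]$. These two requirements are in mild tension: if $|x^\varphi|$ is already very small on $[T-1,T]$, a small perturbation could push $x^\psi$ across zero there. The fix is to choose the neighborhood after $T$, small enough that $\|x^\psi-x^\varphi\|_{[0,T]}$ is less than (say) $\tfrac12\min_{[z_1+\epsilon,\,T]}|x^\varphi|$; since $x^\varphi$ is strictly negative on $(z_1,\infty)$ this minimum is positive for each fixed $T$, and one then genuinely forces $z_2(\psi)>T$. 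With that order of quantifiers made explicit your ``elementary consequence'' applies cleanly with $a=T$. This is a routine sharpening rather than a gap in the idea, but as written the sentence ``choose a large time $T$ and a neighborhood of $\varphi$ so that\ldots'' hides the dependence.
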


Suppose that $\varphi$ is a nonzero periodic point of $P$ with minimal period $k$.  Then, writing $0 < z_1 < \ldots < z_{2k} < \cdots$ for the positive zeros of $x^\varphi$, we have that $x^\varphi_{z_{2k}+1} = \varphi$ and that $x$ is a periodic solution of Eq. \eqref{eq: proto} with minimal period $z_{2k}+1$.  Conversely, any slowly oscillating periodic solution of Eq. \eqref{eq: proto} has a segment that is a periodic point of $P$.  Accordingly, perhaps the most prominent technique for proving the existence of SOP solutions of nonlinear equations Eq. \eqref{eq: proto} and its generalizations has been to show that analogs of the return map $P : K \to K$ have nonzero fixed points.  Since $0 \in K$ is a fixed point of $P$, merely establishing the existence of at least one fixed point (by a naive application of Schauder's Theorem, for example) is insufficient.

We do, however, have the following theorem.  This is perhaps the best-known theorem on existence of SOP solutions of \eqref{eq: proto} and was given by Nussbaum in 1974 \cite{RDN 1974}.

\begin{thm}\label{thm:first}
If, in addition to the hypotheses \eqref{eq: assumptions}, we assume that $f'(0) < -\pi/2$ (and so the equilibrium at the origin is unstable), then $P$ has a nonzero fixed point.
\end{thm}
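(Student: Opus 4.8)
The plan is to apply an ejective fixed point theorem (Browder's theorem, in the form used by Nussbaum) to the return map $P:K\to K$. Since $K$ is a closed convex subset of the Banach space $C$ with $P(K)$ precompact (Proposition~\ref{prop: return map}), Schauder's theorem already gives a fixed point; the point is that $0\in K$ is a fixed point, so we must produce a \emph{second} one. The mechanism for this is ejectivity: if one shows that $0$ is an ejective fixed point of $P$ — meaning there is a neighborhood $U$ of $0$ in $K$ such that for every $\varphi\in U\setminus\{0\}$ there is an integer $n=n(\varphi)$ with $P^{n}(\varphi)\notin U$ — then, because $K$ is convex and infinite-dimensional and $P$ is compact, Browder's theorem yields a fixed point in $K\setminus\{0\}$. (One must also check the mild technical hypothesis that $K$ is not contained in any finite-dimensional subspace and that $\{0\}$ is not itself a retract issue; this holds because $K$ has nonempty interior relative to an infinite-dimensional cone.)

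So the real work is the \textbf{ejectivity of $0$}. Here is where $f'(0) < -\pi/2$ enters. First I would set up the linearization: near $0$, $f(x) = f'(0)x + o(x)$, so solutions of Eq.~\eqref{eq: proto} with small initial data are governed by Eq.~\eqref{eq: linearization}, $y'(t) = f'(0)y(t-1)$. Because $f'(0) < -\pi/2$, the characteristic equation $\lambda - f'(0)e^{-\lambda} = 0$ has a pair of complex-conjugate roots $\lambda_0 = u_0 \pm i v_0$ with $u_0 > 0$ (and these are the only roots with positive real part); this splits $C$ as a direct sum $C = L \oplus Y$ of the two-dimensional "unstable" eigenspace $L$ (on which the semiflow of the linear equation expands like $e^{u_0 t}$) and a complementary closed subspace $Y$ on which it contracts or grows slower. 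Let $\pi_L : C \to L$ be the associated projection. The heuristic is that a small solution starting off the stable manifold has its $L$-component grow exponentially, hence it must leave any fixed small neighborhood; and one shows that the relevant initial segments in $K$ do have nonzero $L$-component, essentially because a function in $K$ that is genuinely nondecreasing and nonzero cannot be "flat" enough to lie in the slowly decaying directions — this is the slowly-oscillating geometry encoded in Proposition~\ref{prop: basic 2}.

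Concretely, I would argue by contradiction: suppose $0$ is not ejective, so there is a sequence $\varphi_n \to 0$ in $K$, $\varphi_n \neq 0$, with $P^{k}(\varphi_n)$ staying in a fixed small ball for all $k \geq 0$. Normalize $\psi_n := \varphi_n / \|\varphi_n\|$; passing to a subsequence (using precompactness of $P(K)$ and the structure of $K$) one extracts a limiting object on which the \emph{linear} semiflow acts, and whose forward orbit stays bounded — but on the two-dimensional unstable eigenspace the linear return map has both eigenvalues of modulus $>1$, so the only bounded orbit through the origin is the orbit of points with zero $L$-component. One then has to rule this out: a rescaled limit of elements of $K$ cannot have $\pi_L$-component equal to zero, because segments of slowly oscillating solutions track the leading oscillatory mode. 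Quantifying this last point — controlling the nonlinear remainder uniformly along the whole forward orbit, and showing the $L$-component of the rescaled limit is genuinely nonzero — is the main obstacle; it is exactly the delicate estimate carried out in Nussbaum's 1974 paper, and I would follow that argument, using Proposition~\ref{prop: basic 2} to control $|x^\varphi|$ and $|(x^\varphi)'|$ on successive unit intervals so that the method-of-steps integral representation of $x^\varphi$ makes the $o(\cdot)$ error terms uniformly small relative to $\|\varphi\|$ as $\varphi \to 0$.
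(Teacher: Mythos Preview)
Your proposal is correct and follows exactly the approach the paper attributes to Nussbaum: the paper does not give its own proof of this theorem but simply states that the instability at the origin makes $0\in K$ an ejective fixed point of $P$, whence Browder's ejective fixed point principle yields a non-ejective (hence nonzero) fixed point. Your sketch fleshes out precisely this strategy---spectral decomposition at $0$, growth along the leading unstable eigenspace, and the rescaling/compactness argument to establish ejectivity---and you rightly flag that the delicate step (showing the $\pi_L$-component of rescaled limits from $K$ is nonzero and controlling the nonlinear remainder) is the content of Nussbaum's 1974 paper; one small inaccuracy is that for $f'(0)<-5\pi/2$ there is more than one conjugate pair of characteristic roots with positive real part, but only the leading pair is needed for the ejectivity argument, so this does not affect the proof.
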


The idea of Nussbaum's proof is to show that the instability of the origin makes $0 \in K$ a so-called \emph{ejective fixed point} of $P$ and then to apply the so-called Browder ejective fixed point principle \cite{Browder 1965}, which implies that $P$ must have a fixed point that is not ejective.  This same basic idea has since been used to prove the existence, when the equilibrium is unstable, of nontrivial periodic solutions in several generalizations and extensions of Eq. \eqref{eq: proto}: these generalizations include versions of \eqref{eq: proto} with instantaneous damping (Hadeler and Tomiuk \cite{Hadeler and Tomiuk 1977}) and various equations with state-dependent delay (e.g. Nussbaum \cite{RDN 1974}, Alt \cite{Alt 1979}, Kuang and Smith \cite{Kuang and Smith 1992}, and Walther \cite{Walther 2008}).

We now turn to some other important methods for studying SOP solutions of Eq. \eqref{eq: proto}.

\subsection*{When $f$ is odd --- phase plane methods}

An entirely different approach to proving existence of SOP solutions in the case that $f$ in Eq. \eqref{eq: proto} is odd was pioneered by Kaplan and Yorke in 1974 \cite{Kaplan and Yorke 1974}.  In this case the coupled two-dimensional system of ordinary differential equations
\begin{equation}\label{eq: ODE}
  \left\lbrace\begin{aligned}
    u^{\prime}&=f(v)\\
    v^{\prime}&=-f(u)
  \end{aligned}\right.
\end{equation}
has a first integral of the form
\begin{equation}\label{eq: Hamiltonian}
  H(u,v):=-\int_{0}^{u}f(s)\,ds-\int_{0}^{v}f(s)\,ds.
\end{equation}

Eq. \eqref{eq: ODE} is highly symmetric: if $(u(\cdot),v(\cdot)):I\to\R^{2}$, $I\subset\R$ an interval, is a solution of Eq. \eqref{eq: ODE}, then $(-u(\cdot),-v(\cdot)):I\to\R^{2}$, $(v(\cdot),-u(\cdot)):I\to\R^{2}$, and $(-v(\cdot),u(\cdot)):I\to\R^{2}$ are also solutions of Eq. \eqref{eq: ODE}, as simple calculations show. Note that $H(0,0)=0$ whereas $H(u,v)>0$ for all $(u,v)\not=(0,0)$, and $\nabla H(u,v)=0\in\R^{2}$ if and only if $(u,v)=(0,0)$; it follows that, given any real $\alpha>0$ with $\alpha\in H(\R^{2}):=\lbrace H(u,v)\in[0,\infty)\mid u,v\in\R\rbrace$, the level set $H^{-1}(\lbrace \alpha\rbrace )$ is a simple closed curve around the origin in $\R^{2}$. This curve is the orbit of a periodic and nonconstant solution $(u(\cdot),v(\cdot)):\R\to\R^{2}$ of Eq. \eqref{eq: ODE}.  From the symmetry of $H$ we conclude that in this situation the orbits of $(u(\cdot),v(\cdot))$, $(-u(\cdot),-v(\cdot))$, $(v(\cdot),-u(\cdot))$, and $(-v(\cdot),u(\cdot))$ coincide; that is, the orbit $H^{-1}(\lbrace \alpha\rbrace)$ is invariant under rotations by $\pi/2$, and all the solutions $(-u(\cdot),-v(\cdot))$, $(v(\cdot),-u(\cdot))$, and $(-v(\cdot),u(\cdot))$ are translations of the periodic solution $(u(\cdot),v(\cdot))$.

 It was observed by Kaplan and Yorke in \cite{Kaplan and Yorke 1974} that any solution $(u(\cdot),v(\cdot)):\R\to\R^{2}$ of Eq. \eqref{eq: ODE} with period four is associated to a periodic solution of Eq. \eqref{eq: proto}: namely, $x(t):=u(t)$, $t\in\R$, satisfies Eq. \eqref{eq: proto}, is periodic of minimal period four, and has the special symmetry $x(t)=-x(t-2)$, $t\in\R$. We shall refer to such a solution as a \emph{Kaplan-Yorke solution} of Eq. \eqref{eq: proto}.

The existence result obtained in \cite{Kaplan and Yorke 1974} for Eq. \eqref{eq: proto} is the following.  The hypotheses guarantee that Eq. \eqref{eq: ODE} has a solution of period four.

\begin{thm}\label{KYTHM}
Assume that, in addition to hypothesis \eqref{eq: assumptions}, $f$ is odd and that $\int_0^\infty |f(x)| \ dx = \infty$.  Write
\[
a = \lim_{x \to 0} \frac{f(x)}{x} \ \ \mbox{and} \ \ A = \lim_{x \to \infty} \frac{f(x)}{x}.
\]
If either $A < -\pi/2 < a$ or $a < -\pi/2 < A$, then \eqref{eq: proto} has a Kaplan-Yorke solution.
\end{thm}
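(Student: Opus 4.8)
The plan is to invoke the Kaplan--Yorke correspondence recorded above: it suffices to produce an $\alpha>0$ for which the periodic solution $(u(\cdot),v(\cdot))$ of \eqref{eq: ODE} lying on the level curve $H\inv(\{\alpha\})$ has minimal period exactly $4$, since then $x=u$ is the desired Kaplan--Yorke solution of \eqref{eq: proto}. I would first isolate the one place the integrability hypothesis enters. Because $f(s)<0$ for $s>0$, the function $G(\rho):=-\int_{0}^{\rho}f(s)\,ds=\int_{0}^{\rho}|f(s)|\,ds$ is strictly increasing on $[0,\infty)$, and $\int_{0}^{\infty}|f|=\infty$ makes it a homeomorphism of $[0,\infty)$ onto $[0,\infty)$. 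Since $H(\rho,0)=G(\rho)$, this gives $H(\R^{2})=[0,\infty)$, so a periodic orbit exists for every $\alpha>0$; moreover that orbit meets the positive $u$-axis in exactly one point $(\rho(\alpha),0)$, with $\rho(\alpha):=G\inv(\alpha)$, and, by the symmetry of \eqref{eq: ODE} recalled above, the positive $v$-axis in exactly $(0,\rho(\alpha))$. Write $T(\alpha)$ for the minimal period of this orbit.

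Next I would compute $T(\alpha)$ by way of a quarter period. Starting at $(\rho(\alpha),0)$ the solution enters the open first quadrant (there $u'=f(v)<0$ and $v'=-f(u)>0$), and, since the flow of \eqref{eq: ODE} is equivariant under rotation by $\pi/2$ --- which is where the oddness of $f$ enters --- it first returns to the $v$-axis at $(0,\rho(\alpha))$ after a time $q(\alpha)$, and iterating gives $T(\alpha)=4q(\alpha)$ (the four axis-intersections being distinct). Along the first-quadrant arc $u$ decreases strictly from $\rho(\alpha)$ to $0$ while $G(u)+G(v)=\alpha$, so integrating $dt=du/u'$ and substituting $w=G(u)$ yields
\[
q(\alpha)=\int_{0}^{\alpha}\frac{dw}{g(w)\,g(\alpha-w)},\qquad g(w):=-f\bigl(G\inv(w)\bigr)=G'\bigl(G\inv(w)\bigr)>0.
\]
This representation shows $q$ is continuous on $(0,\infty)$ and makes $T(\alpha)$ well defined even when $f$ is merely continuous, since $u$ and $v$ are monotone on each quadrant-arc. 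After the rescaling $w=\alpha s$ one has $q(\alpha)=\alpha\int_{0}^{1}\bigl[g(\alpha s)\,g(\alpha(1-s))\bigr]\inv\,ds$, and the two limits I need follow from the asymptotics of $g$. Near $0$, $f(s)=as+o(s)$ (so $a=f'(0)$) gives $g(w)\sim\sqrt{-2aw}$, whence, using $\int_{0}^{1}ds/\sqrt{s(1-s)}=\pi$, $q(\alpha)\to-\pi/(2a)$ as $\alpha\to0^{+}$. As $\alpha\to\infty$ one has $\rho(\alpha)\to\infty$, and $f(s)/s\to A$ gives $g(w)\sim\sqrt{-2Aw}$ for large $w$, which should give $q(\alpha)\to-\pi/(2A)$ (read as $0$ if $A=-\infty$ and as $+\infty$ if $A=0$; note $A\le0$ by negative feedback).

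Granting these limits, the theorem is an application of the intermediate value theorem: if $a<-\pi/2<A$, then $q(\alpha)\to-\pi/(2a)<1$ as $\alpha\to0^{+}$ while $q(\alpha)\to-\pi/(2A)>1$ as $\alpha\to\infty$, so $q(\alpha^{*})=1$, i.e. $T(\alpha^{*})=4$, for some $\alpha^{*}>0$; the case $A<-\pi/2<a$ is identical with the inequalities reversed. I expect the one genuinely technical step to be the limit at infinity. At $0$ the estimate is clean because, for $\alpha$ small, $\alpha s$ and $\alpha(1-s)$ stay in the range where $g(w)/\sqrt{-2aw}$ is close to $1$ for every $s\in(0,1]$, so the error is uniform in $s$; at infinity this breaks down, since for $s$ near $0$ or $1$ the arguments $\alpha s$, $\alpha(1-s)$ need not be large. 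The remedy is to split the $s$-integral into a central piece $[\varepsilon,1-\varepsilon]$, on which $g(\alpha s)/\sqrt{-2A\alpha s}\to1$ uniformly, and the two end pieces, whose contribution must be controlled by something $O(\sqrt{\varepsilon})$ uniformly in large $\alpha$, using only that $g(w)\geq c\sqrt{w}$ near $w=0$ and that $g$ grows at least like $\sqrt{w}$ at infinity; then let $\varepsilon\to0$. The degenerate cases $A=-\infty$ and $A=0$ are handled by the same splitting, using instead that $g(w)/\sqrt{w}\to\infty$, respectively $\to0$.
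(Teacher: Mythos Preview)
The paper does not prove this theorem; it is quoted from Kaplan and Yorke \cite{Kaplan and Yorke 1974}. What the paper does do, in the paragraphs surrounding the statement and later in Lemma~\ref{lem: period cont}, is recall the underlying mechanism: parametrize the closed orbits of \eqref{eq: ODE} by the point $u_{0}$ where they cross the positive $u$-axis, show that the quarter-period $\tau(u_{0})$ is continuous (via continuous dependence of ODE solutions on initial data), establish the limits $\tau(u_{0})\to-\pi/(2f'(0))$ as $u_{0}\to0^{+}$ and $\tau(u_{0})\to\infty$ as $u_{0}\to\infty$ (the latter under the extra hypothesis that $f$ is eventually constant, i.e.\ $A=0$), and then apply the intermediate value theorem. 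The full limit statements for general $A$ are deferred to \cite{Kaplan and Yorke 1974}.

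Your proposal follows the identical intermediate-value-theorem strategy on the quarter period, so at the level of ideas it matches both the paper's sketch and the original Kaplan--Yorke argument. The genuine difference is in implementation: rather than parametrizing by $u_{0}$ and invoking ODE continuous-dependence, you parametrize by the energy $\alpha$ and derive the closed-form representation $q(\alpha)=\int_{0}^{\alpha}[g(w)g(\alpha-w)]^{-1}\,dw$. This is a legitimate alternative and arguably cleaner, since continuity of $q$ and the limit $q(\alpha)\to-\pi/(2a)$ fall out of the formula directly, and the formula does not require $f$ to be $C^{1}$ away from $0$ (the paper's Lemma~\ref{lem: period cont} assumes global $C^{1}$ smoothness to invoke uniqueness and continuous dependence). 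What your route costs is the analysis of the limit at infinity, which --- as you correctly flag --- needs a splitting argument because the endpoints $s\approx0,1$ sample $g$ near $0$ regardless of how large $\alpha$ is. Your sketch of that splitting is sound: when $A\in(-\infty,0)$ the ratio $g(w)/\sqrt{w}$ is continuous and positive on $(0,\infty)$ with finite positive limits at both ends, hence bounded below, and that global bound $g(w)\geq c\sqrt{w}$ makes the end contributions $O(\sqrt{\varepsilon})$ uniformly in $\alpha$; the degenerate cases $A=0$ and $A=-\infty$ follow by monotone/Fatou-type reasoning on the central piece alone. So your argument is correct, differing from the paper's treatment only in that you supply an explicit period formula where the paper appeals to ODE theory and to the original reference.
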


In \cite{RDN 1979}, Nussbaum combines the above theorem with Schauder's theorem --- roughly speaking applied on a subset of a variant  of $K$ --- to exhibit an example where Kapan-Yorke solutions coexist with solutions of period greater than 4.  We will present some other examples of a similar flavor below.

\subsection*{When $f$ is strictly decreasing}

As already suggested in the last section, the case that $f$ is smooth and strictly decreasing is particularly well understood.  A first major step in this understanding was made by Kaplan and Yorke in \cite{Kaplan and Yorke 1975}, who considered the orbits traced out by slowly oscillating solutions in the $(x(t),x(t-1))$ plane.  The chief observation was that the ways in which these orbits can cross in the plane is sharply limited.  The main theorem in \cite{Kaplan and Yorke 1975}, says, roughly speaking, that if $f'(0) < -\pi/2$ these orbits tend toward an annulus in the plane whose inner and outer boundary curves are the orbits of SOP solutions.  In particular, if Eq. \eqref{eq: proto} has a unique SOP solution in this case, this SOP solution is necessarily asymptotically stable.

In \cite{RDN 1979}, Nussbaum built on the phase plane approach in \cite{Kaplan and Yorke 1975} to give conditions under which this SOP solution is unique (it is sufficient, for example, that $f$ be smooth and odd with $f'(x)$ and $f(x)/x$ increasing on the positive half line).  Also using a phase plane approach, Cao \cite{Cao 1996} established uniqueness of SOP solutions for $f$ smooth and strictly decreasing and with a slighty different concavity condition, but without the assumption that $f$ is odd.

The behavior of all slowly oscillating soltions of Eq. \eqref{eq: proto} in the case that $f$ is smooth and strictly monotonic has come to be thoroughly understood; see, for example, Walther \cite{Walther 1995}.  Among other results, for example, it is known that in this case all SOP solutions correspond to fixed points (as opposed to higher-period periodic points) of the map $P: K \to K$.

\subsection*{The fixed point index}

Other existence results, as well as uniqueness and nonuniqueness results, have depended on the so-called fixed point index.  We shall use the index below and give some of its flavor; very roughly speaking, the index sometimes allows us to use information on subsets on $K$ where we understand $P$ well to draw conclusions about the existience of fixed points on subsets of $K$ where we understand $P$ less well.  We mention here a handful of earlier results.  In \cite{RDN 1973}, Nussbaum used a fixed point index approach to establish the existence of SOP solutions under hypotheses similar to those in Theorem \ref{KYTHM}, but without the assumption that $F$ is odd; our work in Section \ref{SEC3} below bears a strong conceptual affinity with Nussbaum's approach.  In \cite{Xie 1991}, Xie combined stability estimates and the fixed point index to obtain, in certain cases, uniqueness of fixed points of $P$.

\subsection*{Stability of SOP solutions}

Assessing the stability of SOP solutions is, in general, a difficult problem.  The stability implied by uniqueness in the case that $f$ is monotonic, alluded to above, is one result; and various authors (see, for example, Cao \cite{Cao 1995}) have used Kaplan-Yorke-type phase plane techniques with $f$ monotonic to conclude, based on how solutions ``spiral'' in the $x(t),x(t-1)$ plane, that certain SOP solutions are stable or unstable.  Other authors have observed that, loosely speaking, if $f$ is sufficiently flat on long enough intervals, the return map $P$ becomes contractive (at least on certain subsets of $K$) and so we can conclude that asymptotically stable SOP solutions exist.  This basic idea has been used by several authors; see for example, Walther \cite{Walther 2001} and Vas \cite{Vas 2011}.

Several authors have also established results on the Floquet multipliers of SOP solutions, especially in the case that $f$ is odd.  (If $p$ is an SOP solution with $p_0 \in K$, the Floquet multipliers of $p$ essentially coincide with the spectrum of the derivative of $P$ at $p_0$.)   See, for example, Skubachevskii and Walther \cite{Skubachevskii and Walther 2006} and the references therein.

\subsection*{Other results on nonuniqueness}

Peters \cite{Peters 1983} and Siegberg \cite{Siegberg 1984} have exhibited examples of Eq. \eqref{eq: proto} where $F$ is semiconjugate, on a suitable subset of $K$, to a chaotic interval map.  For such equations, of course, $P$ has many periodic points of many different periods.

Cao \cite{Cao 1995} and Vas \cite{Vas 2011} have both given examples of equations with many SOP solutions, many of which are stable.  In \cite{Cao 1995}, $f$ is monotonic, and the above-mentioned results of Kaplan and Yorke on the behavior of orbits in the $(x(t),x(t-1))$ plane are exploited to devise a condition that guarantees the existence of several (possibly infinitely many) SOP solutions whose planar orbits are nested within one another.  In \cite{Vas 2011} $f$ is unbounded and similar to a decreasing step function (the equation in \cite{Vas 2011} also has a damping term added).  The many intervals where $f$ is nearly flat causes the analog of $P$ to be contractive on many different subdomains of $C$, resulting in many distinct periodic solutions.  In Section \ref{SEC5} we present an example in a similar spirit, but the feedback function is bounded and the non-monotonicity of $f$ plays a key role.

\section{A class of equations with at least two SOP solutions}\label{SEC3}

\subsection*{Facts about the fixed point index}  We here recall only those properties of the fixed point index that we shall need and even those in a restricted setting. For a deeper discussion consult Granas and Dugundji \cite[Chapter IV]{Granas 2003} and also Nussbaum \cite[Section 1]{RDN 1974}.

Suppose that $X$ is a closed, bounded, convex subset of a Banach space.  $X$ has the subspace topology induced by the Banach space topology.  (This is the topology to which we shall be referring throughout this paragraph --- so the open sets we refer to just below are open relative to $X$, and may have empty interior with respect to the underlying Banach space.)  Suppose that $G: X \to X$ is continuous and compact in the sense that it maps bounded sets of $X$ into precompact sets.  Then, if $V$ is any open (relative to $X$) subset of $X$ such that $G$ is fixed-point free on $\del V := \overline{V} \setminus V$, the integer $i_X(G,V)$, \emph{the fixed point index of $G$ on $V$ with respect to $X$}, is defined.  The following two properties hold:
\emph{
\begin{itemize}
\item[(I)] If $V\subset X$ is convex and $G(\overline{V}) \subset V$, then $G$ has a fixed point in $V$ and $i_X(G,V) = 1$.  In particular, $i_X(G,X) = 1$.
\item[(II)] If $i_X(G,U)$ and $i_X(G,V)$ are defined, $\overline{U} \cap \overline{V} = \emptyset$, and the fixed point set of $G$ is contained in $U \cup V$, then
    \begin{equation*}\label{property II}
    1 = i_X(G,X) = i_X(G,U) + i_X(G,V).
    \end{equation*}
\end{itemize}}

Property (I) follows from the so-called normalization and homotopy properties of the index, and property (II) is the so-called additivity property of the index. We will also need the so-called mod $p$ theorem for the fixed point index.  The following is a very special case of this theorem where a general statement is contained, for instance, in Granas and Dugundji \cite[p. 460]{Granas 2003}:

\begin{lem}\label{lem: mod p theorem}
Suppose that $X$ is a closed, bounded, convex subset of a Banach space and that $G: X \to X$ is a continuous compact map.  Let $p \in \N$ be prime and write $Fix(G)$ and $Fix(G^p)$ for the fixed point sets of $G$ and $G^p$ in $X$, respectively.  Suppose that $V \subset X$ is relatively open and that $Fix(G) \cap \overline{V} = Fix(G^p) \cap \overline{V} = \{q\}$, $q \in V$.

Then $i_X(G,V)$ is congruent to $i_X(G^p,V)$ modulo $p$.
\end{lem}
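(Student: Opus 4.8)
The plan is to obtain the congruence from the general mod-$p$ theorem for the fixed point index, as stated in Granas and Dugundji \cite[p.~460]{Granas 2003}; our hypotheses localize all the relevant fixed-point behaviour at the single point $q\in V$, so the only real work is to check that the general theorem applies. First note that both indices are defined: since $Fix(G)\subseteq Fix(G^{p})$ and $Fix(G^{p})\cap\overline{V}=\{q\}\subseteq V$, neither $G$ nor $G^{p}$ has a fixed point on $\partial V=\overline{V}\setminus V$, so $i_{X}(G,V)$ and $i_{X}(G^{p},V)$ are both well defined. Moreover, the only $G$-periodic orbit of period dividing $p$ that meets $\overline{V}$ is the fixed point $\{q\}$, and it lies in $V$; hence whatever boundary- and orbit-conditions the general theorem imposes are met trivially, and it yields $i_{X}(G,V)\equiv i_{X}(G^{p},V)\pmod p$.

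To convey why the congruence holds, I would also sketch the following. Put $Y:=X^{p}$, a closed bounded convex subset of the product Banach space, and define the continuous compact map
\[
\Phi\colon Y\to Y,\qquad \Phi(x_{1},\dots,x_{p}):=(Gx_{2},Gx_{3},\dots,Gx_{p},Gx_{1}).
\]
Then $\Phi^{p}$ acts coordinatewise as $G^{p}$; the first-coordinate projection restricts to a bijection of $Fix(\Phi)$ onto $Fix(G^{p})$; and the diagonal $\Delta=\{(x,\dots,x):x\in X\}$ --- on which $\Phi$ restricts to a copy of $G$ --- corresponds under this bijection exactly to $Fix(G)$. Consequently $\bar{q}:=(q,\dots,q)$ is the only fixed point of $\Phi$ near $\Delta$. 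Two facts then give the result: (A) the reduction and commutativity properties of the index identify $i_{Y}(\Phi,\widetilde{V})$ with $i_{X}(G^{p},V)$ for a suitable open neighbourhood $\widetilde{V}$ of $\bar{q}$ in $Y$; and (B) $\Phi$ commutes with the cyclic shift $\sigma(x_{1},\dots,x_{p})=(x_{2},\dots,x_{p},x_{1})$, which generates a $\Z/p\Z$-action on $Y$ whose fixed-point set is precisely $\Delta$, so that $i_{Y}(\Phi,\widetilde{V})\equiv i_{\Delta}(\Phi|_{\Delta},\widetilde{V}\cap\Delta)=i_{X}(G,V)\pmod p$. Combining (A) and (B) yields $i_{X}(G^{p},V)\equiv i_{X}(G,V)\pmod p$.

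The genuine obstacle is step (B), which is the heart of the mod-$p$ theorem: it asserts that a $\Z/p\Z$-equivariant compact self-map has fixed point index congruent modulo the prime $p$ to the index of its restriction to the fixed-point set of the action. This is not a short computation. In the general case one shows that the fixed points lying off $\Delta$ split into free $\sigma$-orbits of size $p$, so their index contributions are divisible by $p$, and then handles the contribution along $\Delta$ separately --- and here, since $\bar{q}$ itself lies on $\Delta$, it is exactly this diagonal case that must be controlled. For that reason I would not reproduce it, but rather cite \cite{Granas 2003} (or Nussbaum \cite[Section~1]{RDN 1974}) for it; by contrast, step (A) and the verification of hypotheses in the first paragraph are routine.
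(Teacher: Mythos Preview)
Your proposal is correct and aligns with the paper's treatment: the paper does not prove this lemma at all but simply states it as a special case of the general mod-$p$ theorem and cites Granas and Dugundji \cite[p.~460]{Granas 2003}. Your first paragraph does exactly this, and your additional sketch of the $\Phi$-on-$X^{p}$ construction with the cyclic $\mathbb{Z}/p\mathbb{Z}$-action goes beyond what the paper offers, giving the reader some intuition for why the congruence holds while honestly flagging step (B) as the nontrivial core to be cited rather than reproved.
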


As a consequence of this result we obtain the next corollary, which will be essential for our approach.

\begin{cor}\label{cor: ind}
Suppose that $X$ is a closed, bounded, convex subset of a Banach space and that $G: X \to X$ is a continuous compact map.  Let $U$ be a relatively open convex subset of $X$.  Suppose that $q \in U$ is a fixed point of $G$ and that there is some $n \in \N$ such that, for all integers $m \geq n$,
\begin{equation*}
G^m(\overline{U}) \subset U\qquad\text{ and }\qquad Fix(G^m) \cap \overline{U} = \{q\}.
\end{equation*}
  Then $i_X(G,U) = 1$.
\end{cor}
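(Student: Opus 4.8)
The plan is to exploit the mod $p$ theorem (Lemma \ref{lem: mod p theorem}) together with property (I) of the index, using crucially that there are arbitrarily large primes. As a preliminary I would record the elementary fact that every iterate $G^m$ is again a continuous compact self-map of $X$: since $G$ carries the bounded set $X$ into a precompact (hence bounded) set, an easy induction shows that $G^m(B)$ is precompact whenever $B \subseteq X$ is bounded, and composition of continuous maps is continuous.

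Next, for every integer $m \geq n$ I would apply property (I) to the map $G^m$ and the convex set $U$. The hypothesis $G^m(\overline{U}) \subseteq U$ is precisely what property (I) demands (note that $\overline{U}$ is a closed convex subset of $X$, as $X$ is closed and $U$ is convex), so it yields $i_X(G^m, U) = 1$; in particular $G^m$ is fixed-point free on $\del U = \overline{U}\setminus U$, so this index is legitimately defined. At the same time I would observe that $Fix(G) \subseteq Fix(G^m)$, whence $Fix(G) \cap \overline{U} \subseteq Fix(G^m) \cap \overline{U} = \{q\}$ for $m \geq n$; since $q$ is itself a fixed point of $G$ lying in $U$, this gives $Fix(G) \cap \overline{U} = \{q\} \subseteq U$, so $G$ is fixed-point free on $\del U$ and $i_X(G, U)$ is defined as well.

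Now comes the crux. For each prime $p \geq n$ the hypotheses of Lemma \ref{lem: mod p theorem} hold with the relatively open set $V = U$ and the distinguished fixed point $q \in U$, because we have just shown $Fix(G) \cap \overline{U} = \{q\}$ and, as $p \geq n$, also $Fix(G^p) \cap \overline{U} = \{q\}$. The lemma therefore gives $i_X(G, U) \equiv i_X(G^p, U) \pmod{p}$, and combining this with $i_X(G^p, U) = 1$ from the previous step yields $i_X(G, U) \equiv 1 \pmod{p}$. Since this congruence holds for every prime $p \geq n$ and there are infinitely many such primes, the fixed integer $i_X(G, U) - 1$ is divisible by arbitrarily large primes, which forces $i_X(G, U) - 1 = 0$, i.e. $i_X(G, U) = 1$.

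The step I expect to be the genuine point of the argument — as opposed to routine verification — is the realization that a single application of the mod $p$ theorem is insufficient, since it only determines $i_X(G,U)$ modulo one prime; the fix is to run the argument for all large primes at once and invoke the fact that an integer divisible by infinitely many primes must vanish. Everything else (compactness of iterates, well-definedness of the various indices, and checking the hypotheses of property (I) and of the lemma) is immediate once the assumptions $G^m(\overline{U}) \subseteq U$ and $Fix(G^m) \cap \overline{U} = \{q\}$ are available for all large $m$.
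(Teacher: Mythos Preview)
Your proof is correct and follows essentially the same approach as the paper's own argument: use property~(I) to get $i_X(G^m,U)=1$ for all large $m$, verify that $Fix(G)\cap\overline{U}=\{q\}$ via the inclusion $Fix(G)\subset Fix(G^m)$, invoke Lemma~\ref{lem: mod p theorem} for all large primes $p$, and conclude that the integer $i_X(G,U)-1$ is divisible by arbitrarily large primes and hence zero. Your additional remarks on compactness of iterates and well-definedness of the indices are reasonable clarifications, and your phrasing of the final divisibility step is slightly slicker than the paper's explicit contradiction argument, but the substance is identical.
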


\begin{proof}
Since $Fix(G) \cap \overline{U} \subset Fix(G^m) \cap \overline{U}$ for all $m \geq n$ and $q \in Fix(G)$ by assumption, we actually have that $Fix(G) \cap \overline{U} = Fix(G^m) \cap \overline{U} = \{q\}$ for all such $m$.  By property (I) of the fixed point index given above, $i_X(G^m,U) = 1$ for all sufficiently large $m$, and in particular for all sufficiently large primes $p$.  Thus by Lemma \ref{lem: mod p theorem} $i := i_X(G,U)$ is congruent to $1$ mod $p$ for all sufficiently large primes $p$.  It follows that $i$ must equal $1$:  for let $p$ be any prime greater than $m$.  To say that $i \equiv 1$ mod $p$ is to say that $i = kp + 1$ for some integer $k$.  Now choose a prime $p' > |kp+1|+1$; if $k \neq 0$, then $kp+1$ cannot be congruent to $1$ mod $p'$.
\end{proof}

We are now in a position to discuss our first example of an instance of Eq. \eqref{eq: proto} with at least two distinct slowly oscillating periodic solutions. An important aspect of this example is the fact that the trivial solution $x(t)=0$, $t\in\R$, is locally asymptotically stable.  The result thus bears some intuitive resemblence to the $f'(0) > -\pi/2$ case of Theorem \ref{KYTHM} (and even more so the closely related Theorem 2.1 in \cite{RDN 1973}) in that there is a periodic solution that is in some sense ``between'' two stable sets.   Here are the main ideas, which can be applied under several sets of hypotheses other than those we formulate below.  We will apply the fixed point index to the continuous, compact map $P$ on the closed, bounded, convex set $K$.  We will exhibit two relatively open convex subsets $V$ and $U$ of $K$ such that $\overline{V} \cap \overline{U} = \emptyset$, $P(\overline{V}) \subset V$, and $U$ satisfies the hypotheses of Corollary \ref{cor: ind} ($U$ will be an open set containing $0$).  We will accordingly have $i_K(P,U) = i_K(P,V) = 1$; the contrapositive of (II) will then guarantee the existence of another nontrivial fixed point of $P$ in $K \setminus (U \cup V )$.  Since $P$ is fixed-point free on $\del V$ and $\del U$, this fixed point actually lies in the open set $K \setminus (\overline{U} \cup \overline{V} )$.

For the rest of this section we impose, in addition to \eqref{eq: assumptions}, the following additional restriction on the feedback function $f:\R\to\R$:
\begin{equation}\label{eq: assumptions 2}\tag{H'}
  \left\{\begin{array}{l}\mbox{$f$ is bounded by some $\mu>0$}; \\[0.2cm]
\mbox{There are constants $\beta,\sigma>0$ such that $|f(x)|\geq\sigma$ whenever $|x|\geq \beta$.}  \end{array} \right.
\end{equation}

Here is the main result of the section.

\begin{prop}\label{prop: example one}
Suppose that (H) and (H') hold, and moreover that
\begin{equation*}
-\frac{\pi}{2}<f'(0)<0\qquad\text{ and }\qquad
\beta < \frac{\sigma}{2 + \mu/\sigma}.
\end{equation*}
Then Eq. \eqref{eq: proto} has at least two distinct slowly oscillating periodic solutions.  Each of these solutions corresponds to a fixed point of $P$ (and so has minimal period given by the length of a minimal interval containing three consecutive zeros).
\end{prop}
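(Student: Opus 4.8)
The plan is to follow the strategy the authors outline immediately before the statement: we will exhibit two disjoint relatively open convex subsets $U, V \subset K$, show $i_K(P,U) = i_K(P,V) = 1$, and then invoke the contrapositive of property (II) of the fixed point index to produce a nontrivial fixed point of $P$ in $K \setminus (\overline{U} \cup \overline{V})$. The set $U$ will be a small neighborhood of $0$ in $K$; since $-\pi/2 < f'(0) < 0$, the origin is locally asymptotically stable, so for a suitably small $\delta > 0$ the set $U := \{\varphi \in K : \|\varphi\| < \delta\}$ should satisfy $P^m(\overline{U}) \subset U$ for all large $m$ and $Fix(P^m) \cap \overline{U} = \{0\}$ for all large $m$ — this is exactly the hypothesis of Corollary~\ref{cor: ind}, yielding $i_K(P,U) = 1$. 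Making this rigorous requires translating the local attractivity of $0$ for the semiflow $F$ into a statement about iterates of the return map $P$; one must check that the time-one-step return map does not ``escape'' a small ball before the segment is caught again, using the continuity of $P$ (Proposition~\ref{prop: return map}) together with Proposition~\ref{prop: basic 2}, which controls the size of $x^\varphi$ on the interval following a zero in terms of $\max\{|f(\xi)| : \xi \in x([z-1,z])\}$.

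The construction of $V$ is where the quantitative hypothesis $\beta < \sigma/(2 + \mu/\sigma)$ enters, and this is the heart of the argument. The idea is to build a convex set $V$ of initial data that are ``large enough'' — bounded below in an appropriate pointwise or integral sense — that the negative feedback, reinforced by the lower bound $|f(x)| \geq \sigma$ for $|x| \geq \beta$, forces $x^\varphi$ to grow past the threshold $\beta$ on each half-period and hence to have $P(\varphi)$ again in $V$. Concretely, I expect $V$ to be something like $\{\varphi \in K : \varphi(0) \geq c\}$ (or with a similar lower bound on an appropriate functional) for a constant $c$ chosen in the window the inequality $\beta < \sigma/(2+\mu/\sigma)$ opens up. One estimates: starting from such a $\varphi$, while $|x^\varphi| \leq \beta$ the derivative has magnitude at most $\mu$, so it takes at least time $\approx \beta/\mu$ (but at most $1$) to drive $x$ back down through a zero; once $|x^\varphi(t-1)| \geq \beta$ on a unit interval, $|x^{\varphi\prime}| \geq \sigma$ there, so $x^\varphi$ moves by at least $\sigma$; combining these with the slow-oscillation structure and the bound $\|P(\varphi)\| \leq \mu$ gives $P(\varphi)(0) \geq c$ again, i.e. $P(\overline V) \subset V$. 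Property (I) of the index then gives $i_K(P,V) = 1$. The constant $2 + \mu/\sigma$ presumably arises from bookkeeping two half-steps (the rise and the fall) plus the ratio $\mu/\sigma$ measuring how much ground is lost while $|x| < \beta$; keeping the arithmetic so that the resulting lower bound $c$ exceeds $\beta$ (so that $V$ is genuinely in the ``large'' regime and disjoint from $U$) is exactly what the hypothesis guarantees.

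With both index computations in hand, disjointness $\overline U \cap \overline V = \emptyset$ is immediate (choose $\delta < c$), and property (II) reads $1 = i_K(P,K) = i_K(P,U) + i_K(P,V) + i_K(P, W)$ for any open $W$ capturing the remaining fixed points — more precisely, since $1 \neq 1 + 1$, the fixed-point set of $P$ cannot be contained in $U \cup V$, so there is a fixed point $\psi \in K \setminus (U \cup V)$; as $P$ is fixed-point free on $\partial U$ and $\partial V$ (the index being defined there), in fact $\psi \in K \setminus (\overline U \cup \overline V)$. This $\psi \neq 0$ and $\psi \neq$ (the fixed point in $V$, call it $\varphi_*$, which exists by property (I)) give two distinct nonzero fixed points of $P$; by the discussion preceding the proposition, each corresponds to an SOP solution of Eq.~\eqref{eq: proto} whose minimal period is the length of an interval containing three consecutive zeros. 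The distinctness of the two resulting periodic solutions follows since their initial segments lie in the disjoint sets $V$ and $K \setminus \overline V$ respectively.

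The main obstacle I anticipate is the verification $P(\overline V) \subset V$: it demands a careful, fully quantitative tracking of a slowly oscillating solution across a half-period — an upper estimate on the time spent in the ``small'' zone $|x| \leq \beta$ (where only the crude bound $|f| \leq \mu$ is available), a lower estimate on the excursion in the ``large'' zone $|x| \geq \beta$ (using $|f| \geq \sigma$), and a matching between the value $x^\varphi$ attains at the relevant zero-plus-one time and the defining inequality of $V$ — all arranged so that the threshold constant comes out on the correct side of $\beta$ under the hypothesis $\beta < \sigma/(2 + \mu/\sigma)$. The index-theoretic scaffolding, by contrast, is essentially mechanical given Corollary~\ref{cor: ind} and properties (I)–(II).
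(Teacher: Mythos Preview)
Your plan matches the paper's proof almost exactly: the proposition is deduced from two lemmas, one constructing $V$ with $P(\overline V)\subset V$ and one constructing $U$ about $0$ satisfying the hypotheses of Corollary~\ref{cor: ind}, followed by the index argument you spell out. Your treatment of $U$ (a small $\|\cdot\|$-ball, using local exponential stability and the fact that the $2n$th zero of $x^\varphi$ occurs after time $2n-1$) and of the concluding additivity step is essentially what the paper does.

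The one place where the paper is sharper than your sketch is the shape of $V$. A condition only on $\varphi(0)$ will not do: a $\varphi\in K$ with $\varphi(0)$ large may still satisfy $\varphi<\beta$ on almost all of $[-1,0]$ (rising only near $0$), and then $x'(t)=f(\varphi(t-1))$ need not have $|x'|\geq\sigma$ on any definite subinterval of $[0,1]$, so the control you need to push $x$ through zero with speed $\geq\sigma$ is lost. The paper instead defines
\[
V=\bigl\{\varphi\in K:\ \varphi(t)>\sigma t+\sigma(1-\gamma)\ \text{for}\ t\in[-1+\gamma,-\beta/\sigma]\bigr\}
\]
for a small auxiliary $\gamma>0$; this linear lower bound on an \emph{interval} forces $\varphi>\beta$ on a subinterval of $[-1,0]$ of definite length $1-\gamma-\beta/\sigma$, which is precisely what propagates the estimate $x'\leq-\sigma$ past the first zero and yields $-x_{z_1+1}(s)\geq\sigma s+\sigma>\sigma s+\sigma(1-\gamma)$, i.e.\ $x_{z_1+1}\in -V$; a symmetric pass (the hypotheses \eqref{eq: assumptions 2} are symmetric even though $f$ need not be odd) then gives $P(\varphi)=x_{z_2+1}\in V$. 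The hypothesis $\beta<\sigma/(2+\mu/\sigma)$ enters exactly as you guess, via the inequality $\sigma(1-\gamma)-\beta-\mu(\gamma+\beta/\sigma)\geq\beta$, which guarantees $x$ stays above $\beta$ long enough to acquire a full unit interval in the region $|x|\geq\beta$. So your hedged ``appropriate functional'' must encode a pointwise lower bound on an interval, not at a single point; once you make that correction, your outline is the paper's proof.
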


The first step in the proof of Proposition \ref{prop: example one} is to exhibit the subset $V \subset K$ described above.

\begin{lem}\label{prop: V}
Suppose that $(H)$ and $(H')$ hold, and that
\[
\beta < \frac{\sigma}{2 + \mu/\sigma}.
\]
Then there is a relatively open convex subset $V \subset K$ such $0 \notin \overline{V}$ and such that $P(\overline{V}) \subset V$.  Accordingly, $i_K(P,V) = 1$ and $P$ has a fixed point in $V$; this fixed point is a segment of a slowly oscillating periodic solution of Eq. \eqref{eq: proto}.
\end{lem}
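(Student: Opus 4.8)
The plan is to build the set $V$ concretely out of segments that can arise as $P(\psi)$ for $\psi\in K$, and to show that the smallness condition on $\beta$ forces every such image to have a definite ``size'' (a uniform lower bound on the sup-norm, coming from the second clause of (H')) while the boundedness clause of (H') gives a uniform upper bound. First I would fix $\varphi\in K$, $\varphi\neq 0$, and recall from Proposition \ref{prop: basic 2} that $x=x^{\varphi}$ is slowly oscillating, so that if $P(\varphi)\neq 0$ then $P(\varphi)=x_{z_2+1}$ where $0<z_1<z_2$ are the first two positive zeros of $x$, $P(\varphi)(-1)=x(z_2)=0$, $P(\varphi)$ is nondecreasing, and $\|P(\varphi)\|=x(z_2+1)=|x|$ evaluated at the right endpoint of an interval on which $x$ is monotone. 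The key quantitative point is: on $[z_1,z_1+1]$ we have $x'(t)=f(x(t-1))$ with $x(t-1)\le 0$ (since $t-1\in[z_1-1,z_1]$ and $x$ is, by slow oscillation and monotonicity, nonpositive there once $\varphi\ge 0$ — I would track signs carefully from Proposition \ref{prop: basic 2}), hence $x'\ge 0$ there; I want to show $x$ climbs past the level $\beta$ on $[z_1,z_1+1]$, because then on the following interval the feedback term has magnitude at least $\sigma$ for a full unit of time, which forces $|x(z_2+1)|\ge$ something like $\sigma$ minus small corrections. Conversely $|x|$ cannot exceed $\mu$ on any such interval by the bound in (H').

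Concretely I would argue: if at some point in $[z_1-1,z_1]$ (equivalently, a previous lap) the solution reaches height $\ge\beta$, the feedback on the next unit interval has size $\ge\sigma$, so over that unit interval $x$ changes by at least $\sigma$ in magnitude, which — net of the amount ``wasted'' while $|x|<\beta$, bounded crudely by $2\beta$ using the $\mu$-bound and the time $\le 2\beta/\sigma$ spent below level $\beta$ — leaves $\|P(\varphi)\|\ge \sigma-2\beta-(\mu/\sigma)\beta$; here is exactly where $\beta<\sigma/(2+\mu/\sigma)$ enters, making this quantity a positive constant, call it $c_0>0$. The same estimate applied one lap further guarantees $P(\varphi)$ itself reaches height $\ge\beta$, so the mechanism is self-perpetuating: the set
\[
V:=\{\ \varphi\in K\ \mid\ c_0<\|\varphi\|,\ \ \varphi(0)>\beta\ \text{(say)},\ \text{and a little extra slack}\ \}
\]
(the precise inequalities to be pinned down so that $V$ is relatively open, convex, bounded away from $0$, and mapped into itself) will satisfy $P(\overline V)\subset V$. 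Convexity of $V$ is immediate since $K$ is convex and the defining conditions are strict linear/sup-norm inequalities; $0\notin\overline V$ because $\|\varphi\|\ge c_0>0$ on $V$; relative openness likewise from strictness.

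Once $V$ is in hand with $P(\overline V)\subset V$, property (I) of the fixed point index gives immediately that $P$ has a fixed point $\varphi^{*}\in V$ and $i_K(P,V)=1$. Since $\varphi^{*}\neq 0$, the discussion preceding Proposition \ref{prop: return map} (a nonzero fixed point of $P$ is the segment of an SOP solution, of minimal period equal to the length of an interval containing three consecutive zeros) finishes the statement. The main obstacle I anticipate is purely bookkeeping but genuinely delicate: tracking the signs and the precise zeros ($z_1$ versus $z_2$, which lap the ``height $\ge\beta$'' event occurs on, and how much of a unit interval is genuinely spent with $|x|\ge\beta$) so that the crude lower bound $\sigma-2\beta-(\mu/\sigma)\beta$ is actually valid with the correct constants, and then choosing the defining inequalities of $V$ so that the self-mapping $P(\overline V)\subset V$ holds on the closure and not merely on $V$ — i.e. making sure the ``slack'' is propagated, not consumed, by one application of $P$. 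Everything else (openness, convexity, $0\notin\overline V$, invoking (I)) is routine.
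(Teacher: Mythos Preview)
Your overall structure is right --- find a relatively open convex $V\subset K$ bounded away from $0$ with $P(\overline V)\subset V$, then invoke property (I) --- and this is exactly what the paper does. But your proposed $V$, defined only by conditions of the type $\|\varphi\|>c_0$ and $\varphi(0)>\beta$, is too coarse, and the gap is not ``purely bookkeeping.''

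The step that fails is your claim that ``the time $\le 2\beta/\sigma$ spent below level $\beta$.'' This would follow if you knew $|x'|\ge\sigma$ while $|x|<\beta$; but $|x'(t)|=|f(x(t-1))|\ge\sigma$ only when $|x(t-1)|\ge\beta$, which requires control on the \emph{previous} lap --- exactly what you are trying to prove. With only the endpoint condition $\varphi(0)>\beta$, the function $\varphi$ could sit below level $\beta$ for almost all of $[-1,0]$, so on the next unit interval $f(x(t-1))$ is only known to lie in $[-\mu,0]$, and your estimate $\|P(\varphi)\|\ge \sigma-2\beta-(\mu/\sigma)\beta$ does not follow. The circularity is real, not cosmetic.

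The paper breaks this circularity by building shape information directly into $V$: it takes
\[
V=\{\varphi\in K:\ \varphi(t)>\sigma t+\sigma(1-\gamma)\ \text{for }t\in[-1+\gamma,-\beta/\sigma]\}
\]
for a small $\gamma>0$ chosen so that $\beta\le(\sigma-(\sigma+\mu)\gamma)/(2+\mu/\sigma)$. The point is that $\varphi\in\overline V$ forces $\varphi\ge\beta$ on an interval of length $1-\gamma-\beta/\sigma$, and then a short argument using only the $\mu$-bound shows $x(t)\ge\beta$ on a full unit interval ending at the first time $\tau$ with $x(\tau)=\beta$. From that point one genuinely has $x'\le-\sigma$ on $[\tau,\tau+1]$, so $z_1\in(\tau,\tau+\beta/\sigma]$ and $-x_{z_1+1}(s)\ge\sigma s+\sigma>\sigma s+\sigma(1-\gamma)$ on $[-1,-\beta/\sigma]$; thus $-x_{z_1+1}\in V$ strictly, and by symmetry of (H$'$) one gets $x_{z_2+1}\in V$. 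The slack $\gamma$ is what turns the closure into the open set. To make your approach work you would need to encode a comparable slope/shape condition into $V$, which is essentially the paper's construction.
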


\begin{proof}
Note that our conditions on $\beta$ guarantee that $\beta/\sigma < 1/2$.  Choose $\gamma > 0$ small enough that $\gamma + \beta/\sigma < 1$ and
\begin{equation}\label{eq: aux-ineq}
\beta \leq \frac{\sigma - (\sigma + \mu)\gamma}{2 + \mu/\sigma}.
\end{equation}
Now define the following set:
\[
V := \left\lbrace\varphi \in K \mid \ \varphi(t) > \sigma t + \sigma(1-\gamma) \ \mbox{for} \ t \in [-1+\gamma,-\beta/\sigma] \ \right\rbrace.
\]
This set is clearly open and convex, and its closure is
\[
\overline{V} = \left\lbrace\varphi \in K \mid \ \varphi(t) \geq \sigma t + \sigma(1-\gamma) \ \mbox{for} \ t\in [-1+\gamma,-\beta/\sigma] \ \right\rbrace.
\]
The members of $\overline{V}$ are all the members of $K$ whose graphs do not go below the graph pictured in Fig. \ref{fig: definition of V}.

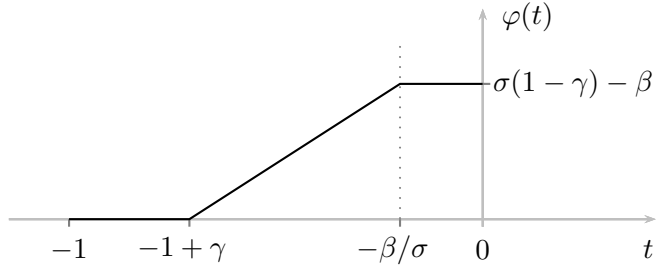
\begin{figure}[htb]
\centering
\psset{unit=1.cm}
\begin{pspicture}(-6.5,-1)(2.5,3.4)
\psaxes[labels=none,linecolor=lightgray,ticks=none]{->}(0,0)(-6.3,-0.1)(2.3,2.8)
\psline[linecolor=gray](-5.5,0)(-5.5,-0.1)
\psline[linecolor=gray](-3.9,0)(-3.9,-0.1)
\psline[linestyle=solid](-5.5,0)(-3.9,0)(-1.1,1.8)(0,1.8)
\psline[linecolor=gray](-1.1,0)(-1.1,-0.1)
\psline[linecolor=gray](0,1.8)(0.1,1.8)
\psline[linecolor=gray, linestyle=dotted](-1.1,0)(-1.1,2.3)
\rput(1.2,1.8){$\sigma(1-\gamma)-\beta$}
\rput(-1.2,-0.4){$-\beta/\sigma$}
\rput(-5.5,-0.4){$-1$}
\rput(-4,-0.4){$-1+\gamma$}
\rput(0,-0.4){$0$}
\rput(0.6,2.7){$\varphi(t)$}
\rput(2.2,-0.4){$t$}
\end{pspicture}
\caption{Definition of $\overline{V}$}\label{fig: definition of V}
\end{figure}

Suppose now that $x:[-1,\infty)\to\R$ is a solution of Eq. \eqref{eq: proto} with $x_0 \in \overline{V}$.  Notice that $x(0) \geq \sigma(1 - \gamma) - \beta$, and that this latter quantity is greater than $\beta$ by assumption \eqref{eq: aux-ineq}. Moreover, $x(t) \geq \beta$ for all $t \in [-1+(\gamma + \beta/\sigma),0]$ since $x_{0}\in\overline{V}$.  We claim that in fact $x(t) \geq \beta$ for all $t \in [0,\gamma+\beta/\sigma]$ as well.  For by the first point of $(H')$, we have $x'(t) \geq -\mu$ for all $t \in [0,\gamma + \beta/\sigma]$, and so accordingly we have
\[
x(\gamma + \beta/\sigma) \geq \sigma(1 - \gamma) - \beta - \mu(\gamma + \beta/\sigma);
\]
our assumption \eqref{eq: aux-ineq} on $\beta$ guarantees that the quantity on the right is greater than or equal to $\beta$.  Therefore, at the first positive time $\tau$ such that $x(\tau) = \beta$, we see that $x(t) \geq \beta$ on the entire interval $[\tau- 1,\tau]$.

Hypothesis \eqref{eq: assumptions 2} and the negative feedback condition now yield that $x'(t) \leq -\sigma$ for all $t \in [\tau,\tau+1]$.  Thus $z_1$, the first positive zero of $x$, occurs in the interval $(\tau,\tau+\beta/\sigma]$.  We therefore see that $x'(t) \leq -\sigma$ on the interval
\[
[z_1,\tau+1] \supset [z_1,z_1+1-\beta/\sigma]
\]
and that $x'(t) \leq 0$ on $[\tau+1,z_1+1]$.  It follows that
\[
-x_{z_1+1}(s) \geq \sigma s+\sigma > \sigma s + \sigma(1-\gamma) \ \mbox{for all} \ s \in [-1,-\beta/\sigma]
\]
and so, in particular, $x_{z_1 + 1} \in -V$.

By taking into account that the assumptions \eqref{eq: assumptions 2} are symmetric, even though $f$ is not assumed odd, a symmetric argument shows that $x_{z_2 + 1} \in V$, which is the desired conclusion.
\end{proof}

\begin{remark}
	Let $p:\R\to\R$ denote the slowly oscillating periodic solution found in the last result, translated so that $p_0 \in V \subset K$. By imposing further conditions on $f(x)$ for $|x| \geq \beta$, one can guarantee that $p_0 \in V$ is a stable fixed point of $P$. One particularly simple such condition is to insist that $f$ be constant on $[\beta,\infty)$.  In this case, the proof of Lemma \ref{prop: V} shows that $P$ is in fact constant on $\overline{V}$ since, in the notation of the above proof, $x_\tau$ can only be continued in one possible way.  More generally, by insisting that $f$ be Lipschitz and have a small enough Lipschitz constant on $[\beta,\infty)$, it is possible to show, in the spirit of Walther \cite{Walther 2001}, that $P$ is contractive on the subset $\overline{V}$ of $K$.
\end{remark}

We now turn our attention to our desired open convex subset $U \subset K$ about $0\in K$.  The following lemma follows from standard results about the behavior of solutions of Eq. \eqref{eq: proto} near the trivial solution $x(t)=0$, $t\in\R$, when the latter is locally asymptotically stable by the so-called principle of linearized stability.

\begin{lem}\label{prop: U}
Suppose that $(H)$ holds and that $f'(0) \in (-\pi/2,0)$.  Then there is a an open convex subset $U$ of $K$ such that $0 \in U$ and such that the hypotheses of Corollary \ref{cor: ind} are satisfied. More precisely, there is an open convex set $U\subset K$ with $0\in U$ and some $n \in \N$ such that, for all $m \geq n$,
\begin{equation*}
P^m(\overline{U}) \subset U\qquad\text{ and }\qquad Fix(P) \cap \overline{U} = Fix(P^m) \cap \overline{U} = 0.
\end{equation*}
Thus $i_{K}(P,U) = 1$.

Moreover, $U\subset K$ can be chosen so that $\overline{U} \cap \overline{V} = \emptyset$ where $V$ is as in Proposition \ref{prop: V}.
\end{lem}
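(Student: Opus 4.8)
The plan is to take $U$ to be a small norm-ball about $0$ intersected with $K$, and to read everything off from the local asymptotic stability of the trivial solution together with the compactness of $P$ (Proposition \ref{prop: return map}).

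\emph{Setup.} Since $f'(0)\in(-\pi/2,0)$, the principle of linearized stability makes the stationary point $0$ of the semiflow $F$ locally asymptotically stable; let $\mathcal B\subset C$ be its basin of attraction, which is open. Writing $B_r:=\{\varphi\in C\mid\|\varphi\|<r\}$, I would first fix $\rho>0$ with $\overline{B_\rho}\subset\mathcal B$, and then choose $r\in(0,\rho)$ small enough that (i) $r<\mathrm{dist}(0,\overline V)$ --- possible because $0\notin\overline V$ and $\overline V$ is closed, by Lemma \ref{prop: V} --- and (ii) $\|x^\varphi_t\|\le\rho$ for all $t\ge0$ whenever $\|\varphi\|\le r$, which is possible by (Lyapunov) stability. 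Put $U:=B_r\cap K$. This is relatively open and convex with $0\in U$, and since $K$ is convex and contains $0$ the curve $\lambda\mapsto\lambda\varphi$, $\lambda\in[0,1)$, shows that $\overline U=\{\varphi\in K\mid\|\varphi\|\le r\}$. Choice (i) then gives $\overline U\cap\overline V=\emptyset$ at once.

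\emph{The iteration estimate.} By (ii), every solution with initial value in $\overline U$ stays in $\overline{B_\rho}$, so $P(\overline U)$ is contained in $\mathcal C:=\overline{B_\rho}\cap\overline{P(K)}$; by Proposition \ref{prop: return map} this set $\mathcal C$ is a closed --- hence compact --- subset of $\overline{P(K)}$, and it lies in $\overline{B_\rho}\subset\mathcal B$. On a compact subset of the basin of an asymptotically stable equilibrium the convergence to $0$ is uniform: a routine covering argument, using the continuity of $F$ on bounded time intervals stated in Section~1, yields for every $\varepsilon>0$ a time $T(\varepsilon)$ with $\|F(t,\psi)\|<\varepsilon$ for all $\psi\in\mathcal C$ and $t\ge T(\varepsilon)$. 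Now, for $\psi\in\mathcal C$ and $k\in\N$, the iterate $P^k(\psi)$ is either $0$ or equals $x^\psi$ evaluated at the time $z_{2k}(\psi)+1>2k$ (Proposition \ref{prop: basic 2}, since consecutive positive zeros of $x^\psi$ differ by more than one and $z_1>0$). Hence, for $\varphi\in\overline U$ and $m\ge2$ with $2(m-1)\ge T(\varepsilon)$, we get $\|P^m(\varphi)\|=\|P^{m-1}(P(\varphi))\|<\varepsilon$. Taking $\varepsilon=r$ produces $n\in\N$ with $P^m(\overline U)\subset U$ for every $m\ge n$.

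\emph{Fixed points and conclusion.} Since $\varepsilon$ above is arbitrary, in fact $P^m(\varphi)\to0$ for every $\varphi\in\overline U$; so if $\varphi\in\overline U$ satisfies $P^m(\varphi)=\varphi$ for some $m\ge1$, then $\varphi=P^{km}(\varphi)\to0$, whence $\varphi=0$. As $0$ is a fixed point of each $P^m$ and lies in $\overline U$, we get $Fix(P)\cap\overline U=Fix(P^m)\cap\overline U=\{0\}$ for all $m$ (in particular $P$ is fixed-point free on $\partial U$, so $i_K(P,U)$ is defined). Thus $U$, with $q=0$, satisfies all hypotheses of Corollary \ref{cor: ind}, giving $i_K(P,U)=1$, and we already have $\overline U\cap\overline V=\emptyset$; this proves the lemma. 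The one place requiring care is the uniform-attraction step: local asymptotic stability only gives $x^\varphi_t\to0$ pointwise in $\varphi$, and $\overline U$ itself is not compact ($K$ contains sequences of monotone functions with no uniformly convergent subsequence), so the uniformity must be extracted on $\overline{P(\overline U)}$ --- compact by the smoothing in Proposition \ref{prop: return map} --- after arranging via (ii) that this image sits inside a fixed compact-closure piece of $\mathcal B$; the transfer from the semiflow to the iterates of $P$ is then automatic because each application of $P$ advances time by more than $2$. The remaining points --- convexity and relative openness of $U$, the identification of $\overline U$, disjointness from $\overline V$, and the fixed-point bookkeeping --- are routine.
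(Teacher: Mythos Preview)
Your argument is correct and the choice of $U$ is the same as the paper's, but the way you obtain the uniform smallness of $P^m$ on $\overline U$ is genuinely different. The paper invokes local \emph{exponential} stability: under $f'(0)\in(-\pi/2,0)$ the linearization has spectrum in the open left half-plane, so there are $\epsilon,\kappa>0$ and $\omega<0$ with $\|F(t,\varphi)\|\le\kappa e^{\omega t}$ whenever $\|\varphi\|\le\epsilon$. With $U=\{\varphi\in K:\|\varphi\|<\epsilon\}$ one simply picks $T$ so that $\kappa e^{\omega T}<\epsilon$ and $n>T/2$; then the $2n$th zero of $x^\varphi$ lies past $T-1$ and $\|P^n(\varphi)\|<\epsilon$ directly, with no compactness argument needed. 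Your route uses only \emph{asymptotic} stability, which by itself gives pointwise but not uniform decay on the (non-compact) set $\overline U$; you recover uniformity by first pushing $\overline U$ through $P$ into the compact set $\overline{P(K)}\cap\overline{B_\rho}$ (Proposition~\ref{prop: return map}) and then appealing to the standard fact that attraction to an asymptotically stable equilibrium is uniform on compact subsets of its basin. The paper's approach is shorter because it exploits the stronger stability that actually holds here; yours is a little more robust, since it would go through even if only asymptotic (not exponential) stability were available, at the cost of the extra compactness/covering step you flag at the end.
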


\begin{proof}
Since $f'(0) \in (-\pi/2,0)$, $0\in C$ is a locally exponentially stable stationary point of the semiflow $F$.  Thus there are numbers $\epsilon > 0$, $\kappa > 0$, and $\omega < 0$ such that $\|F(t,\varphi)\| \leq \kappa e^{\omega t}$ whenever $\|\varphi\| \leq \epsilon$ (see, for instance, \cite[Chapter VII]{Diekmann}). Let us set
\[
U := \{ \varphi \in K \mid \ \|\varphi\| < \epsilon \ \}.
\]
Notice that there is no loss of generality in taking $\epsilon>0$ small enough that $\overline{U} \cap \overline{V} = \emptyset$ where $V$ is as in Lemma \ref{prop: V}.  Given $\varphi \in \overline{U}$, $x^\varphi(t) \to 0$ as $t \to \infty$; it is therefore clear that no power of $P$ has any fixed points on $\overline{U}$ other than $0\in K$.

Now choose $T > 0$ so large that $\kappa e^{\omega T} < \epsilon$, and let $n$ be an integer larger than $T/2$.  We claim that for any $\varphi \in \overline{U}$ we have $\|P^n(\varphi)\| < \epsilon$. In the situation $x^\varphi$ has fewer than $2n$ positive zeros, $P^n(\varphi) = 0$ by definition. Otherwise, the slowly oscillating behavior of $x^{\varphi}$ implies that the $2n$th positive zero $z$ of $x^\varphi$ occurs after time $2n - 1 > T - 1$ and thus
\[
\|P^n(\varphi)\| = \|x^\varphi_{z+1}\| = \| F(z+1,\varphi)\| \leq \kappa e^{\omega(z+1)}< \kappa e^{\omega T} < \epsilon.
\]
This completes the proof.
\end{proof}

Proposition \ref{prop: example one} now follows from Lemmas \ref{prop: V}, \ref{prop: U}, and the discussion at the beginning of the section.

\begin{remark}
Under the stronger assumption that $0 > f'(0) > -1$, it is possible to use much more elementary arguments to exhibit an open neighborhood $U$ in $K$ about $0$ such that $P(\overline{U}) \subset U$ and $\overline{U} \cap \overline{V} = \emptyset$.  In this case, we still have $i_{K}(P,U) = 1$ by property (I) of the index above, and the rest of the proof of Proposition \ref{prop: example one} is the same as before.  Here is a sketch of this simpler approach.

Since $f$ is $C^1$ on a neighborhood of $0$ and $|f'(0)| < 1$ there is an $\epsilon > 0$ such that $0 < |f'(\xi)| < 1$ for all $\xi \in [-\epsilon,\epsilon]$.  Set
\[
U := \{ \varphi \in K \mid \ \|\varphi\| < \epsilon \ \}.
\]
By shrinking the constant $\epsilon$ if necessary, we can guarantee that $\overline{U}$ and $\overline{V}$ are disjoint.

Now let $\varphi \in \overline{U}$ be given with $\varphi \neq 0$.  We claim that $\|P(\varphi)\| < \|\varphi\|$.  If $J(\varphi)  < 2$ with $J(\phi)$ introduced in Proposition \ref{prop: basic 2}, $P(\varphi) = 0$ and the claim is immediate.  Suppose that $J(\varphi) \geq 2$ and write $z_1 = z_1(\varphi)$, $z_2 = z_2(\varphi)$, and $x = x^\varphi$  Since $x(0) = \|\varphi\|$ and $x$ is decreasing on $[0,z_1]$, we have that $\|x_{z_1}\| \leq \|\varphi\|$.  For all $s \in [-1,0]$, since $|f'(\xi)| < 1$ for $\xi \in [-\epsilon,\epsilon]$ and $f(0)=0$ we have
\[
|f(x_{z_1}(s))| < |x_{z_1}(s)| \leq \|\varphi\|.
\]
Therefore it follows $0 > x'(t) > -\|\varphi\|$ for all $t \in (z_1,z_1+1)$, and $\|x_{z_1 + 1}\| < \|\varphi\|$.  A symmetric argument shows that $\|P(\varphi)\| < \|x_{z_1 + 1}\| < \|\varphi\|$.
\end{remark}

\section{Solutions of long period}

The interest of our next example for Eq. \eqref{eq: proto} with multiple slowly oscillating periodic solutions lies in the fact that we obtain solutions of ``long period" even though $f$ can be taken bounded by some fixed number: the non-monotonicity of $f$ is key.  Furthermore, in this example we will discuss the existence of multiple slowly oscillating periodic solutions for Eq. \eqref{eq: proto} both in the case where the zero solution is stable and in the case where it is unstable.

In order to specify the feedback function $f:\R\to\R$ in detail, we need positive reals $a,c,\delta,\gamma$ satisfying the following conditions:
\begin{itemize}
\item[$(i)$] $c < \min(a,\delta)$;\smallskip
\item[$(ii)$] $\gamma \geq 4a$ and $\gamma>\delta$;\smallskip
\item[$(iii)$] $\delta + \frac{c}{\delta} \gamma \leq a$.
\end{itemize}

The following result is obvious and shows  that these conditions are not vacuous.

\begin{prop}
Given any $a>0$ and any $\gamma > 4a$, there is some $\delta_* > 0$ such that, for any $\delta \in (0,\delta_*)$, there is a real $c_*(\delta) > 0$ such that all of conditions (i)--(iii) are satisfied whenever $c \in (0,c_*(\delta))$.
\end{prop}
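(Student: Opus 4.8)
The statement to prove is really just an unwinding of the three inequality conditions $(i)$–$(iii)$, so the proof is a direct verification: given $a>0$ and $\gamma>4a$ we must produce $\delta_*>0$ and, for each $\delta\in(0,\delta_*)$, a threshold $c_*(\delta)>0$ so that $(i)$, $(ii)$, $(iii)$ all hold for every $c\in(0,c_*(\delta))$. I would begin by isolating which conditions involve $c$ and which do not. Condition $(ii)$, $\gamma\geq 4a$ and $\gamma>\delta$, involves neither a subtle interplay nor $c$: the first half is exactly the standing hypothesis $\gamma>4a$, and the second half, $\gamma>\delta$, is guaranteed as soon as $\delta_*\leq\gamma$. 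Condition $(iii)$, $\delta+\frac{c}{\delta}\gamma\leq a$, is the one coupling all four parameters; condition $(i)$, $c<\min(a,\delta)$, is a pure smallness requirement on $c$ relative to the already-chosen $a$ and $\delta$.

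The key step is to choose $\delta_*$ so that the ``$\delta$-only'' part of $(iii)$ leaves room for a positive $c$-contribution. Since $\gamma>4a>0$ we may simply set $\delta_*:=\min\{a/2,\gamma\}$. Then for any $\delta\in(0,\delta_*)$ we have $\delta<a/2$, so $a-\delta>a/2>0$, and we also have $\delta<\gamma$, which already secures $(ii)$. Now fix such a $\delta$. Condition $(iii)$ rearranges to $c\leq\frac{\delta(a-\delta)}{\gamma}$, and the right-hand side is strictly positive. So define
\[
c_*(\delta):=\min\left\{\,\frac{\delta(a-\delta)}{\gamma},\ \delta,\ a\,\right\},
\]
which is a positive real. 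For any $c\in(0,c_*(\delta))$ we then have $c<\delta$ and $c<a$, hence $c<\min(a,\delta)$, giving $(i)$; and $c<\frac{\delta(a-\delta)}{\gamma}$, i.e. $\frac{c}{\delta}\gamma<a-\delta$, i.e. $\delta+\frac{c}{\delta}\gamma<a$, giving $(iii)$. Together with $(ii)$ already established, all three conditions hold.

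There is essentially no obstacle here — the proposition is flagged as ``obvious'' in the excerpt, and the only thing to be careful about is the order of quantifier choices: $\delta_*$ must be pinned down using only $a$ and $\gamma$ (so that it works uniformly in $\delta$), and then $c_*$ is allowed to depend on the chosen $\delta$. A minor cosmetic point is that $(iii)$ is stated with a non-strict $\leq$, so one could equally take $c\in(0,c_*(\delta)]$ for the part of the argument touching $(iii)$; I would just use the strict inequality throughout for uniformity with $(i)$, which costs nothing. The whole argument is two or three lines once the explicit formulas for $\delta_*$ and $c_*(\delta)$ are written down.
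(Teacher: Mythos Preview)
Your proof is correct; the paper itself gives no proof and simply declares the result ``obvious,'' so your direct verification with the explicit choices $\delta_*=\min\{a/2,\gamma\}$ and $c_*(\delta)=\min\{\delta(a-\delta)/\gamma,\,\delta,\,a\}$ is exactly what is intended. One cosmetic remark: since $\gamma>4a>a/2$, the $\min$ with $\gamma$ in your definition of $\delta_*$ is redundant (you always get $\delta_*=a/2$), but this does no harm.
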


Using the reals $a,c,\gamma,\delta>0$, we are now able to formulate the additional assumptions on $f:\R\to\R$ that will be essential throughout the present section:
\begin{equation}\label{eq: assumptions 3}\tag{H''}
  \left\{\begin{array}{l}\mbox{$f$ is odd and $|f| \leq \gamma$ on $[0,2a]$;}\\[0.2cm]
\mbox{$f(x)=\begin{cases}-\gamma,& \text{for }x\in[a,2a-c];\\
-\delta,& \text{for }x\in[2a,\gamma].\end{cases}$}\\
  \end{array} \right.
\end{equation}
Provided that, apart from (\ref{eq: assumptions 3}), the function $f$ also satisfies the imposed standard hypothesis (\ref{eq: assumptions}), Eq. \eqref{eq: proto} has a slowly oscillating periodic solution as we now show.

\begin{prop}\label{prop: example 2}
  Given reals $a,c,\gamma,\delta>0$ satisfying conditions (i)--(iii), suppose that (H) and (H'') hold. Then Eq. \eqref{eq: proto} has a slowly oscillating periodic solution with minimal period greater than $4$ and given by the length of an interval containing three consecutive zeros.
\end{prop}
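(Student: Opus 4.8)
\emph{Proof proposal.} The plan is to run the argument of Lemma~\ref{prop: V} once more: produce a relatively open, convex set $V\subset K$ with $0\notin\overline{V}$ and $P(\overline{V})\subset V$, deduce from property~(I) of the fixed point index that $i_{K}(P,V)=1$ and that $P$ has a fixed point $\varphi^{*}\in V$, and note that $\varphi^{*}\neq 0$ since $0\notin\overline{V}$, so that by Proposition~\ref{prop: basic 2} and the discussion following Proposition~\ref{prop: return map} the function $x^{\varphi^{*}}$ is a slowly oscillating periodic solution with minimal period $z_{2}(\varphi^{*})+1$, the length of a minimal interval containing three consecutive zeros. The only point beyond Lemma~\ref{prop: V} is that this period exceeds $4$, and this will drop out of the construction of $V$.

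Exactly as in Lemma~\ref{prop: V}, $V$ will be the set of $\varphi\in K$ whose graph lies above a fixed piecewise-linear curve --- necessarily more elaborate here than the single ramp used there --- so that openness, convexity, the form of $\overline{V}$, and $0\notin\overline{V}$ come for free once the curve is strictly positive somewhere; the content is the choice of curve and the inclusion $P(\overline{V})\subset V$. The curve is chosen so that every $\varphi\in\overline{V}$ has, within $[-1,0]$, already risen to a value of order $\gamma$ and then leveled off --- this being the profile the return map produces, since past a zero a solution is driven upward, essentially all the way to $\gamma$, climbing at slope $\gamma$ while the delayed argument runs through $[-(2a-c),-a]$, on which $f\equiv\gamma$ by oddness. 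One then tracks a solution $x$ with $x_{0}\in\overline{V}$: since $x$ starts near $\gamma$, the delayed argument $x(\cdot-1)$ lies in the slow plateau $[2a,\gamma]$, so $x$ falls only at the small rate $\delta$; by~(ii) this slow descent lasts for a time at least $(\gamma-2a)/\delta\ge 2a/\delta$, which, since (iii) forces $\delta<a$, exceeds $2$, so the first positive zero obeys $z_{1}>2$. Once the delayed argument crosses into the transition band $[2a-c,2a]$ the rate of $x$ may rise toward $\gamma$, and conditions~(i) and~(iii) are tailored precisely so that $x$ stays above $a$ until the delayed argument reaches the fast band $[a,2a-c]$; thereafter $x$ falls at rate $\gamma$, crosses $0$ at $z_{1}$, and --- again using $\gamma\ge4a$ --- is carried well below $-2a$ before reaching its minimum at $t=z_{1}+1$. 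Because $f$ is odd, applying the foregoing to the solution $t\mapsto-x(t+z_{1}+1)$, whose initial segment again lies in $\overline{V}$, reproduces the picture with signs reversed on the next half-cycle and yields $x_{z_{2}+1}\in V$; hence $P(\overline{V})\subset V$. Since $z_{1}>2$ forces $z_{2}>z_{1}+1>3$, the period $z_{2}+1$ exceeds $4$.

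The laborious part is the tracking itself: checking that a solution issuing from $\overline{V}$ really does execute the slow-descent / crash / opposite-sign-slow-descent pattern, that it returns to zero a second time (so $P$ does not send $\overline{V}$ to $0$), and that it lands strictly inside $V$, all while keeping careful count of the one-unit lag between $x(t)$ and the value $x(t-1)$ that governs $x'(t)$; this is where (i)--(iii) are used in full. I expect the main obstacle to be twofold: pinning down the curve sharply enough that $\overline{V}$ is small enough for $0\notin\overline{V}$ yet large enough to be forward-invariant under $P$, and making the return estimate \emph{strict}, which --- as in Lemma~\ref{prop: V} --- should call for a small slack parameter folded into the curve.
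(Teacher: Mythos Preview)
Your plan and the paper's diverge at the very first step. The paper does \emph{not} touch $K$, $P$, or the fixed point index in proving this proposition; it gives a bare-hands construction. One fixes any $\varphi\in C$ with $\varphi(0)=-2a$ and $\varphi([-1,0])\subset[-\gamma,-2a]$ and tracks $x=x^{\varphi}$ through successive one-unit windows, using (H'') to read off $x'$ explicitly on each piece. One arrives at a time $\tau_{3}$ with $x(\tau_{3})=2a$ and $x([\tau_{3}-1,\tau_{3}])\subset[2a,\gamma]$; since $f$ is odd, $-x_{\tau_{3}}$ satisfies the same hypotheses as $\varphi$, whence $x(\tau_{3}+t)=-x(t)$ for all $t\ge0$. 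Thus $x$ is anti-periodic with anti-period $\tau_{3}$, hence periodic with period $2\tau_{3}$, and a direct estimate on the slow phase gives $\tau_{3}>2$. No invariant wedge, no Schauder-type argument, and the periodic solution is exhibited explicitly rather than merely shown to exist. The invariant-$V$ construction you outline is essentially what the paper carries out in the \emph{next} result, Proposition~\ref{prop: V2}, but there it is derived from the explicit solution just found, not used to produce it.

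Your period argument also has a genuine gap. You write that for $\varphi\in\overline{V}\subset K$ ``the delayed argument $x(\cdot-1)$ lies in the slow plateau $[2a,\gamma]$, so $x$ falls only at the small rate $\delta$,'' and conclude $z_{1}>(\gamma-2a)/\delta>2$. But every $\varphi\in K$ has $\varphi(-1)=0$, so for small $t>0$ the delayed value $x(t-1)=\varphi(t-1)$ is near $0$ and must first pass through the band $[a,2a-c]$, on which $f\equiv-\gamma$; there the descent is \emph{fast}, not slow. The slow phase only begins once the delayed value has climbed past $2a$, and it ends once the (current) value drops back to $2a$---which happens before $z_{1}$. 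So the quantity $(\gamma-2a)/\delta$ does not bound $z_{1}$ from below as you claim; in the paper's coordinates one has $z_{1}=\tau_{3}-1$, and the estimate that makes the period exceed $4$ is $\tau_{3}>2$, i.e.\ the gap between \emph{consecutive} zeros exceeds $2$, not that the first zero after the $K$-segment occurs after time $2$.
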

\begin{proof}
The reader may find it helpful to refer to Fig. 2.

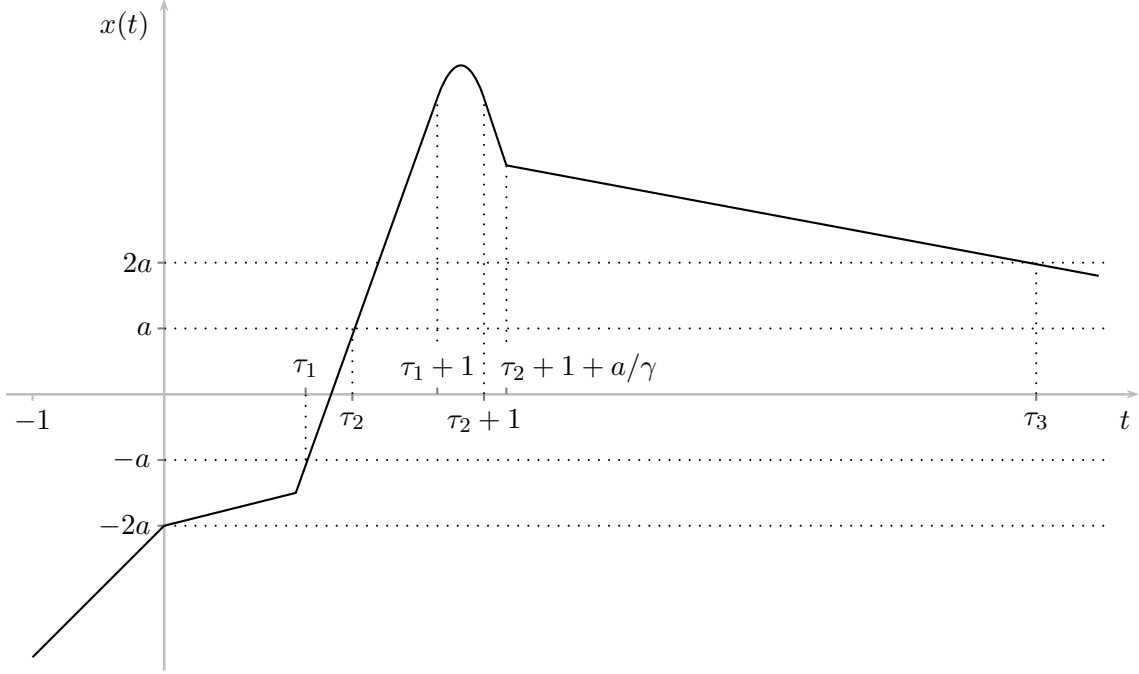
\begin{figure}[htb]
	\psset{unit=1.75cm}
	\begin{pspicture}(-1.5,-2.2)(7.5,3)
	\psaxes[labels=none,linecolor=lightgray,ticks=none]{->}(0,0)(-1.2,-2.1)(7.4,3)
	\psline[linecolor=lightgray](-1,0)(-1,-0.05)
	\psline[linecolor=black, linestyle=dotted](0,0.5)(7.2,0.5)
	\psline[linecolor=gray](0,0.5)(-0.05,0.5)
	\psline[linecolor=black, linestyle=dotted](0,-0.5)(7.2,-0.5)
	\psline[linecolor=gray](0,-0.5)(-0.05,-0.5)
	\psline[linecolor=black,linestyle=dotted](0,1.0)(7.2,1.0)
	\psline[linecolor=gray](0,1)(-0.05,1)
	\psline[linecolor=black,linestyle=dotted](0,-1.)(7.2,-1.)
	\psline[linecolor=gray](0,-1)(-0.05,-1)
	\rput(-0.25,-0.5){$-a$}
	\rput(-.15,0.5){$a$}
	\rput(-.2,1){$2a$}
	\rput(-0.3,-1){$-2a$}
	\psline(-1,-2)(0,-1)(1.,-0.75)(2.075,2.25)
	\pscurve(2.075,2.25)(2.255,2.5)(2.43,2.25)
	\psline(2.43,2.25)(2.6,1.74)
	\psline(2.6,1.74)(7.1,0.9)
	\psline[linecolor=black,linestyle=dotted](2.6,1.74)(2.6,0.4)
	\rput(-1.0,-0.2){$-1$}
	\psline[linecolor=black,linestyle=dotted](1.075,0)(1.075,-0.5)
	\psline[linecolor=gray](1.075,0)(1.075,0.05)
	\psline[linecolor=black,linestyle=dotted](2.075,0.4)(2.075,2.2)
	\psline[linecolor=gray](2.075,0)(2.075,0.05)
	\rput(1.075,0.2){$\tau_{1}$}
	\psline[linecolor=black,linestyle=dotted](1.43,0)(1.43,0.5)
	\psline[linecolor=gray](1.43,0)(1.43,-0.05)
	\rput(1.43,-0.2){$\tau_{2}$}
	\rput(2.075,.2){$\tau_{1}+1$}
	\psline[linecolor=gray](2.6,0)(2.6,0.05)
	\psline[linecolor=gray](2.43,0)(2.43,-0.05)
	\rput(2.43,-0.2){$\tau_{2}+1$}
	\psline[linecolor=black,linestyle=dotted](2.43,0)(2.43,2.2)
	\rput(3.15,0.2){$\tau_{2}+1+a/\gamma$}
	\psline[linecolor=black,linestyle=dotted](6.625,0)(6.625,1)
	\psline[linecolor=gray](6.625,0)(6.625,-0.05)
	\rput(6.625,-0.2){$\tau_{3}$}
	\rput(7.3,-0.2){$t$}
	\rput(-0.3,2.8){$x(t)$}
	\end{pspicture}
	\caption{Proof of Proposition \ref{prop: example 2}}\label{fig: proof}
\end{figure}

  1. Let $\varphi\in C$ be given with $\varphi(0)=-2a$ and with $-\gamma\leq \varphi(s)\leq -2a$ for all $s\in[-1,0]$, and let $x=x^{\varphi}:[-1,\infty)\to\R$ denote the uniquely determined solution of Eq. \eqref{eq: proto} with $x_{0}=\varphi$. Then, in view of assumption \eqref{eq: assumptions 3}, we have $x(t)=-2a+\delta t$ for all $0\leq t\leq 1$. In particular, $x$ is strictly increasing on $[0,1]$.

   Now, note that $c/\delta<1$ by condition (i) and $-2a+\delta<-a$ by condition (iii). Thus $x(t)\geq x(c/\delta)=-2a+c$ for $t\in[c/\delta,1]$ and $x(1)=-2a+\delta<-a$. Moreover, for all $t\in[1,1+c/\delta]$, $x(t)$ is increasing with derivative at most $\gamma$. We thus have the estimate
  \begin{equation*}
    x(1+c/\delta)\leq x(1)+\gamma\frac{c}{\delta} =-2a+\delta+\gamma\frac{c}{\delta},
  \end{equation*}
  which together with condition (iii) guarantees that $x(t)\in[-2a+c,-a]$ for all $t\in[c/\delta,1+c/\delta]$.

  2. Define $\tau_{1}:=\min\lbrace t\geq 1+c/\delta\mid x(t)=-a\rbrace$. Then the last observation in combination with assumption \eqref{eq: assumptions 3} shows that $x(\tau_{1}+s)=-a+\gamma\,s$ for all $s\in[0,1]$.  Set $\tau_{2}:=\tau_{1}+2a/\gamma$. By condition (ii) we have $\tau_{2} < \tau_2 + a/\gamma < \tau_{1}+1$, and so obtain
  \begin{equation*}
  x(\tau_{2})=a, \ \ x(\tau_2 + a/\gamma) = 2a, \ \ x(\tau_1 + 1) = -a + \gamma.
  \end{equation*}
 Thus, as $t$ increases from $\tau_{1}$ to $\tau_{1}+1$, $x(t)$ increases from the value $x(\tau_{1})=-a$ with constant slope $\gamma$, transverses the interval $[-a,a]$, and finishes at the value $x(\tau_{1}+1)=-a+\gamma$.  Note that $-a + \gamma \geq 3a$ by condition (ii).

  3. Now, from the oddness of the map $f$ and the fact that $[\tau_{1},\tau_{2}]\ni s\mapsto x(s)$ transverses the interval $[-a,a]$ at constant slope $\gamma$, it follows that $[0,2a/\gamma]\ni s\mapsto x(\tau_{1}+1+s)$
  traces out a symmetric arc with maximum at $s=a/\gamma$ and a maximal value of
  \begin{equation*}
  x(\tau_{1}+1+a/\gamma) < x(\tau_{1}+1)+\gamma \frac{a}{\gamma}=(-a+\gamma)+a=\gamma.
  \end{equation*}
   Furthermore, we have
  \begin{equation*}
    x(\tau_{2}+1)=x(\tau_{1}+2a/\gamma+1)=x(\tau_{1}+1)=-a+\gamma
  \end{equation*}
  and so $x(t)\in[3a,\gamma]$ for all $t\in[\tau_{1}+1,\tau_{2}+1]$.

 4. On the interval $[a,2a]$, $f$ is bounded below by $-\gamma$ but, by continuity, is not identically equal to  $-\gamma$. Accordingly, $[0,a/\gamma]\ni s\mapsto x(\tau_{2}+1+s)$ is decreasing from $x(\tau_{2}+1)=\gamma-a$ to $x(\tau_{2}+1+a/\gamma)$ where for this last value we crudely have
  \begin{equation*}
    x(\tau_{2}+1+a/\gamma)>x(\tau_{2}+1)-\gamma\frac{a}{\gamma}\geq -a+\gamma-a=\gamma-2a
  \end{equation*}
  and so $x(\tau_{2}+1+a/\gamma)>2a$ by condition (ii).  We have established that $x(t) \in (2a,\gamma)$ for all $t\in(\tau_{2}+a/\gamma,\tau_{2}+1+a/\gamma]$.

 5. As $t$ increases from $\tau_{2}+a/\gamma+1$,  $x(t)$ will decrease with slope $-\delta$ until time
  \begin{equation*}
  \tau_{3}:=\min\lbrace t>\tau_{2}+a/\gamma+1\mid x(t)=2a\rbrace>\tau_{2}+1+a/\gamma.
   \end{equation*}
   In particular, we have
$x(t)\in[2a,\gamma]$ for all $t\in[\tau_{3}-1,\tau_{3}]$. Thus $-x_{\tau_3}$ satisfies the same conditions as our initial condition $\varphi$; by using the oddness of $f$ and applying the arguments above again, it follows that $x(\tau_{3}+t)=-x(t)$ and secondly that
  \begin{equation*}
    x(2\tau_{3}+t)=-x(\tau_{3}+t)=-(-x(t))=x(t)
  \end{equation*}
  for all $t\geq 0$. In particular, since $x$ has only a single zero in $[0,\tau_{3}]$, the distance between any two zeros of $x$ in $[0,\infty)$ is greater than or equal to $\tau_{3}>1$. Hence, $\mathcal{O}:=\lbrace x(t)\mid 0\leq t\leq 2\tau_{3}\rbrace$ is the orbit of a slowly oscillating periodic solution of Eq. \eqref{eq: proto} with period  $2\tau_{3}$.

  5. It remains to prove that $\tau_{3}>2$. In order to do so, observe that
  \begin{equation*}
  \tau_{3}-(\tau_{2}+a/\gamma+1)
      \geq \frac{x(\tau_{2}+a/\gamma +1)-x(\tau_{3})}{\delta}
      >\frac{(\gamma-2a)-2a}{\delta}=\frac{\gamma-4a}{\delta}.
  \end{equation*}
Hence, it follows that
  \begin{equation}\label{eq: auxi_3}
    \tau_{3}>\tau_{2}+\frac{a}{\gamma}+1+\frac{\gamma-4a}{\delta}\geq \tau_{1}+\frac{2a}{\gamma}+\frac{a}{\gamma}+1+\frac{\gamma-4a}{\delta}\geq 2+\frac{c}{\delta}+\frac{3a}{\gamma}+\frac{\gamma-4a}{\delta}.
  \end{equation}
  This shows that $2\tau_{3}>4$ and completes the proof.
\end{proof}

\begin{remark}
 Observe that the last step of the proof indicates that, by choosing appropriate parameters $a,c, \gamma,\delta>0$ and adapting the map $f$ correspondingly, the period $2\tau_{3}$ of the resulting slowly oscillating periodic solution can be made as large as desired. For since
  \begin{equation*}
    \tau_{3}>2+\frac{\gamma-4a}{\delta}
  \end{equation*}
  by estimate \eqref{eq: auxi_3}, we can make $2\tau_{3}$ as large as we like by taking $\gamma > 4a$ and then choosing $\delta$ (and then $c$) small enough given parameters $a$ and $\gamma$.
\end{remark}

Of course, the slowly oscillating periodic solution found in Proposition \ref{prop: example 2} has a segment in the closed bounded convex set $K\subset C$ that is a nonzero fixed point of the return map $P$. Moreover, this periodic solution is so strongly attractive that $P$ is constant in each sufficiently small neighborhood of this fixed point, as we now show.

\begin{prop}\label{prop: V2}
  Under the hypothesis of Proposition \ref{prop: example 2}, there is an open convex subset $V\subset K$ such that $0\not\in\overline{V}$ and $P(\overline{V})\subset V$.

Moreover, the fixed point of $P$ in $V$ is a segment of the slowly oscillating periodic solution of Eq. \eqref{eq: proto} obtained in Proposition \ref{prop: example 2} and $P$ maps all of $\overline{V}$ to this fixed point.
\end{prop}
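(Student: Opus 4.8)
The plan is to exploit that $f$ equals the constant $-\delta$ on $[2a,\gamma]$ and is odd: together these force every solution whose segment ever lies in $[2a,\gamma]$ onto a single orbit, so that $P$ becomes \emph{constant} near the periodic point of Proposition~\ref{prop: example 2}. First I set up this orbit. Let $\hat x:[0,\infty)\to\R$ be the forward continuation, built in the proof of Proposition~\ref{prop: example 2}, of any segment $\varphi$ with $\varphi(0)=-2a$ and $-\gamma\le\varphi\le-2a$ on $[-1,0]$; it does not depend on the particular such $\varphi$ (on $[0,1]$ it is given by $\hat x(t)=-2a+\delta t$, after which it is determined), it is periodic of period $2\tau_3$ with $\hat x(\tau_3+t)=-\hat x(t)$, it has a single zero $\zeta$ in $(0,\tau_3)$, and — by the chain of inequalities producing \eqref{eq: auxi_3}, which in particular gives $\tau_3-\zeta-2a/\gamma>1$ — it stays in $[2a,\gamma]$ throughout an interval of length $>1$ lying between its first two positive zeros $\zeta$ and $\zeta+\tau_3$. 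Put $q:=\hat x_{\zeta+1}$. Then $q\in K$: it is nondecreasing on $[-1,0]$ with $q(-1)=\hat x(\zeta)=0$ and $\|q\|=\hat x(\zeta+1)<\gamma$, so it suffices to have chosen the bound $M$ in the definition of $K$ to be $\ge\gamma$. Since $\hat x$ — equivalently, the periodic solution of Proposition~\ref{prop: example 2} — is slowly oscillating with one zero per half period, $q$ is a nonzero fixed point of $P$, as already noted in the text preceding this proposition.

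The heart of the proof is the following collapse statement. \emph{Let $x$ be a solution of Eq.~\eqref{eq: proto} with $x(s)\in[2a,\gamma]$ for all $s\in[\alpha,\alpha+1]$. Then $x$ has slope exactly $-\delta$ on $[\alpha+1,\infty)$ until it first attains the value $2a$, at some time $\tau\le\alpha+1+(\gamma-2a)/\delta$, and $x(\tau+t)=-\hat x(t)$ for all $t\ge0$.} Indeed, $f\equiv-\delta$ on $[2a,\gamma]$ gives $x'\equiv-\delta$ on $[\alpha+1,\alpha+2]$; since $\delta<\gamma-2a$ (which follows from (i)--(iii)), a one-step induction propagates this slope forward until $x$ reaches $2a$. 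At that moment the segment $x_\tau$ has all its values in $[2a,\gamma]$ with $x_\tau(0)=2a$, so $-x_\tau$ satisfies the hypotheses imposed on the initial segment at the start of the proof of Proposition~\ref{prop: example 2}; hence the continuation of $-x_\tau$ coincides with $\hat x$ on $[0,\infty)$, and oddness of $f$ turns this into: the continuation of $x_\tau$ is $-\hat x$. As $x$ beyond $\tau$ is the continuation of $x_\tau$, the claim follows.

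Now put $V:=\{\varphi\in K\mid\|\varphi-q\|<\rho_0\}$ for a small $\rho_0>0$; this is relatively open and convex, and taking $\rho_0<\|q\|$ yields $0\notin\overline V$. Because $\hat x$ dwells in $[2a,\gamma]$ for more than one time unit, the corresponding stretch of $x^q$ lies strictly inside $(2a,\gamma)$ on some closed unit interval $[\alpha_1,\alpha_1+1]\subset(-1,\infty)$ which precedes the first positive zero of $x^q$, and $x^q$ is $\ge2a$ on $[0,\tau(q)]$, $\tau(q)$ being its collapse time. By continuity of the semiflow on the bounded set $\overline V$ over a fixed finite time, we may shrink $\rho_0$ so that for every $\varphi\in\overline V$ one still has $x^\varphi(s)\in[2a,\gamma]$ for $s\in[\alpha_1,\alpha_1+1]$ and $x^\varphi>0$ on $[0,\tau(\varphi)]$, where $\tau(\varphi)$ is the collapse time provided above. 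Consequently $x^\varphi(\tau(\varphi)+t)=-\hat x(t)$ for $t\ge0$; as $x^\varphi$ has no zero in $(0,\tau(\varphi)]$ and the positive zeros of $-\hat x$ are precisely the $\zeta+m\tau_3$, $m\ge0$, the first two positive zeros of $x^\varphi$ are $\tau(\varphi)+\zeta$ and $\tau(\varphi)+\zeta+\tau_3$. Therefore, using $\hat x(\tau_3+\cdot)=-\hat x(\cdot)$ and $2\tau_3$-periodicity,
\[
P(\varphi)=x^\varphi_{\tau(\varphi)+\zeta+\tau_3+1}=\bigl(s\mapsto-\hat x(\tau_3+\zeta+1+s)\bigr)=\bigl(s\mapsto\hat x(\zeta+1+s)\bigr)=q .
\]
Thus $P(\overline V)=\{q\}\subset V$, so $q$ is the unique fixed point of $P$ in $V$, which is everything asserted.

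The step that will need the most care is the third paragraph: one must pull from the proof of Proposition~\ref{prop: example 2} the sharp quantitative information about $\hat x$ (that it remains in $[2a,\gamma]$ for strictly more than one unit of time, positioned so that, once $q$ is placed in $K$, this occurs before the relevant zeros of $x^q$), and then convert the pointwise continuity of the semiflow into an estimate uniform over $\overline V$, all while tracking phases carefully enough that $P$ lands on $q$ itself rather than on some other segment of $\hat x$.
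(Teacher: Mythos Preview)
Your proof is correct and follows essentially the same route as the paper's: both center $V$ at the segment $q = \hat x_{\zeta+1}$ (the paper writes this as $x_{s_1}$ with $s_1 = \tau_1 + a/\gamma + 1 = \zeta+1$), invoke continuous dependence to trap $x^\varphi$ in $(2a,\gamma)$ over a unit time interval, and then use $f\equiv -\delta$ there to force the continuation onto a single translate of the periodic orbit. Your version is more explicit---you isolate the ``collapse statement'' and carry the computation of $P(\varphi)$ through to the identification with $q$---whereas the paper compresses this into a sentence.
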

\begin{proof}
We revisit the proof of Proposition \ref{prop: example 2} and use the same notation in the following.
To begin with, define $s_{1}:=\tau_{1}+a/\gamma+1$ and $s_{2}:=\tau_{2}+a/\gamma+1$.  We have $x_{s_{1}}\in K$.

Now, the proof of Proposition \ref{prop: example 2} shows that there are numbers $\triangle > 0$ and $\epsilon > 0$ such that $x(t) \in (2a + \epsilon,\gamma - \epsilon)$ for all $t \in [s_2+\triangle - 1, s_2 + \triangle]$.  Now by continuous dependence on initial conditions there is some open set $V$ about $x_{s_1}$ such that $y_0 \in \overline{V}$ implies that $y(s) \in (2a,\gamma)$ for $s \in [s_2+\triangle - 1- s_1, s_2 + \triangle - s_1]$.  The definition of $f$ now allows $y$ to continue in only one way; in particular, $y(s)$ will coincide with a translate of $x$ for $s \geq s_2 + \triangle - s_1$.
\end{proof}

Note that the stability of the zero solution of \eqref{eq: proto} has been, so far, completely irrelevant for our discussion of the current example. On the other hand, under the assumption of local exponential stability of the zero solution we can take the same approach as in Lemma \ref{prop: U} to show the existence of multiple slowly oscillating periodic solutions of Eq. \eqref{eq: proto}.  We accordingly omit the proof of the following proposition.

\begin{prop}\label{prop: example 2 stable}
  Under the hypotheses of Proposition \ref{prop: example 2}, if moreover $f^{\prime}(0)\in(-\pi/2,0)$ holds then Eq. \eqref{eq: proto} has at least two distinct slowly oscillating periodic solutions, each with period given by three consecutive zeros.
\end{prop}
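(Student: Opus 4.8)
The plan is to argue exactly as in the proof of Proposition \ref{prop: example one}, applying the fixed point index to the continuous compact map $P: K \to K$ on the closed bounded convex set $K$, with the two required relatively open convex subsets essentially already in hand. For the set $V$ we take the one produced by Proposition \ref{prop: V2}: it is open and convex, satisfies $0 \notin \overline{V}$, and $P(\overline{V}) \subset V$, so by property (I) of the index $P$ has a fixed point in $V$ and $i_K(P,V) = 1$; moreover this fixed point is a segment of the slowly oscillating periodic solution of Eq. \eqref{eq: proto} constructed in Proposition \ref{prop: example 2}. For the neighborhood $U$ of the origin we invoke Lemma \ref{prop: U}, whose hypotheses --- namely (H) together with $f'(0) \in (-\pi/2,0)$ --- are precisely those now assumed; this yields an open convex $U \subset K$ with $0 \in U$ satisfying the hypotheses of Corollary \ref{cor: ind}, and hence $i_K(P,U) = 1$.

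Next I would dispatch the one point not covered verbatim by the cited lemmas, namely that $U$ and $V$ can be chosen with $\overline{U} \cap \overline{V} = \emptyset$. In Lemma \ref{prop: U} the set $U$ has the form $\{\varphi \in K \mid \|\varphi\| < \epsilon\}$, and since the present $V$ also satisfies $0 \notin \overline{V}$, shrinking $\epsilon$ if necessary makes $\overline{U}$ and $\overline{V}$ disjoint, exactly as in the original argument. We also record that $P$ is fixed-point free on $\del V = \overline{V} \setminus V$, because $P(\overline{V}) \subset V$, and on $\del U$, because no power of $P$ --- in particular $P$ itself --- has a fixed point in $\overline{U}$ other than $0 \in U$.

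Finally, suppose for contradiction that $P$ had no fixed point in $K \setminus (\overline{U} \cup \overline{V})$. Then, using fixed-point-freeness on the two boundaries, $\mathrm{Fix}(P) \subset U \cup V$, and property (II) would give $1 = i_K(P,K) = i_K(P,U) + i_K(P,V) = 1 + 1 = 2$, a contradiction. Hence $P$ has a fixed point $q \in K \setminus (\overline{U} \cup \overline{V})$. Since $q \notin U$ we have $q \neq 0$, so $q$ is a segment of a nontrivial slowly oscillating periodic solution of Eq. \eqref{eq: proto}, and since $q$ is a fixed point (rather than a higher-period periodic point) of $P$ the corresponding solution has minimal period equal to the length of a minimal interval containing three consecutive zeros --- exactly as for the solution supplied by Proposition \ref{prop: example 2}, whose segment lies in $V$. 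As $q$ and that segment lie in the disjoint sets $K \setminus (\overline{U} \cup \overline{V})$ and $V$, and a slowly oscillating periodic solution has an essentially unique segment in $K$ (the one based at an up-crossing zero, by Proposition \ref{prop: basic 2}), the two fixed points correspond to two genuinely distinct slowly oscillating periodic solutions. The proof involves no new difficulty beyond this bookkeeping, which is why it may safely be omitted; the only place any care is needed is in confirming that the $V$ of Proposition \ref{prop: V2} can be separated from the origin-neighborhood $U$ of Lemma \ref{prop: U}, and this is immediate from $0 \notin \overline{V}$.
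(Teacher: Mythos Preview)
Your proposal is correct and follows exactly the approach the paper indicates: combine the set $V$ from Proposition~\ref{prop: V2} (playing the role that Lemma~\ref{prop: V} played in Section~\ref{SEC3}) with the origin-neighborhood $U$ from Lemma~\ref{prop: U}, verify $\overline{U}\cap\overline{V}=\emptyset$ by shrinking $\epsilon$, and then run the index argument of Proposition~\ref{prop: example one} verbatim. The paper omits the proof precisely because it is this routine replay, and your write-up fills in the bookkeeping faithfully.
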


But also in the situation where $f^{\prime}(0)<-\pi/2$ holds, and thus the zero solution of Eq. \eqref{eq: proto} is unstable, Eq. \eqref{eq: proto} has at least two distinct slowly oscillating periodic solutions, provided that the conditions of Proposition \ref{prop: example 2} are satisfied and that (for example) $f$ is constant outside some neighborhood of the origin --- for then Theorem \ref{KYTHM} applies.  We therefore have the following.

\begin{prop}\label{prop: example 2 unstable period 4}
  Suppose that the assumptions of Proposition \ref{prop: example 2} are satisfied, that $f$ is constant outside some open neighborhood of $0\in\R$, and that additionally $f^{\prime}(0)<-\pi/2$. Then Eq. \eqref{eq: proto} has at least two distinct slowly oscillating periodic solutions.

  More precisely, apart from the slowly oscillating periodic solution with minimal period greater than $4$, Eq. \eqref{eq: proto} has a slowly oscillating periodic solution $x:\R\to\R$ with minimal period $4$ and the symmetry $x(t)=-x(t-2)$ for all $t\in\R$.
\end{prop}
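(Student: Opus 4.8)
The plan is to derive the proposition from Proposition~\ref{prop: example 2} together with Theorem~\ref{KYTHM}. Proposition~\ref{prop: example 2} already produces one slowly oscillating periodic solution $q$ of Eq.~\eqref{eq: proto} with minimal period strictly greater than $4$; what remains is to produce, in addition, a Kaplan-Yorke solution (that is, a solution of minimal period $4$ with the symmetry $x(t) = -x(t-2)$) and to verify that it too is slowly oscillating. Since a function of minimal period $4$ cannot coincide with one of minimal period greater than $4$, the Kaplan-Yorke solution and $q$ will then automatically be distinct, and the ``more precisely'' part of the statement will be nothing but the extra structure of the Kaplan-Yorke solution. To obtain the Kaplan-Yorke solution, I would simply check that the hypotheses of Theorem~\ref{KYTHM} are satisfied.

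So suppose $f$ satisfies (H), (H''), conditions (i)--(iii), $f'(0) < -\pi/2$, and is constant outside some (bounded) open neighborhood of $0$, so that $f$ is constant for all sufficiently large $|x|$. Oddness of $f$ is part of (H''). Let $\ell$ be the constant value of $f$ for $x$ large and positive; then $\ell < 0$ by the negative feedback condition (and $f \equiv -\ell$ for $x$ large and negative, by oddness). Writing $a := \lim_{x\to 0} f(x)/x$ and $A := \lim_{x\to\infty} f(x)/x$ for the quantities appearing in Theorem~\ref{KYTHM}, we have $a = f'(0) < -\pi/2$ by hypothesis, while $A = \lim_{x\to\infty} \ell/x = 0$; hence $a < -\pi/2 < A$. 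Moreover, choosing $M$ with $f \equiv \ell$ on $[M,\infty)$, we get $\int_0^\infty |f(x)|\,dx \geq \int_M^\infty |\ell|\,dx = \infty$. Thus every hypothesis of Theorem~\ref{KYTHM} holds, and Eq.~\eqref{eq: proto} has a Kaplan-Yorke solution $x\colon\R\to\R$, which by definition has minimal period $4$ and satisfies $x(t) = -x(t-2)$ for all $t\in\R$.

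It remains to observe that $x$ is slowly oscillating, which is a classical feature of Kaplan-Yorke solutions. Concretely: on the level curve $H^{-1}(\{\alpha\})$ of the Hamiltonian~\eqref{eq: Hamiltonian} traced out by the associated solution of~\eqref{eq: ODE}, the negative feedback condition forces the planar flow to pass in turn through the four open quadrants, crossing each coordinate semi-axis exactly once per period; in particular the first coordinate $u = x$ vanishes exactly twice per period of length $4$, hence --- by the symmetry $x(t) = -x(t-2)$ --- at two points exactly $2 > 1$ apart. Therefore any two zeros of $x$ differ by more than one unit, so $x$ is slowly oscillating. Together with $q$ from Proposition~\ref{prop: example 2}, this gives the two distinct slowly oscillating periodic solutions asserted.

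I do not expect a genuine obstacle: the argument amounts to checking that the two additional hypotheses here --- $f'(0) < -\pi/2$ and $f$ eventually constant --- are exactly what is needed in order to run Theorem~\ref{KYTHM} alongside Proposition~\ref{prop: example 2}. The only steps deserving a little care are the verification that the Kaplan-Yorke solution really is slowly oscillating (equivalently, that the first coordinate of its planar orbit has only two zeros per period) and the benign interpretation of ``constant outside a neighborhood of $0$'' as ``constant for all large $|x|$,'' which is what makes the limit and integral hypotheses of Theorem~\ref{KYTHM} available.
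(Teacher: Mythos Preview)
Your proposal is correct and follows exactly the paper's approach: the paper does not give a formal proof but simply notes, in the paragraph preceding the proposition, that under the extra hypotheses Theorem~\ref{KYTHM} applies, yielding a Kaplan--Yorke solution of minimal period $4$ alongside the long-period solution from Proposition~\ref{prop: example 2}. One cosmetic point: you reuse the symbol $a$ for $\lim_{x\to 0}f(x)/x$, which clashes with the parameter $a$ already fixed in (H''); rename it to avoid confusion.
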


\begin{remark}
  The last statement of course remains true if the assumption that $f$ is constant outside some open neighborhood of the origin is replaced by any condition that makes Theorem \ref{KYTHM} hold.
\end{remark}

It is now natural to ask whether, at least under additional conditions on $f$, one of the slowly oscillating periodic solutions obtained in Proposition \ref{prop: example 2 stable} (and so in case of a locally asymptotically stable zero solution) may also be a Kaplan-Yorke solution.  In order to address this issue, we should recall some details of the approach taken in \cite{Kaplan and Yorke 1974} for the proof of the existence of Kaplan-Yorke solutions for Eq. \eqref{eq: proto}.

We continue to assume that $f$ satisfies assumptions (H) and (H''), is globally $C^{1}$-smooth, and is constant outside some neighborhood of the origin. As under these conditions we have $H(u,v)\to\infty$ as $u^2+v^2\to\infty$ for the Hamiltonian $H$ defined by Eq. \eqref{eq: Hamiltonian} it follows that for each initial condition of the form $(u_{0},0)\in\R^{2}$, where $u_{0}>0$, Eq. \eqref{eq: ODE} has a uniquely determined periodic solution $r(\cdot;u_{0})=(u(\cdot;u_{0}),v(\cdot;u_{0})):\R\to\R^{2}$ with $r(0;u_{0})=(u_{0},0)$. Its orbit is given by $H^{-1}(\lbrace\alpha\rbrace)$ for $\alpha=-\int_{0}^{u_{0}}f(s)\,ds$. Set now for each $u_{0}>0$
 \begin{equation}\label{eq: period}
   \tau(u_{0}):=\min\lbrace t>0\mid u(t;u_{0})=0\rbrace,
 \end{equation}
 which is well-defined since $r(\cdot;u_{0})$ traces out a periodic orbit around the origin. Moreover, in view of $r^{\prime}(0;u_{0})=(0,-f(u_{0}))$ with $f(u_{0})<0$ and the discussed symmetry of $H$, if follows that
 \begin{align*}
 r(0;u_{0})=(u_{0},0),\quad r(\tau(u_{0});u_{0})=(0,u_{0}),\quad
 r(2\tau(u_{0});u_{0})
   =(-u_{0},0),\quad
   r(3\tau(u_{0});u_{0})=(0,-u_{0})
\end{align*}
and
\begin{equation*}
r(4\,\tau(u_{0});u_{0})=(u_{0},0)=r(0;u_{0}).
\end{equation*}
In particular, the solution $r(\cdot;u_{0})$ has the period $4\tau(u_{0})$ and this period is minimal. Accordingly, in order to find a periodic solution of Eq. \eqref{eq: proto} we seek $u_{0}>0$ such that $\tau(u_{0})=1$. The main tool for doing so will be the following lemma (see \cite{Kaplan and Yorke 1974}).

\begin{lem}\label{lem: period cont}
  Assume that $f:\R\to\R$ is $C^{1}$-smooth, odd, satisfies the negative feedback condition, and is constant outside some neighborhood of the origin. Then the definition \eqref{eq: period} induces a continuous function $\tau:(0,\infty)\to(0,\infty)$ with
  \begin{equation*}
  \lim_{u\to 0^{+}}\tau(u)=-\frac{\pi}{2}\frac{1}{f^{\prime}(0)}\qquad\text{and}\qquad\lim_{u\to\infty}\tau(u)=\infty.
  \end{equation*}
\end{lem}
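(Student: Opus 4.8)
The plan is to establish the three claims --- that $\tau$ is a well-defined, positive, \emph{continuous} function on $(0,\infty)$, and the two limits --- in turn, using throughout the Hamiltonian structure of Eq. \eqref{eq: ODE} and the fact recalled above that each level set $H^{-1}(\{\alpha\})$, $\alpha>0$, is a simple closed curve around the origin. Write $g(w):=-\int_0^w f(s)\,ds$, so that $H(u,v)=g(u)+g(v)$; the negative feedback condition gives $g'(w)=-f(w)>0$ for $w>0$ and, since $f$ is odd, $g$ is even, while ``$f$ constant outside a neighborhood of $0$'' forces $g(w)\to\infty$ as $w\to\infty$. Hence the orbit through each $(u_0,0)$, $u_0>0$, is a closed curve around the origin, and $\tau(u_0)$ is well-defined and positive.

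For continuity I would argue from continuous dependence of solutions of the smooth planar system on initial data, the one essential extra ingredient being \emph{transversality} of the first zero: since $r(\tau(u_0);u_0)=(0,u_0)$ with $u_0>0$, the first equation of \eqref{eq: ODE} gives $u'(\tau(u_0);u_0)=f(u_0)<0$. Consequently $u(\cdot;u_0)$ is bounded away from $0$ on $[0,\tau(u_0)-\varepsilon]$ and strictly changes sign on $[\tau(u_0)-\varepsilon,\tau(u_0)+\varepsilon]$ for small $\varepsilon>0$; by uniform continuity of the flow in $(t,w)$ on compact sets, both features persist for $w$ near $u_0$, which forces $\tau(w)\in(\tau(u_0)-\varepsilon,\tau(u_0)+\varepsilon)$.

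For the limit as $u_0\to0^+$ I would rescale. On the orbit through $(u_0,0)$ one has $g(u)+g(v)=g(u_0)$ with $g$ even and increasing on $[0,\infty)$, so $|u|,|v|\le u_0$; thus $(\bar u,\bar v):=(u/u_0,v/u_0)$ stays in $[-1,1]^2$ and solves $\bar u'=f(u_0\bar v)/u_0$, $\bar v'=-f(u_0\bar u)/u_0$, $(\bar u(0),\bar v(0))=(1,0)$. Since $f$ is $C^1$ near $0$ with $f(0)=0$, the right-hand side tends, as $u_0\to0^+$, uniformly on $[-1,1]^2$ to the linear field $(\bar u,\bar v)\mapsto(f'(0)\bar v,-f'(0)\bar u)$; by continuous dependence on parameters the rescaled solution converges uniformly on compact time intervals to $t\mapsto(\cos(|f'(0)|\,t),\sin(|f'(0)|\,t))$, whose first component first vanishes transversally at $t=\pi/(2|f'(0)|)=-\tfrac{\pi}{2}\tfrac{1}{f'(0)}$. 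As rescaling preserves zeros, $\tau(u_0)$ is literally the first zero of $\bar u$, and the transversal convergence yields $\tau(u_0)\to-\tfrac{\pi}{2}\tfrac{1}{f'(0)}$. (Alternatively one can start from $\tau(u_0)=\int_0^{u_0}dv\,/\,g'\!\big(g^{-1}(g(u_0)-g(v))\big)$, substitute $v=u_0 w$, and pass to the limit by dominated convergence using $g(w)=\tfrac12|f'(0)|w^2+o(w^2)$ and the integrable majorant $\mathrm{const}/\sqrt{1-w^2}$.)

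For the limit as $u_0\to\infty$ a crude arc-length bound suffices: $f$ is continuous and constant outside a bounded set, so $S:=\sqrt2\,\sup_{\R}|f|<\infty$, and between times $0$ and $\tau(u_0)$ the solution traces the arc from $(u_0,0)$ to $(0,u_0)$ at speed $\|r'\|=\sqrt{f(u)^2+f(v)^2}\le S$; the arc length is at least the endpoint distance $\sqrt2\,u_0$, so $\tau(u_0)\ge\sqrt2\,u_0/S\to\infty$. I expect the genuinely delicate step to be the $u_0\to0^+$ analysis: the other two claims reduce to continuous dependence on data together with the geometry of the closed orbits, whereas extracting the exact constant $-\tfrac{\pi}{2}/f'(0)$ requires the rescaling limit \emph{and} the transversality argument guaranteeing that the first-zero time of the rescaled flow actually converges to that of the limiting harmonic oscillator.
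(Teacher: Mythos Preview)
Your argument is correct. For the continuity statement your approach coincides with the paper's: both rely on continuous dependence of the flow on initial data together with transversality of the crossing $u(\tau(u_0);u_0)=0$, $u'(\tau(u_0);u_0)=f(u_0)<0$; the paper phrases the ``no earlier zero'' step via the orbit symmetry (an earlier zero would force the perturbed orbit to visit $(0,-\tilde u)$, contradicting closeness to $r(\cdot;u_0)$), while you obtain it from the positive lower bound for $u(\cdot;u_0)$ on $[0,\tau(u_0)-\varepsilon]$ --- these are equivalent. The real difference is in the two limits: the paper simply invokes Theorem~1 of Kaplan--Yorke \cite{Kaplan and Yorke 1974} for both, whereas you supply self-contained proofs --- the rescaling to the harmonic oscillator for $u_0\to0^+$ and the arc-length/speed bound for $u_0\to\infty$. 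Your route is thus more informative than the paper's at the cost of some extra work; the rescaling computation in particular is exactly the content behind the cited Kaplan--Yorke result, and your arc-length estimate $\tau(u_0)\ge u_0/\sup_{\R}|f|$ is sharper and more elementary than anything the citation provides.
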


\begin{proof}
    Since the proof of the continuity of $\tau$ in \cite{Kaplan and Yorke 1974} is somewhat cursory, we give a fuller proof. Let $u_{0}>0$ and $\epsilon>0$ be given. Set $\tau_{0}:=\tau(u_{0})$ with $\tau(u_{0})$ defined by formula \eqref{eq: period}. As $r(\tau_{0};u_{0})=(0,u_{0})$ and $r(\cdot;u_{0})$ intersects each half-axis transversally, there clearly is some $0<\tau_{1}<\min\lbrace\tau_{0},\epsilon/2\rbrace$ such that $u(\tau_{0}+t;u_{0})<0<u(\tau_{0}-t;u_{0})$
     for all $0< t\leq \tau_{1}$. Fix now any $0<\epsilon_{1}<\epsilon$ satisfying both
     \begin{equation*}
       \epsilon_{1}<\min\lbrace |u(\tau_{0}-\tau_{1};u_{0})|,|u(\tau_{0}+\tau_{1};u_{0})|\rbrace\quad \text{and}\quad
       \epsilon_{1}<\min\lbrace \|r(t;u_{0})\|_{\R^{2}}\mid 0\leq t\leq t+\tau_{1}\rbrace
     \end{equation*}
     where $\|\cdot\|_{\R^{2}}$ denotes the Euclidean norm in $\R^{2}$.
      Then by elementary results on continuous dependence of solutions for ordinary differential equation we find some $\delta>0$ such that for each $\tilde{u}>0$ with $|\tilde{u}-u_{0}|<\delta$ and all $0\leq t\leq \tau_{0}+\tau_{1}$ we have
  \begin{equation*}
    \max\left\lbrace |u(t;\tilde{u})-u(t;u_{0})|,|v(t;\tilde{u})-v(t;u_{0})|\right\rbrace\leq\|r(t;u_{0}) -
    r(t,\tilde{u})\|_{\R^{2}}<\epsilon_{1}.
  \end{equation*}
  Consider now any $\tilde{u}> 0$ with $|\tilde{u}-u_{0}|<\delta$ and observe that on the one hand we have
  \begin{equation*}
    \begin{aligned}
      u(\tau_{0}+\tau_{1};\tilde{u})&=u(\tau_{0}+\tau_{1};\tilde{u})-u(\tau_{0}+\tau_{1};u_{0})+
      u(\tau_{0}+\tau_{1};u_{0})\\
      &\leq \|r(\tau_{0}+\tau_{1};\tilde{u})-r(\tau_{0}+\tau_{1};u_{0})\|_{\R^{2}}+u(\tau_{0}+\tau_{1};u_{0})\\
      &<\epsilon_{1}+u(\tau_{0}+\tau_{1};u_{0})\\
      &\leq \epsilon_{1}-\epsilon_{1}\\
      &=0
    \end{aligned}
  \end{equation*}
  and on the other hand
  \begin{equation*}
    \begin{aligned}
      u(\tau_{0}-\tau_{1};\tilde{u})&=u(\tau_{0}-\tau_{1};\tilde{u})-u(\tau_{0}-\tau_{1};u_{0})+
      u(\tau_{0}-\tau_{1};u_{0})\\
      &>\epsilon_{1}-|u(\tau_{0}-\tau_{1};\tilde{u})-u(\tau_{0}-\tau_{1};u_{0})|\\
      &\geq \epsilon_{1}-\epsilon_{1}\\
      &=0.
    \end{aligned}
  \end{equation*}
  Hence, the continuity of solution $r(\cdot;\tilde{u})$ and so of its first component $u(\cdot;\tilde{u})$ implies the existence of some $\tau_{0}-\tau_{1}<T(\tilde{u})<\tau_{0}+\tau_{1}$ with $u(T(\tilde{u});\tilde{u})=0$.

 Suppose now that $T(\tilde{u})\not=\min\lbrace t>0\mid u(t;\tilde{u})=0\rbrace$; that is, suppose that there is some $0<\xi<T(\tilde{u})$ with $u(\xi;\tilde{u})=0$. Then our discussion about the solutions of Eq. \eqref{eq: ODE} would imply the existence of some $0<\zeta<T(\tilde{u})$ with $r(\zeta;\tilde{u})=(0,-\tilde{u})$. But in view of
  \begin{equation*}
    \begin{aligned}
      \epsilon_{1}^{2}&>\|r(\zeta;u_{0})-r(\zeta;\tilde{u})\|_{\R^{2}}^{2}\\
      &=(u(\zeta;u_{0})-u(\zeta;\tilde{u}))^{2}+(v(\zeta;u_{0})-v(\zeta;\tilde{u}))^{2}\\
      &=(u(\zeta;u_{0}))^{2}+(v(\zeta;u_{0})+\tilde{u})^{2}\\
      &\geq \|r(\zeta;u_{0})\|_{\R^{2}}^{2}+\tilde{u}^{2}\\
      &\geq \epsilon_{1}^{2}+\tilde{u}^{2}
    \end{aligned}
  \end{equation*}
  that is impossible. Therefore, we have $T(\tilde{u})=\tau(\tilde{u})$ where the right-hand side is defined by \eqref{eq: period}. Moreover, it follows that
  \begin{equation*}
    |\tau(u_{0})-\tau(\tilde{u})|=|\tau_{0}-T(\tilde{u})|<\tau_{1}<\frac{\epsilon}{2}<\epsilon
  \end{equation*}
  and this proves the continuity of the function $\tau:(0,\infty)\to(0,\infty)$ defined by \eqref{eq: period}.

  The assertions about the behavior of the function $\tau$ for $t\to 0^{+}$ and $t\to\infty$ are proven in  Theorem 1 of \cite{Kaplan and Yorke 1974}.
\end{proof}

\begin{remark}
  Under the assumptions of the last statement it is possible to prove that $\tau:(0,\infty)\to(0,\infty)$ is not only continuous but continuously differentiable.  This follows from straightforward application of elementary results about smooth dependence of solutions of ordinary differential equations on initial conditions, along with the implicit function theorem.
\end{remark}

Now note that if the assumptions on $f$ from the last lemma are satisfied, and if moreover $f^{\prime}(0)<-\pi/2$ and so the zero solution of Eq. \eqref{eq: proto} is unstable, it follows that $\tau(u)<1$ for all sufficiently small $u>0$ whereas $\tau(u)>1$ for all sufficiently large $u>0$. Thus, the intermediate value theorem implies the existence of some $0<u_{0}<\infty$ with $\tau(u_{0})=1$, and so Eq. \eqref{eq: proto} has at least one Kaplan-Yorke solution.

However, the same reasoning does not work in the situation $-\pi/2<f^{\prime}(0)<0$, that is, when the zero solution of Eq. \eqref{eq: proto} is locally asymptotically stable. Indeed, in this case Lemma \ref{lem: period cont} implies that $\tau(u)>1$ for all sufficiently small as well as for all sufficiently large $u>0$, and so a straightforward application of the intermediate value theorem fails. In our next result we will give conditions on $f$ similar to those of Proposition \ref{prop: example 2 unstable period 4} but without any restrictions on $f^{\prime}(0)<0$ such that the intermediate value theorem will be applicable regardless of the stability properties of the zero solution of Eq. \eqref{eq: proto}. In doing so, we shall refer to Fig. \ref{fig: proof2}, which describes the value of the vector field for Eq. \eqref{eq: ODE} with $f$ satisfying (H'') in various regions of the first quadrant of the real plane.

\begin{figure}[htb]
\psset{unit=1.75cm}
\begin{pspicture}(-.3,-.3)(5.2,5.2)
\psaxes[labels=none,linecolor=lightgray,ticks=none]{->}(0,0)(-0.1,-0.1)(5.1,5.1)
\psline[linestyle=dotted](0,1.5)(5,1.5)
\psline[linestyle=dotted](1.5,0)(1.5,5)
\psline[linecolor=gray](1.5,0)(1.5,-0.05)
\psline[linecolor=gray](0,1.5)(-0.05,1.5)
\rput(-.15,1.5){$a$}
\rput(1.5,-.15){$a$}
\rput(5,-0.15){$u$}
\rput(-0.15,5){$v$}
\psline[linestyle=dotted](2.75,0)(2.75,5)
\psline[linestyle=dotted](0,2.75)(5,2.75)
\psline[linecolor=gray](2.75,0)(2.75,-0.05)
\psline[linecolor=gray](0,2.75)(-0.05,2.75)
\rput(2.75,-0.15){$2a-c$}
\rput(-0.4,2.75){$2a-c$}
\rput(2.125,0.75){$(f(v),\gamma)$}
\rput(0.75,2.125){$(-\gamma,-f(u))$}
\rput(2.125,2.125){$(-\gamma,\gamma)$}
\psline[linestyle=dotted](3.5,0)(3.5,5)
\psline[linestyle=dotted](0,3.5)(5,3.5)
\psline[linecolor=gray](-0.05,3.5)(0,3.5)
\psline[linecolor=gray](3.5,-0.05)(3.5,0)
\rput(4.25,4.25){$(-\delta,\delta)$}
\rput(3.5,-0.15){$2a$}
\rput(-0.2,3.5){$2a$}
\rput(4.25,0.75){$(f(v),\delta)$}
\rput(0.75,4.25){$(-\delta,-f(u))$}
\rput(4.25,2.125){$(-\gamma,\delta)$}
\rput(2.125,4.25){$(-\delta,\gamma)$}
\end{pspicture}
\caption{Value of the vector field for Eq. \eqref{eq: ODE} in various regions for $u,v\geq0$}\label{fig: proof2}
\end{figure}
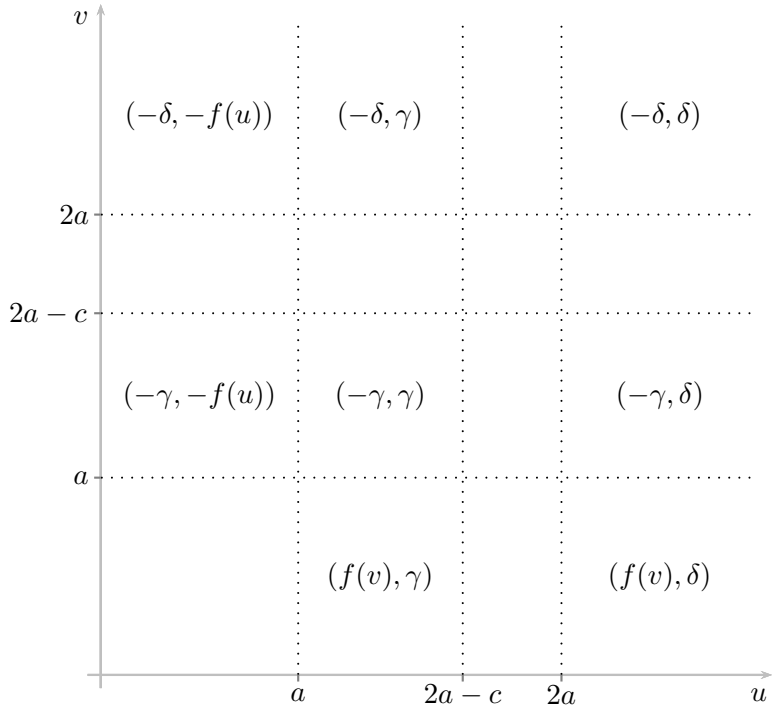

\begin{prop}\label{prop: app inter}
  Given reals $a,c,\gamma,\delta>0$ and satisfying conditions (i)--(iii), assume that $f$ is $C^{1}$-smooth, satisfies both (H) and (H''), and is constant outside some open neighborhood of $0\in\R$. If
  \begin{equation}\label{eq: condition2}
  \frac{1}{\gamma}\int_{0}^{a}|f(s)|\,ds<a-c
  \end{equation}
  then $\tau(2a-c)<1$ with $\tau(2a-c)$ defined by \eqref{eq: period}.
\end{prop}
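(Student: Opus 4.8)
The plan is to follow the orbit $r(\cdot;2a-c)$ of Eq.~\eqref{eq: ODE} as it runs counterclockwise across the closed first quadrant from $(2a-c,0)$ until the first time $\tau(2a-c)$ at which $u$ vanishes --- at which instant, by the symmetry of Eq.~\eqref{eq: ODE} recorded above, it sits at $(0,2a-c)$ --- and to show the elapsed time is at most $3a/\gamma$, hence at most $3/4<1$ by condition (ii). First I would set $\alpha:=H(2a-c,0)=\int_0^{2a-c}|f(s)|\,ds$; since $f$ satisfies (H) and is constant outside a neighborhood of the origin, the orbit is the simple closed curve $H^{-1}(\{\alpha\})$ around $0$, and along the sub-arc from $(2a-c,0)$ to $(0,2a-c)$ the component $v$ is nondecreasing (as $v'=-f(u)\ge0$ while $u\ge0$) while $u$ is strictly decreasing (as $u'=f(v)<0$ once $v>0$).

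The next step is to pin down where this sub-arc crosses the diagonal $\{u=v\}$. Since $H(u,v)=H(v,u)$, the curve $H^{-1}(\{\alpha\})$ is invariant under reflection in the diagonal; combining this with uniqueness of solutions of Eq.~\eqref{eq: ODE} and the substitution $t\mapsto\tau(2a-c)-t$ gives that $r(t;2a-c)$ is, for every $t$, the diagonal reflection of $r(\tau(2a-c)-t;2a-c)$. Because $s\mapsto H(s,s)$ is strictly increasing on $[0,\infty)$, the whole curve meets the diagonal in exactly the two points $\pm(m,m)$ with $\int_0^m|f(s)|\,ds=\alpha/2$; so the sub-arc, being contained in the closed first quadrant and joining a point below the diagonal to one above it, meets the diagonal exactly once, at $(m,m)$, and by the reflection identity together with injectivity of $r(\cdot;2a-c)$ on a single period this happens at time $\tfrac12\tau(2a-c)$.

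It then remains to estimate $m$ and the time to reach $(m,m)$. Splitting $\int_0^{2a-c}|f| = \int_0^a|f| + \gamma(a-c)$ (using (H'')) gives $\int_0^m|f| = \tfrac12\int_0^a|f| + \tfrac12\gamma(a-c)$; since $g(s):=\int_0^s|f|$ is strictly increasing, hypothesis \eqref{eq: condition2} --- i.e.\ $\int_0^a|f|<\gamma(a-c)$ --- forces $m>a$, while $m\le a+\tfrac12(a-c)<2a$, so both coordinates of $(m,m)$ lie in $[a,2a-c]$. Now along the sub-arc from $(2a-c,0)$ to $(m,m)$ the monotonicity above keeps $u$ in $[m,2a-c]\subset[a,2a-c]$, where $f\equiv-\gamma$, so $v'\equiv\gamma$ there; hence $v(t)=\gamma t$ and $v=m$ is reached at time $m/\gamma$. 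Therefore $\tau(2a-c)=2m/\gamma<3a/\gamma\le3/4<1$ by condition (ii).

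I expect the only subtle point to be the reflection argument in the second step --- checking that $t\mapsto$ (diagonal reflection of $r(\tau(2a-c)-t;2a-c)$) is again the solution starting at $(2a-c,0)$, and deducing from the simple-closed-curve property that the diagonal is met exactly once inside the first quadrant and at the halfway time. After the crossing point $(m,m)$ has been identified, everything else, in particular the way \eqref{eq: condition2} produces $m>a$ and condition (ii) produces $\tau(2a-c)<1$, is routine.
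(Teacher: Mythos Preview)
Your argument is correct, and it reaches the same bound $\tau(2a-c)<3a/\gamma\le 3/4$ as the paper, but it is organized differently. The paper tracks the orbit through three explicit phases: first it integrates while $0\le v\le a$ and $a\le u\le 2a-c$ (so $v'=\gamma$, $v(t)=\gamma t$, and $u(t)=2a-c+\tfrac1\gamma\int_0^{\gamma t}f$), uses \eqref{eq: condition2} to see that at $t=a/\gamma$ the point still has $u\in[a,2a-c]$; then it crosses the square $[a,2a-c]^2$ with constant velocity $(-\gamma,\gamma)$, taking fewer than $(a-c)/\gamma$ units; finally it invokes the diagonal symmetry to say the last leg to $(0,2a-c)$ takes another $a/\gamma$. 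Summing gives $\tau(2a-c)<\tfrac{a+(a-c)+a}{\gamma}$.

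You instead push the diagonal symmetry to the front: you identify the unique first-quadrant diagonal crossing $(m,m)$ of the level curve, show via the reflection $t\mapsto\tau(2a-c)-t$ that it occurs exactly at the half-time, and then observe that on the whole first half $u$ stays in $[m,2a-c]\subset[a,2a-c]$ so that $v'\equiv\gamma$ and $\tau(2a-c)=2m/\gamma$ exactly. Solving $\int_0^m|f|=\tfrac12\alpha$ gives $m=a+\tfrac12\bigl[(a-c)-\tfrac1\gamma\int_0^a|f|\bigr]$, hence in fact $\tau(2a-c)=\tfrac{3a-c}{\gamma}-\tfrac1{\gamma^2}\int_0^a|f|$, which refines the paper's strict inequality to an identity. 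The cost is the extra care you flag in justifying that the diagonal is met exactly once on the sub-arc and at the half-time; the paper avoids this by its more pedestrian three-phase bookkeeping, while your route is cleaner once that symmetry step is secured.
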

\begin{proof}
  Under the given assumptions, consider the periodic solution $r(\cdot;2a-c):\R\to\R^{2}$ of Eq. \eqref{eq: ODE} with initial condition $r(0;2a-c)=(2a-c,0)$. We have
  \begin{equation*}
    r^{\prime}(0;2a-c)=\left(u^{\prime}(0;2a-c), v^{\prime}(0;2a-c)\right)=\left(
      f(0), -f(2a-c)
    \right)=\left(
      0, \gamma
    \right)
  \end{equation*}
  and $u^{\prime}(t;2a-c)<0$ for all sufficiently small $t>0$. Hence, as $t$ moves forward, as long as $a\leq u(t)\leq 2a-c$ and $0\leq v(t)\leq a$, we will have
  \begin{equation*}
    \left(
      u^{\prime}(t;2a-c),v^{\prime}(t;2a-c)
    \right)=\left(
      f(v(t;2a-c)), \gamma
    \right).
  \end{equation*}
  Thus, for all such $t$, we actually have $v(t;2a-c)=\gamma\,t$ and
  \begin{equation*}
    u(t;2a-c)=u(0;2a-c)+\int_{0}^{t}f(\gamma\,s)\,ds=2a-c+\frac{1}{\gamma}\int_{0}^{\gamma t}f(s)\,ds.
  \end{equation*}
  Now condition \eqref{eq: condition2} implies that $a\leq u(t;2a-c)\leq 2a-c$ for all $0\leq t\leq a/\gamma$, and so at precisely $t=a/\gamma$ we have
  \begin{equation*}
    u(t;2a-c)=2a-c+\frac{1}{\gamma}\int_{0}^{a}f(s)\,ds\in [a,2a-c]\quad\text{and}\quad v(t;2a-c)=a.
  \end{equation*}
  The solution $r(\cdot;2a-c)$ will now move diagonally across the ``box'' $[a,2a-c]\times[a,2a-c]$, with constant velocity
  \begin{equation*}
  r^{\prime}(t;2a-c)=\left(
    u^{\prime}(t;2a-c), v^{\prime}(t;2a-c)
  \right)=\left(
    -\gamma, \gamma
  \right).
  \end{equation*}
  This transversal will clearly take fewer than $(a-c)/\gamma$ time units. Then, by symmetry, the solution $r(\cdot;2a-c)$ will take another $a/\gamma$ units to reach $r(t;2a-c)=(0,2a-c)$. Therefore we have
  \begin{equation*}
    \tau(2a-c) < \frac{a+(a-c)+a}{\gamma}<\frac{3a}{\gamma}<1
  \end{equation*}
  by condition (ii), and this finishes the proof.
\end{proof}

A direct consequence of the last result is now the following corollary.
\begin{cor}\label{cor: maxvalues}
  If the hypotheses on $f$ described in Proposition \ref{prop: app inter} (condition \eqref{eq: condition2} included) hold, then Eq. \eqref{eq: proto} has a Kaplan-Yorke solution with maximal value larger than $2a-c$.

  If furthermore $f^{\prime}(0) > -\pi/2$, then Eq. \eqref{eq: proto} also has a Kaplan-Yorke solution with maximum value less than $2a-c$.
\end{cor}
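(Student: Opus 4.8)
The plan is to feed the inequality $\tau(2a-c)<1$ supplied by Proposition \ref{prop: app inter} into the intermediate value theorem for the continuous period function $\tau:(0,\infty)\to(0,\infty)$ of Lemma \ref{lem: period cont}, in the same spirit as the unstable-case argument given just before the corollary, but now anchoring the argument at $u=2a-c$ instead of near the origin. For the first assertion, combine $\tau(2a-c)<1$ with $\lim_{u\to\infty}\tau(u)=\infty$ from Lemma \ref{lem: period cont} to produce, via continuity, some $u_{0}>2a-c$ with $\tau(u_{0})=1$. Then $r(\cdot;u_{0})$ is a periodic solution of Eq. \eqref{eq: ODE} with minimal period $4\tau(u_{0})=4$, so $x(t):=u(t;u_{0})$ is a Kaplan-Yorke solution of Eq. \eqref{eq: proto}.

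It then remains only to see that the maximal value of this Kaplan-Yorke solution is exactly $u_{0}$, and hence larger than $2a-c$. Its orbit is the level set $H^{-1}(\{\alpha\})$ with $\alpha=-\int_{0}^{u_{0}}f(s)\,ds$, which passes through $(u_{0},0)$. For every $(u,v)$ on this orbit the negative feedback condition forces $-\int_{0}^{v}f(s)\,ds\geq 0$, whence $H(u,v)\geq-\int_{0}^{u}f(s)\,ds$; since $u\mapsto-\int_{0}^{u}f(s)\,ds$ is strictly increasing on $[0,\infty)$, the equation $H(u,v)=\alpha$ cannot hold for $u>u_{0}$, so $u(t;u_{0})\leq u_{0}$ for all $t$, with equality at $t=0$. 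For the second assertion, assume in addition $f'(0)\in(-\pi/2,0)$; then $-1/f'(0)>2/\pi$, so the limit $\lim_{u\to 0^{+}}\tau(u)=-\frac{\pi}{2}\frac{1}{f'(0)}$ of Lemma \ref{lem: period cont} exceeds $1$, giving $\tau(u)>1$ for all sufficiently small $u>0$. Continuity of $\tau$ together with $\tau(2a-c)<1$ then yields $u_{0}\in(0,2a-c)$ with $\tau(u_{0})=1$, and the previous paragraph shows the corresponding Kaplan-Yorke solution has maximal value $u_{0}<2a-c$.

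Essentially no step here is hard: once Proposition \ref{prop: app inter} and Lemma \ref{lem: period cont} are in hand the corollary amounts to two applications of the intermediate value theorem. The only point demanding a little care is the identification of the maximal value of the Kaplan-Yorke solution $u(\cdot;u_{0})$ with the initial abscissa $u_{0}$, which is the short monotonicity argument for $H$ indicated above; this identification is precisely what lets us pin down on which side of $2a-c$ the resulting solution lies.
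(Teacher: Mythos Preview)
Your proof is correct and follows essentially the same route as the paper: both invoke Proposition \ref{prop: app inter} for $\tau(2a-c)<1$, use the asymptotics of $\tau$ from Lemma \ref{lem: period cont} at infinity (and, under the extra hypothesis, at zero), and apply the intermediate value theorem twice. The only difference is that you supply an explicit Hamiltonian monotonicity argument to identify $\max_{t}u(t;u_{0})$ with $u_{0}$, whereas the paper simply asserts $\max_{t\in\R}x(t)=u_{0}$, relying on the symmetry discussion of the orbit $H^{-1}(\{\alpha\})$ already given before Lemma \ref{lem: period cont}.
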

\begin{proof}
  Under the given conditions, Lemma \ref{lem: period cont} together with the last proposition show that for the continuous map $\tau:(0,\infty)\to (0,\infty)$ defined by \eqref{eq: period} we have $\tau(2a-c)<1$ whereas $\tau(u)>1$ for all sufficiently large $u>2a-c$. Hence, there clearly exists some $2a-c<u_{0}<\infty$ with $\tau(u_{0})=1$ such that $x:\R\to\R$ with $x(t):=u(t;u_{0})$ for $t\in\R$ forms a Kaplan-Yorke solution of Eq. \eqref{eq: proto} with $\max_{t\in\R}x(t)=u_{0}>2a-c$, as claimed.

Under the additional assumption $f^{\prime}(0)> -\pi/2$, Lemma \ref{lem: period cont} also proves that $\tau(u)>1$ for all sufficiently small $u>0$. Hence, apart from $2a-c<u_{0}<\infty$, there is some $0<\tilde{u}_{0}<2a-c$ with $\tau(\tilde{u}_{0})=1$. If follows that $\tilde{x}:\R\to\R$ defined by $\tilde{x}(t):=u(t;\tilde{u}_{0})$ for all $t\in\R$ is as well a Kaplan-Yorke solution of Eq. \eqref{eq: proto} but with maximum value $\max_{t\in\R}\tilde{x}(t)=\tilde{u}_{0}<2a-c$.
\end{proof}

Let us consider an example.  We take $f$ as described before Proposition \ref{prop: example 2}, with $\gamma =4$, $a = 1$, $\delta = 2/3$, and $c = 1/20$.  Note that conditions $(i)$ -- $(iii)$ are satisfied.  In addition to hypotheses (\ref{eq: assumptions 3}), we also take $f(x) = -2x$ on $[-a+c,a-c]$ (so the zero solution is unstable).  Now, Proposition \ref{prop: app inter} shows that Eq. \eqref{eq: proto} will have a Kapan-Yorke solution $q$ with maximum value greater than $2a-c$.   On the other hand, in this case a solution $r = (u,v)$ of Eq. \eqref{eq: ODE} with initial condition $(3a,0) = (3,0)$ will have period strictly greater than $4$.  Referring once more to Fig. \ref{fig: proof2}, we see that that as long as $(u(t),v(t))$ is in the ``box'' bounded by the lines $y = 0$, $y = a-c$, $x = 2a$, and $x = \gamma$, we will have $v'(t) = \delta$ and $v(t) = \delta t = 2t/3$, and
\[
u'(t) = f(v(t)) = -2v(t) = -2\delta t, \ \ u(t) = 3 - \delta t^2 = 3 - \frac{2}{3}t^2.
\]
Since $c = 1/20$, we see that $r(1)$ is still in the above-described ``box,'' and hence that $r$ has period strictly greater than $4$.  We conclude that our Kaplan-Yorke solution $q$ has a trace in the $(x(t),x(t-1))$ plane going through the points $(\alpha,0)$ and $(0,\alpha)$, where $2a - c < \alpha < 3a$.

On the other hand, the proof of Proposition \ref{prop: example 2} shows that the slowly oscillating periodic solution $p$ found there satisfies $|p'(z)| = \gamma$ and $|p(z-1)| \in [a,2a-c]$ whenever $p(z) = 0$, and attains a maximum value of greater than $3a$.   Thus the trace of $p$ in the $(x(t),x(t-1))$ plane is a simple closed curve going through the points $(\alpha_2,0)$ and $(0,\alpha_1)$, where $\alpha_1 < \alpha < \alpha_2$.  Thus the traces of $q$ and $p$ intersect in the $(x(t),x(t-1))$ plane; such an intersection of the traces of two SOP solutions is not possible when $f$ is monotonic (again, see \cite{Kaplan and Yorke 1975}).

The figures below show numerical approximations of two solutions $q$ and $p$ as described just above.  The first figure shows the graphs of the two solutions; the second shows the solutions' traces in the plane.  (In making the approximations, we have taken  $f$ linear on the intervals $[a-c,a]$ and $[2a-c,2a]$, and so there are isolated points where $f$ is not smooth.)
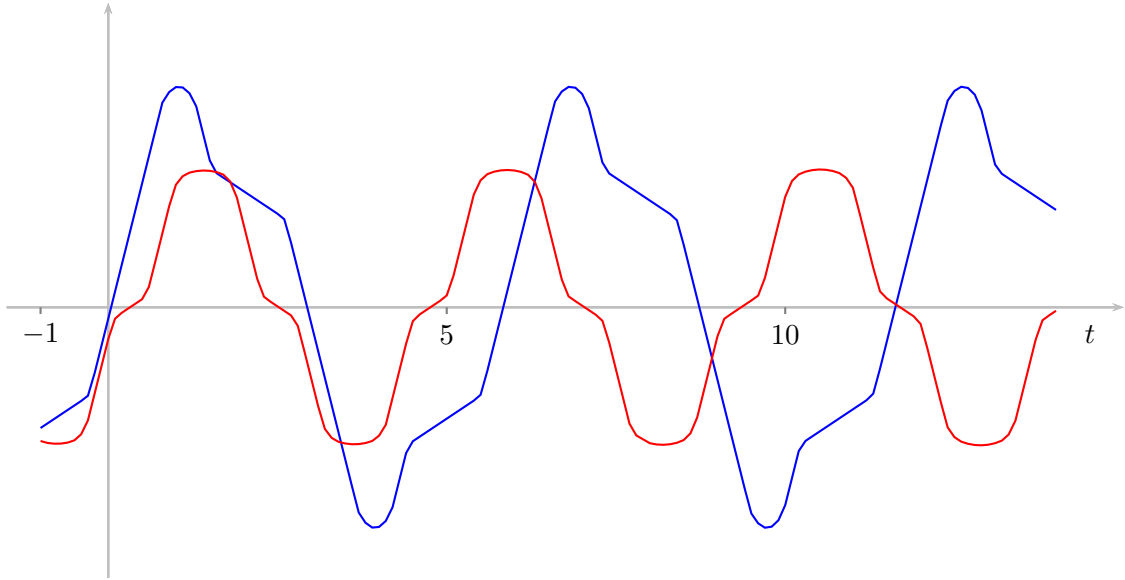
\begin{figure}[htb]
	\centering
	\psset{unit=0.9cm}
	\begin{pspicture}(-2,-4.)(15,5.)
	\psaxes[labels=none,linecolor=lightgray,ticks=none]{->}(0,0)(-1.5,-4)(15.,4.5)
	\rput(-1.,-0.4){$-1$}
	\psline[linecolor=gray](-1.0,0)(-1.0,-0.1)
	\rput(5,-0.4){$5$}
	\psline[linecolor=gray](5.0,0)(5.0,-0.1)
	\rput(10,-0.4){$10$}
	\psline[linecolor=gray](10,0)(10,-0.1)
	\rput(14.5,-0.4){$t$}
	
	\psline[linecolor=blue](-1,-1.78522)
	(-0.9,-1.71855)
	(-0.8,-1.65188)
	(-0.7,-1.58522)
	(-0.6,-1.51855)
	(-0.5,-1.45188)
	(-0.4,-1.38522)
	(-0.3,-1.30767)
	(-0.2,-0.9698)
	(-0.1,-0.5698)
	(0,-0.1698)
	(0.1,0.2302)
	(0.2,0.6302)
	(0.3,1.0302)
	(0.4,1.4302)
	(0.5,1.8302)
	(0.6,2.2302)
	(0.7,2.6302)
	(0.8,3.02548)
	(0.9,3.18148)
	(1,3.25547)
	(1.1,3.24947)
	(1.2,3.16347)
	(1.3,2.9702)
	(1.4,2.5702)
	(1.5,2.1702)
	(1.6,1.9827)
	(1.7,1.91603)
	(1.8,1.84937)
	(1.9,1.7827)
	(2,1.71603)
	(2.1,1.64937)
	(2.2,1.5827)
	(2.3,1.51603)
	(2.4,1.44937)
	(2.5,1.3827)
	(2.6,1.30113)
	(2.7,0.9547)
	(2.8,0.5547)
	(2.9,0.1547)
	(3,-0.2453)
	(3.1,-0.6453)
	(3.2,-1.0453)
	(3.3,-1.4453)
	(3.4,-1.8453)
	(3.5,-2.2453)
	(3.6,-2.6453)
	(3.7,-3.03462)
	(3.8,-3.18572)
	(3.9,-3.2567)
	(4,-3.24768)
	(4.1,-3.15866)
	(4.2,-2.9551)
	(4.3,-2.5551)
	(4.4,-2.1551)
	(4.5,-1.98018)
	(4.6,-1.91352)
	(4.7,-1.84685)
	(4.8,-1.78018)
	(4.9,-1.71352)
	(5,-1.64685)
	(5.1,-1.58018)
	(5.2,-1.51352)
	(5.3,-1.44685)
	(5.4,-1.38018)
	(5.5,-1.29395)
	(5.6,-0.9396)
	(5.7,-0.5396)
	(5.8,-0.1396)
	(5.9,0.2604)
	(6,0.6604)
	(6.1,1.0604)
	(6.2,1.4604)
	(6.3,1.8604)
	(6.4,2.2604)
	(6.5,2.6604)
	(6.6,3.04189)
	(6.7,3.18985)
	(6.8,3.25781)
	(6.9,3.24577)
	(7,3.15373)
	(7.1,2.94)
	(7.2,2.54)
	(7.3,2.14)
	(7.4,1.97767)
	(7.5,1.911)
	(7.6,1.84433)
	(7.7,1.77767)
	(7.8,1.711)
	(8,1.57767)
	(8.1,1.511)
	(8.2,1.44433)
	(8.3,1.37767)
	(8.4,1.28614)
	(8.5,0.9245)
	(8.6,0.5245)
	(8.7,0.1245)
	(8.8,-0.2755)
	(8.9,-0.6755)
	(9,-1.0755)
	(9.1,-1.4755)
	(9.2,-1.8755)
	(9.3,-2.2755)
	(9.4,-2.6755)
	(9.5,-3.04893)
	(9.6,-3.19387)
	(9.7,-3.25881)
	(9.8,-3.24375)
	(9.9,-3.14869)
	(10,-2.9249)
	(10.1,-2.5249)
	(10.2,-2.1249)
	(10.3,-1.97515)
	(10.4,-1.90848)
	(10.5,-1.84182)
	(10.6,-1.77515)
	(10.7,-1.70848)
	(10.8,-1.64182)
	(10.9,-1.57515)
	(11,-1.50848)
	(11.1,-1.44182)
	(11.2,-1.37515)
	(11.3,-1.27769)
	(11.4,-0.9094)
	(11.5,-0.5094)
	(11.6,-0.1094)
	(11.7,0.2906)
	(11.8,0.6906)
	(11.9,1.0906)
	(12,1.4906)
	(12.1,1.8906)
	(12.2,2.2906)
	(12.3,2.6906)
	(12.4,3.05586)
	(12.5,3.19778)
	(12.6,3.2597)
	(12.7,3.24162)
	(12.8,3.14354)
	(12.9,2.9098)
	(13,2.5098)
	(13.1,2.1098)
	(13.2,1.97263)
	(13.3,1.90597)
	(13.4,1.8393)
	(13.5,1.77263)
	(13.6,1.70597)
	(13.7,1.6393)
	(13.8,1.57263)
	(13.9,1.50597)
	(14,1.4393)
	\psline[linecolor=red](-1,-1.9772)
	(-0.9,-2.00489)
	(-0.8,-2.01679)
	(-0.7,-2.01536)
	(-0.6,-2.00059)
	(-0.5,-1.96708)
	(-0.4,-1.87722)
	(-0.3,-1.67191)
	(-0.2,-1.27191)
	(-0.1,-0.87191)
	(0,-0.47191)
	(0.1,-0.16963)
	(0.2,-0.08139)
	(0.3,-0.01473)
	(0.4,0.05194)
	(0.5,0.11885)
	(0.6,0.30055)
	(0.7,0.69436)
	(0.8,1.09436)
	(0.9,1.49436)
	(1,1.81227)
	(1.1,1.93746)
	(1.2,1.99037)
	(1.3,2.01275)
	(1.4,2.02153)
	(1.5,2.01699)
	(1.6,1.99911)
	(1.7,1.96063)
	(1.8,1.86099)
	(1.9,1.62236)
	(2,1.22236)
	(2.1,0.82236)
	(2.2,0.42236)
	(2.3,0.15587)
	(2.4,0.08009)
	(2.5,0.01342)
	(2.6,-0.05325)
	(2.7,-0.11991)
	(2.8,-0.27533)
	(2.9,-0.66049)
	(3,-1.06049)
	(3.1,-1.46049)
	(3.2,-1.79836)
	(3.3,-1.93046)
	(3.4,-1.9885)
	(3.5,-2.01348)
	(3.6,-2.02476)
	(3.7,-2.02271)
	(3.8,-2.00733)
	(3.9,-1.97643)
	(4,-1.89851)
	(4.1,-1.73662)
	(4.2,-1.3382)
	(4.3,-0.9382)
	(4.4,-0.5382)
	(4.5,-0.20444)
	(4.6,-0.10591)
	(4.7,-0.03924)
	(4.8,0.02742)
	(4.9,0.09409)
	(5,0.17481)
	(5.1,0.46647)
	(5.2,0.86647)
	(5.3,1.26647)
	(5.4,1.66647)
	(5.5,1.8768)
	(5.6,1.96818)
	(5.7,2.00462)
	(5.8,2.02366)
	(5.9,2.02936)
	(6,2.02173)
	(6.1,2.00077)
	(6.2,1.95972)
	(6.3,1.85725)
	(6.4,1.60942)
	(6.5,1.20942)
	(6.6,0.80942)
	(6.7,0.40942)
	(6.8,0.16097)
	(6.9,0.0906)
	(7,0.02393)
	(7.1,-0.04273)
	(7.2,-0.1094)
	(7.3,-0.19973)
	(7.4,-0.5214)
	(7.5,-0.9214)
	(7.6,-1.3214)
	(7.7,-1.7214)
	(7.8,-1.89449)
	(8,-2.0096)
	(8.1,-2.02803)
	(8.2,-2.03312)
	(8.3,-2.02488)
	(8.4,-2.00331)
	(8.5,-1.9631)
	(8.6,-1.86409)
	(8.7,-1.62769)
	(8.8,-1.22769)
	(8.9,-0.82769)
	(9,-0.42769)
	(9.1,-0.17031)
	(9.2,-0.09949)
	(9.3,-0.03282)
	(9.4,0.03384)
	(9.5,0.10051)
	(9.6,0.17191)
	(9.7,0.43299)
	(9.8,0.83299)
	(9.9,1.23299)
	(10,1.63299)
	(10.1,1.86639)
	(10.2,1.96452)
	(10.3,2.00503)
	(10.4,2.02756)
	(10.5,2.03676)
	(10.6,2.03263)
	(10.7,2.01517)
	(10.8,1.98368)
	(10.9,1.91321)
	(11,1.76583)
	(11.1,1.38585)
	(11.2,0.98585)
	(11.3,0.58585)
	(11.4,0.2352)
	(11.5,0.13143)
	(11.6,0.06476)
	(11.7,-0.0019)
	(11.8,-0.06857)
	(11.9,-0.13524)
	(12,-0.24731)
	(12.1,-0.60789)
	(12.2,-1.00789)
	(12.3,-1.40789)
	(12.4,-1.77642)
	(12.5,-1.91954)
	(12.6,-1.98677)
	(12.7,-2.01829)
	(12.8,-2.03611)
	(12.9,-2.0406)
	(13,-2.03175)
	(13.1,-2.00957)
	(13.2,-1.9714)
	(13.3,-1.88168)
	(13.4,-1.6777)
	(13.5,-1.2777)
	(13.6,-0.8777)
	(13.7,-0.4777)
	(13.8,-0.19289)
	(13.9,-0.11875)
	(14,-0.05208)
	\end{pspicture}
	\caption{Graphs of $p$ and $q$}\label{fig: 4}
\end{figure}

\begin{figure}[htb]
	\centering
	\psset{unit=1.25cm}
	\begin{pspicture}(-4,-3.75)(4.2,4)
	\psaxes[labels=none,linecolor=lightgray,ticks=none]{->}(0,0)(-4,-3.75)(4.2,4)
	\rput(1.,-0.4){$1$}
	\psline[linecolor=gray](1.0,0)(1.0,-0.1)
	\rput(-0.4,1){$1$}
	\psline[linecolor=gray](-1.0,0)(-1.0,-0.1)
	\rput(-1,-0.4){$-1$}
	\rput(-0.4,-1){$-1$}
	\rput(-0.4,-3){$-3$}
	\psline[linecolor=gray](-0.1,-3)(0,-3)
	\psline[linecolor=gray](-0.1,1)(0,1)
\psline[linecolor=gray](-0.1,-1)(0,-1)
	\psline[linecolor=gray](-0.1,3)(0,3)
	\psline[linecolor=gray](3,-0.1)(3,0)
	\rput(-3,-0.4){$-3$}
	\psline[linecolor=gray](-3,-0.1)(-3,0)
	\rput(3,-0.4){$3$}
	\rput(-0.4,3){$3$}
	\rput(3.9,-0.4){$x(t)$}
	\rput(-0.6,3.7){$x(t-1)$}
	\psline[linecolor=red](2.006,0)
	(2.00594,0.0066)
	(2.00574,0.01327)
	(2.00541,0.01993)
	(2.00494,0.0266)
	(2.00434,0.03327)
	(2.00361,0.03993)
	(2.00275,0.0466)
	(2.00175,0.05327)
	(2.00062,0.05993)
	(1.99935,0.0667)
	(1.99795,0.07426)
	(1.99638,0.0828)
	(1.99463,0.09245)
	(1.99267,0.10333)
	(1.99049,0.11559)
	(1.98804,0.12939)
	(1.9853,0.14492)
	(1.98224,0.16237)
	(1.9788,0.18199)
	(1.97494,0.20404)
	(1.97062,0.22881)
	(1.96578,0.25663)
	(1.96034,0.28787)
	(1.95424,0.32295)
	(1.9474,0.36203)
	(1.93977,0.40203)
	(1.93133,0.44203)
	(1.92209,0.48203)
	(1.91206,0.52203)
	(1.90122,0.56203)
	(1.88958,0.60203)
	(1.87715,0.64203)
	(1.86391,0.68203)
	(1.84987,0.72203)
	(1.83504,0.76203)
	(1.8194,0.80203)
	(1.80296,0.84203)
	(1.78573,0.88203)
	(1.76769,0.92203)
	(1.74815,0.96203)
	(1.7158,1.00203)
	(1.6758,1.04203)
	(1.6358,1.08203)
	(1.5958,1.12203)
	(1.5558,1.16203)
	(1.5158,1.20203)
	(1.4758,1.24203)
	(1.4358,1.28203)
	(1.3958,1.32203)
	(1.3558,1.36203)
	(1.3158,1.40203)
	(1.2758,1.44203)
	(1.2358,1.48203)
	(1.1958,1.52203)
	(1.1558,1.56203)
	(1.1158,1.60203)
	(1.0758,1.64203)
	(1.0358,1.68203)
	(0.9958,1.72195)
	(0.9558,1.75187)
	(0.9158,1.77077)
	(0.8758,1.78869)
	(0.8358,1.80581)
	(0.7958,1.82213)
	(0.7558,1.83765)
	(0.7158,1.85237)
	(0.6758,1.86629)
	(0.6358,1.87941)
	(0.5958,1.89173)
	(0.5558,1.90325)
	(0.5158,1.91397)
	(0.4758,1.92389)
	(0.4358,1.93301)
	(0.3958,1.94133)
	(0.3558,1.94885)
	(0.31736,1.95558)
	(0.2831,1.96158)
	(0.25262,1.96693)
	(0.22551,1.97171)
	(0.20142,1.97598)
	(0.18002,1.97979)
	(0.16103,1.9832)
	(0.14419,1.98625)
	(0.12928,1.98898)
	(0.11609,1.99144)
	(0.10445,1.99364)
	(0.09421,1.99562)
	(0.08523,1.99742)
	(0.07739,1.99904)
	(0.07052,2.00052)
	(0.06385,2.00187)
	(0.05718,2.00308)
	(0.05052,2.00415)
	(0.04385,2.0051)
	(0.03718,2.00591)
	(0.03052,2.00659)
	(0.02385,2.00713)
	(0.01718,2.00754)
	(0.01052,2.00782)
	(0.00385,2.00797)
	(-0.00282,2.00798)
	(-0.00948,2.00785)
	(-0.01615,2.0076)
	(-0.02282,2.00721)
	(-0.02948,2.00669)
	(-0.03615,2.00603)
	(-0.04282,2.00524)
	(-0.04948,2.00432)
	(-0.05615,2.00326)
	(-0.06282,2.00208)
	(-0.06948,2.00075)
	(-0.07626,1.9993)
	(-0.08391,1.9977)
	(-0.09269,1.99593)
	(-0.1027,1.99398)
	(-0.11408,1.99182)
	(-0.12698,1.98941)
	(-0.14157,1.98673)
	(-0.15805,1.98374)
	(-0.17664,1.9804)
	(-0.19758,1.97666)
	(-0.22116,1.97248)
	(-0.24768,1.9678)
	(-0.27751,1.96256)
	(-0.31104,1.95668)
	(-0.34872,1.9501)
	(-0.38872,1.94273)
	(-0.42872,1.93456)
	(-0.46872,1.92559)
	(-0.50872,1.91582)
	(-0.54872,1.90525)
	(-0.58872,1.89388)
	(-0.62872,1.8817)
	(-0.66872,1.86873)
	(-0.70872,1.85496)
	(-0.74872,1.84039)
	(-0.78872,1.82502)
	(-0.82872,1.80885)
	(-0.86872,1.79188)
	(-0.90872,1.77411)
	(-0.94872,1.75554)
	(-0.98872,1.72875)
	(-1.02872,1.68945)
	(-1.06872,1.64945)
	(-1.10872,1.60945)
	(-1.14872,1.56945)
	(-1.18872,1.52945)
	(-1.22872,1.48945)
	(-1.26872,1.44945)
	(-1.30872,1.40945)
	(-1.34872,1.36945)
	(-1.38872,1.32945)
	(-1.42872,1.28945)
	(-1.46872,1.24945)
	(-1.50872,1.20945)
	(-1.54872,1.16945)
	(-1.58872,1.12945)
	(-1.62872,1.08945)
	(-1.66872,1.04945)
	(-1.70872,1.00945)
	(-1.74388,0.96945)
	(-1.7648,0.92945)
	(-1.783,0.88945)
	(-1.80039,0.84945)
	(-1.81698,0.80945)
	(-1.83277,0.76945)
	(-1.84777,0.72945)
	(-1.86196,0.68945)
	(-1.87535,0.64945)
	(-1.88795,0.60945)
	(-1.89974,0.56945)
	(-1.91073,0.52945)
	(-1.92092,0.48945)
	(-1.93032,0.44945)
	(-1.93891,0.40945)
	(-1.9467,0.36945)
	(-1.9537,0.33011)
	(-1.95994,0.29468)
	(-1.96552,0.26318)
	(-1.9705,0.2352)
	(-1.97495,0.21036)
	(-1.97894,0.18834)
	(-1.98251,0.16883)
	(-1.98571,0.15158)
	(-1.98859,0.13635)
	(-1.99118,0.12295)
	(-1.99352,0.11119)
	(-1.99564,0.10091)
	(-1.99757,0.09198)
	(-1.99933,0.08428)
	(-2.00095,0.07752)
	(-2.00243,0.07086)
	(-2.00378,0.06419)
	(-2.005,0.05752)
	(-2.00608,0.05086)
	(-2.00703,0.04419)
	(-2.00785,0.03752)
	(-2.00854,0.03086)
	(-2.00909,0.02419)
	(-2.00951,0.01752)
	(-2.00979,0.01086)
	(-2.00994,0.00419)
	(-2.00996,-0.00248)
	(-2.00984,-0.00914)
	(-2.00959,-0.01581)
	(-2.00921,-0.02248)
	(-2.0087,-0.02914)
	(-2.00805,-0.03581)
	(-2.00727,-0.04248)
	(-2.00635,-0.04914)
	(-2.0053,-0.05581)
	(-2.00412,-0.06248)
	(-2.0028,-0.06914)
	(-2.00135,-0.07581)
	(-1.99977,-0.08249)
	(-1.99805,-0.08987)
	(-1.99617,-0.09844)
	(-1.99411,-0.10833)
	(-1.99183,-0.11967)
	(-1.98931,-0.1326)
	(-1.98652,-0.14729)
	(-1.98341,-0.16396)
	(-1.97995,-0.1828)
	(-1.97609,-0.20409)
	(-1.97177,-0.2281)
	(-1.96695,-0.25514)
	(-1.96155,-0.28559)
	(-1.9555,-0.31985)
	(-1.94873,-0.3583)
	(-1.94117,-0.3983)
	(-1.93281,-0.4383)
	(-1.92365,-0.4783)
	(-1.91369,-0.5183)
	(-1.90292,-0.5583)
	(-1.89136,-0.5983)
	(-1.879,-0.6383)
	(-1.86584,-0.6783)
	(-1.85188,-0.7183)
	(-1.83711,-0.7583)
	(-1.82155,-0.7983)
	(-1.80519,-0.8383)
	(-1.78803,-0.8783)
	(-1.77006,-0.9183)
	(-1.75097,-0.9583)
	(-1.72017,-0.9983)
	(-1.68019,-1.0383)
	(-1.64019,-1.0783)
	(-1.60019,-1.1183)
	(-1.56019,-1.1583)
	(-1.52019,-1.1983)
	(-1.48019,-1.2383)
	(-1.44019,-1.2783)
	(-1.40019,-1.3183)
	(-1.36019,-1.3583)
	(-1.32019,-1.3983)
	(-1.28019,-1.4383)
	(-1.24019,-1.4783)
	(-1.20019,-1.5183)
	(-1.16019,-1.5583)
	(-1.12019,-1.5983)
	(-1.08019,-1.6383)
	(-1.04019,-1.6783)
	(-1.00019,-1.7183)
	(-0.96019,-1.75007)
	(-0.92019,-1.76941)
	(-0.88019,-1.78742)
	(-0.84019,-1.80463)
	(-0.80019,-1.82104)
	(-0.76019,-1.83664)
	(-0.72019,-1.85145)
	(-0.68019,-1.86546)
	(-0.64019,-1.87867)
	(-0.60019,-1.89108)
	(-0.56019,-1.90268)
	(-0.52019,-1.91349)
	(-0.48019,-1.9235)
	(-0.44019,-1.93271)
	(-0.40019,-1.94111)
	(-0.36019,-1.94872)
	(-0.32171,-1.95554)
	(-0.28745,-1.96163)
	(-0.25703,-1.96707)
	(-0.23005,-1.97194)
	(-0.20614,-1.9763)
	(-0.18498,-1.9802)
	(-0.1663,-1.98372)
	(-0.14983,-1.98687)
	(-0.13537,-1.98973)
	(-0.12272,-1.9923)
	(-0.11171,-1.99465)
	(-0.10219,-1.99679)
	(-0.09404,-1.99875)
	(-0.08708,-2.00056)
	(-0.08042,-2.00223)
	(-0.07375,-2.00377)
	(-0.06708,-2.00518)
	(-0.06042,-2.00646)
	(-0.05375,-2.0076)
	(-0.04708,-2.00861)
	(-0.04042,-2.00949)
	(-0.03375,-2.01023)
	(-0.02708,-2.01084)
	(-0.02042,-2.01131)
	(-0.01375,-2.01166)
	(-0.00708,-2.01186)
	(-0.00042,-2.01194)
	(0.00625,-2.01188)
	(0.01292,-2.01169)
	(0.01958,-2.01137)
	(0.02625,-2.01091)
	(0.03292,-2.01032)
	(0.03958,-2.00959)
	(0.04625,-2.00874)
	(0.05292,-2.00775)
	(0.05958,-2.00662)
	(0.06625,-2.00536)
	(0.07292,-2.00397)
	(0.07958,-2.00245)
	(0.08625,-2.00079)
	(0.09309,-1.999)
	(0.10106,-1.99706)
	(0.11037,-1.99495)
	(0.12116,-1.99264)
	(0.13356,-1.9901)
	(0.14774,-1.98729)
	(0.16389,-1.98418)
	(0.18222,-1.98072)
	(0.20299,-1.97687)
	(0.22646,-1.97259)
	(0.25295,-1.9678)
	(0.28282,-1.96245)
	(0.31645,-1.95647)
	(0.35431,-1.94977)
	(0.39431,-1.94229)
	(0.43431,-1.93401)
	(0.47431,-1.92493)
	(0.51431,-1.91504)
	(0.55431,-1.90436)
	(0.59431,-1.89288)
	(0.63431,-1.8806)
	(0.67431,-1.86752)
	(0.71431,-1.85363)
	(0.75431,-1.83895)
	(0.79431,-1.82347)
	(0.83431,-1.80719)
	(0.87431,-1.79011)
	(0.91431,-1.77222)
	(0.95431,-1.75346)
	(0.99431,-1.72433)
	(1.03431,-1.68451)
	(1.07431,-1.64451)
	(1.11431,-1.60451)
	(1.15431,-1.56451)
	(1.19431,-1.52451)
	(1.23431,-1.48451)
	(1.27431,-1.44451)
	(1.31431,-1.40451)
	(1.35431,-1.36451)
	(1.39431,-1.32451)
	(1.43431,-1.28451)
	(1.47431,-1.24451)
	(1.51431,-1.20451)
	(1.55431,-1.16451)
	(1.59431,-1.12451)
	(1.63431,-1.08451)
	(1.67431,-1.04451)
	(1.71431,-1.00451)
	(1.74777,-0.96451)
	(1.76775,-0.92451)
	(1.78584,-0.88451)
	(1.80314,-0.84451)
	(1.81963,-0.80451)
	(1.83532,-0.76451)
	(1.85022,-0.72451)
	(1.86431,-0.68451)
	(1.87761,-0.64451)
	(1.8901,-0.60451)
	(1.9018,-0.56451)
	(1.91269,-0.52451)
	(1.92278,-0.48451)
	(1.93208,-0.44451)
	(1.94057,-0.40451)
	(1.94827,-0.36451)
	(1.95517,-0.32582)
	(1.96134,-0.29134)
	(1.96685,-0.26075)
	(1.9718,-0.23365)
	(1.97623,-0.20967)
	(1.98021,-0.18849)
	(1.98379,-0.16983)
	(1.98702,-0.15344)
	(1.98994,-0.1391)
	(1.9926,-0.12661)
	(1.99502,-0.11582)
	(1.99724,-0.10658)
	(1.99929,-0.09876)
	(2.0012,-0.092)
	(2.00297,-0.08534)
	(2.00462,-0.07867)
	(2.006,0)
	\psline[linecolor=blue](3.0252,-0.9702)
	(3.04629,-0.9302)
	(3.0645,-0.8902)
	(3.0819,-0.8502)
	(3.09851,-0.8102)
	(3.11432,-0.7702)
	(3.12933,-0.7302)
	(3.14354,-0.6902)
	(3.15694,-0.6502)
	(3.16955,-0.6102)
	(3.18136,-0.5702)
	(3.19237,-0.5302)
	(3.20258,-0.4902)
	(3.21198,-0.4502)
	(3.22059,-0.4102)
	(3.2284,-0.3702)
	(3.23541,-0.3302)
	(3.24162,-0.2902)
	(3.24702,-0.2502)
	(3.25163,-0.2102)
	(3.25544,-0.1702)
	(3.25845,-0.1302)
	(3.26066,-0.0902)
	(3.26206,-0.0502)
	(3.26267,-0.0102)
	(3.26248,0.0298)
	(3.26149,0.0698)
	(3.2597,0.1098)
	(3.2571,0.1498)
	(3.25371,0.1898)
	(3.24952,0.2298)
	(3.24453,0.2698)
	(3.23874,0.3098)
	(3.23214,0.3498)
	(3.22475,0.3898)
	(3.21656,0.4298)
	(3.20757,0.4698)
	(3.19778,0.5098)
	(3.18718,0.5498)
	(3.17579,0.5898)
	(3.1636,0.6298)
	(3.15061,0.6698)
	(3.13682,0.7098)
	(3.12222,0.7498)
	(3.10683,0.7898)
	(3.09064,0.8298)
	(3.07365,0.8698)
	(3.05586,0.9098)
	(3.03726,0.9498)
	(3.01003,0.9898)
	(2.9706,1.0298)
	(2.9306,1.0698)
	(2.8906,1.1098)
	(2.8506,1.1498)
	(2.8106,1.1898)
	(2.7706,1.2298)
	(2.7306,1.2698)
	(2.6906,1.3098)
	(2.6506,1.3498)
	(2.6106,1.3898)
	(2.5706,1.4298)
	(2.5306,1.4698)
	(2.4906,1.5098)
	(2.4506,1.5498)
	(2.4106,1.5898)
	(2.3706,1.6298)
	(2.3306,1.6698)
	(2.2906,1.7098)
	(2.2506,1.7498)
	(2.2106,1.7898)
	(2.1706,1.8298)
	(2.1306,1.8698)
	(2.0906,1.9098)
	(2.0506,1.9498)
	(2.02367,1.9898)
	(2.0161,2.0298)
	(2.00943,2.0698)
	(2.00277,2.1098)
	(1.9961,2.1498)
	(1.98943,2.1898)
	(1.98277,2.2298)
	(1.9761,2.2698)
	(1.96943,2.3098)
	(1.96277,2.3498)
	(1.9561,2.3898)
	(1.94943,2.4298)
	(1.94277,2.4698)
	(1.9361,2.5098)
	(1.92943,2.5498)
	(1.92277,2.5898)
	(1.9161,2.6298)
	(1.90943,2.6698)
	(1.90277,2.7098)
	(1.8961,2.7498)
	(1.88943,2.7898)
	(1.88277,2.8298)
	(1.8761,2.8698)
	(1.86943,2.9098)
	(1.86277,2.9498)
	(1.8561,2.9898)
	(1.84943,3.0252)
	(1.84277,3.04629)
	(1.8361,3.0645)
	(1.82943,3.0819)
	(1.82277,3.09851)
	(1.8161,3.11432)
	(1.80943,3.12933)
	(1.80277,3.14354)
	(1.7961,3.15694)
	(1.78943,3.16955)
	(1.78277,3.18136)
	(1.7761,3.19237)
	(1.76943,3.20258)
	(1.76277,3.21198)
	(1.7561,3.22059)
	(1.74943,3.2284)
	(1.74277,3.23541)
	(1.7361,3.24162)
	(1.72943,3.24702)
	(1.72277,3.25163)
	(1.7161,3.25544)
	(1.70943,3.25845)
	(1.70277,3.26066)
	(1.6961,3.26206)
	(1.68943,3.26267)
	(1.68277,3.26248)
	(1.6761,3.26149)
	(1.66943,3.2597)
	(1.66277,3.2571)
	(1.6561,3.25371)
	(1.64943,3.24952)
	(1.64277,3.24453)
	(1.6361,3.23874)
	(1.62943,3.23214)
	(1.62277,3.22475)
	(1.6161,3.21656)
	(1.60943,3.20757)
	(1.60277,3.19778)
	(1.5961,3.18718)
	(1.58943,3.17579)
	(1.58277,3.1636)
	(1.5761,3.15061)
	(1.56943,3.13682)
	(1.56277,3.12222)
	(1.5561,3.10683)
	(1.54943,3.09064)
	(1.54277,3.07365)
	(1.5361,3.05586)
	(1.52943,3.03726)
	(1.52277,3.01003)
	(1.5161,2.9706)
	(1.50943,2.9306)
	(1.50277,2.8906)
	(1.4961,2.8506)
	(1.48943,2.8106)
	(1.48277,2.7706)
	(1.4761,2.7306)
	(1.46943,2.6906)
	(1.46277,2.6506)
	(1.4561,2.6106)
	(1.44943,2.5706)
	(1.44277,2.5306)
	(1.4361,2.4906)
	(1.42943,2.4506)
	(1.42277,2.4106)
	(1.4161,2.3706)
	(1.40943,2.3306)
	(1.40277,2.2906)
	(1.3961,2.2506)
	(1.38943,2.2106)
	(1.38277,2.1706)
	(1.3761,2.1306)
	(1.36943,2.0906)
	(1.36277,2.0506)
	(1.3561,2.02367)
	(1.34943,2.0161)
	(1.34277,2.00943)
	(1.3361,2.00277)
	(1.32869,1.9961)
	(1.31722,1.98943)
	(1.30131,1.98277)
	(1.28095,1.9761)
	(1.25615,1.96943)
	(1.22691,1.96277)
	(1.19322,1.9561)
	(1.1551,1.94943)
	(1.1151,1.94277)
	(1.0751,1.9361)
	(1.0351,1.92943)
	(0.9951,1.92277)
	(0.9551,1.9161)
	(0.9151,1.90943)
	(0.8751,1.90277)
	(0.8351,1.8961)
	(0.7951,1.88943)
	(0.7551,1.88277)
	(0.7151,1.8761)
	(0.6751,1.86943)
	(0.6351,1.86277)
	(0.5951,1.8561)
	(0.5551,1.84943)
	(0.5151,1.84277)
	(0.4751,1.8361)
	(0.4351,1.82943)
	(0.3951,1.82277)
	(0.3551,1.8161)
	(0.3151,1.80943)
	(0.2751,1.80277)
	(0.2351,1.7961)
	(0.1951,1.78943)
	(0.1551,1.78277)
	(0.1151,1.7761)
	(0.0751,1.76943)
	(0.0351,1.76277)
	(-0.0049,1.7561)
	(-0.0449,1.74943)
	(-0.0849,1.74277)
	(-0.1249,1.7361)
	(-0.1649,1.72943)
	(-0.2049,1.72277)
	(-0.2449,1.7161)
	(-0.2849,1.70943)
	(-0.3249,1.70277)
	(-0.3649,1.6961)
	(-0.4049,1.68943)
	(-0.4449,1.68277)
	(-0.4849,1.6761)
	(-0.5249,1.66943)
	(-0.5649,1.66277)
	(-0.6049,1.6561)
	(-0.6449,1.64943)
	(-0.6849,1.64277)
	(-0.7249,1.6361)
	(-0.7649,1.62943)
	(-0.8049,1.62277)
	(-0.8449,1.6161)
	(-0.8849,1.60943)
	(-0.9249,1.60277)
	(-0.9649,1.5961)
	(-1.0049,1.58943)
	(-1.0449,1.58277)
	(-1.0849,1.5761)
	(-1.1249,1.56943)
	(-1.1649,1.56277)
	(-1.2049,1.5561)
	(-1.2449,1.54943)
	(-1.2849,1.54277)
	(-1.3249,1.5361)
	(-1.3649,1.52943)
	(-1.4049,1.52277)
	(-1.4449,1.5161)
	(-1.4849,1.50943)
	(-1.5249,1.50277)
	(-1.5649,1.4961)
	(-1.6049,1.48943)
	(-1.6449,1.48277)
	(-1.6849,1.4761)
	(-1.7249,1.46943)
	(-1.7649,1.46277)
	(-1.8049,1.4561)
	(-1.8449,1.44943)
	(-1.8849,1.44277)
	(-1.9249,1.4361)
	(-1.9649,1.42943)
	(-2.0049,1.42277)
	(-2.0449,1.4161)
	(-2.0849,1.40943)
	(-2.1249,1.40277)
	(-2.1649,1.3961)
	(-2.2049,1.38943)
	(-2.2449,1.38277)
	(-2.2849,1.3761)
	(-2.3249,1.36943)
	(-2.3649,1.36277)
	(-2.4049,1.3561)
	(-2.4449,1.34943)
	(-2.4849,1.34277)
	(-2.5249,1.3361)
	(-2.5649,1.32869)
	(-2.6049,1.31722)
	(-2.6449,1.30131)
	(-2.6849,1.28095)
	(-2.7249,1.25615)
	(-2.7649,1.22691)
	(-2.8049,1.19322)
	(-2.8449,1.1551)
	(-2.8849,1.1151)
	(-2.9249,1.0751)
	(-2.9649,1.0351)
	(-3.00478,0.9951)
	(-3.03441,0.9551)
	(-3.05326,0.9151)
	(-3.07116,0.8751)
	(-3.08827,0.8351)
	(-3.10457,0.7951)
	(-3.12008,0.7551)
	(-3.13479,0.7151)
	(-3.14869,0.6751)
	(-3.1618,0.6351)
	(-3.1741,0.5951)
	(-3.18561,0.5551)
	(-3.19632,0.5151)
	(-3.20622,0.4751)
	(-3.21533,0.4351)
	(-3.22363,0.3951)
	(-3.23114,0.3551)
	(-3.23785,0.3151)
	(-3.24375,0.2751)
	(-3.24886,0.2351)
	(-3.25316,0.1951)
	(-3.25667,0.1551)
	(-3.25938,0.1151)
	(-3.26128,0.0751)
	(-3.26239,0.0351)
	(-3.26269,-0.0049)
	(-3.2622,-0.0449)
	(-3.26091,-0.0849)
	(-3.25881,-0.1249)
	(-3.25592,-0.1649)
	(-3.25222,-0.2049)
	(-3.24773,-0.2449)
	(-3.24244,-0.2849)
	(-3.23634,-0.3249)
	(-3.22945,-0.3649)
	(-3.22175,-0.4049)
	(-3.21326,-0.4449)
	(-3.20397,-0.4849)
	(-3.19387,-0.5249)
	(-3.18298,-0.5649)
	(-3.17128,-0.6049)
	(-3.15879,-0.6449)
	(-3.1455,-0.6849)
	(-3.1314,-0.7249)
	(-3.11651,-0.7649)
	(-3.10081,-0.8049)
	(-3.08432,-0.8449)
	(-3.06703,-0.8849)
	(-3.04893,-0.9249)
	(-3.02896,-0.9649)
	(-2.9955,-1.0049)
	(-2.9555,-1.0449)
	(-2.9155,-1.0849)
	(-2.8755,-1.1249)
	(-2.8355,-1.1649)
	(-2.7955,-1.2049)
	(-2.7555,-1.2449)
	(-2.7155,-1.2849)
	(-2.6755,-1.3249)
	(-2.6355,-1.3649)
	(-2.5955,-1.4049)
	(-2.5555,-1.4449)
	(-2.5155,-1.4849)
	(-2.4755,-1.5249)
	(-2.4355,-1.5649)
	(-2.3955,-1.6049)
	(-2.3555,-1.6449)
	(-2.3155,-1.6849)
	(-2.2755,-1.7249)
	(-2.2355,-1.7649)
	(-2.1955,-1.8049)
	(-2.1555,-1.8449)
	(-2.1155,-1.8849)
	(-2.0755,-1.9249)
	(-2.0373,-1.9649)
	(-2.02025,-2.0049)
	(-2.01358,-2.0449)
	(-2.00692,-2.0849)
	(-2.00025,-2.1249)
	(-1.99358,-2.1649)
	(-1.98692,-2.2049)
	(-1.98025,-2.2449)
	(-1.97358,-2.2849)
	(-1.96692,-2.3249)
	(-1.96025,-2.3649)
	(-1.95358,-2.4049)
	(-1.94692,-2.4449)
	(-1.94025,-2.4849)
	(-1.93358,-2.5249)
	(-1.92692,-2.5649)
	(-1.92025,-2.6049)
	(-1.91358,-2.6449)
	(-1.90692,-2.6849)
	(-1.90025,-2.7249)
	(-1.89358,-2.7649)
	(-1.88692,-2.8049)
	(-1.88025,-2.8449)
	(-1.87358,-2.8849)
	(-1.86692,-2.9249)
	(-1.86025,-2.9649)
	(-1.85358,-3.00478)
	(-1.84692,-3.03441)
	(-1.84025,-3.05326)
	(-1.83358,-3.07116)
	(-1.82692,-3.08827)
	(-1.82025,-3.10457)
	(-1.81358,-3.12008)
	(-1.80692,-3.13479)
	(-1.80025,-3.14869)
	(-1.79358,-3.1618)
	(-1.78692,-3.1741)
	(-1.78025,-3.18561)
	(-1.77358,-3.19632)
	(-1.76692,-3.20622)
	(-1.76025,-3.21533)
	(-1.75358,-3.22363)
	(-1.74692,-3.23114)
	(-1.74025,-3.23785)
	(-1.73358,-3.24375)
	(-1.72692,-3.24886)
	(-1.72025,-3.25316)
	(-1.71358,-3.25667)
	(-1.70692,-3.25938)
	(-1.70025,-3.26128)
	(-1.69358,-3.26239)
	(-1.68692,-3.26269)
	(-1.68025,-3.2622)
	(-1.67358,-3.26091)
	(-1.66692,-3.25881)
	(-1.66025,-3.25592)
	(-1.65358,-3.25222)
	(-1.64692,-3.24773)
	(-1.64025,-3.24244)
	(-1.63358,-3.23634)
	(-1.62692,-3.22945)
	(-1.62025,-3.22175)
	(-1.61358,-3.21326)
	(-1.60692,-3.20397)
	(-1.60025,-3.19387)
	(-1.59358,-3.18298)
	(-1.58692,-3.17128)
	(-1.58025,-3.15879)
	(-1.57358,-3.1455)
	(-1.56692,-3.1314)
	(-1.56025,-3.11651)
	(-1.55358,-3.10081)
	(-1.54692,-3.08432)
	(-1.54025,-3.06703)
	(-1.53358,-3.04893)
	(-1.52692,-3.02896)
	(-1.52025,-2.9955)
	(-1.51358,-2.9555)
	(-1.50692,-2.9155)
	(-1.50025,-2.8755)
	(-1.49358,-2.8355)
	(-1.48692,-2.7955)
	(-1.48025,-2.7555)
	(-1.47358,-2.7155)
	(-1.46692,-2.6755)
	(-1.46025,-2.6355)
	(-1.45358,-2.5955)
	(-1.44692,-2.5555)
	(-1.44025,-2.5155)
	(-1.43358,-2.4755)
	(-1.42692,-2.4355)
	(-1.42025,-2.3955)
	(-1.41358,-2.3555)
	(-1.40692,-2.3155)
	(-1.40025,-2.2755)
	(-1.39358,-2.2355)
	(-1.38692,-2.1955)
	(-1.38025,-2.1555)
	(-1.37358,-2.1155)
	(-1.36692,-2.0755)
	(-1.36025,-2.0373)
	(-1.35358,-2.02025)
	(-1.34692,-2.01358)
	(-1.34025,-2.00692)
	(-1.33358,-2.00025)
	(-1.32488,-1.99358)
	(-1.31173,-1.98692)
	(-1.29415,-1.98025)
	(-1.27211,-1.97358)
	(-1.24563,-1.96692)
	(-1.21471,-1.96025)
	(-1.17935,-1.95358)
	(-1.14,-1.94692)
	(-1.1,-1.94025)
	(-1.06,-1.93358)
	(-1.02,-1.92692)
	(-0.98,-1.92025)
	(-0.94,-1.91358)
	(-0.9,-1.90692)
	(-0.86,-1.90025)
	(-0.82,-1.89358)
	(-0.78,-1.88692)
	(-0.74,-1.88025)
	(-0.7,-1.87358)
	(-0.66,-1.86692)
	(-0.62,-1.86025)
	(-0.58,-1.85358)
	(-0.54,-1.84692)
	(-0.5,-1.84025)
	(-0.46,-1.83358)
	(-0.42,-1.82692)
	(-0.38,-1.82025)
	(-0.34,-1.81358)
	(-0.3,-1.80692)
	(-0.26,-1.80025)
	(-0.22,-1.79358)
	(-0.18,-1.78692)
	(-0.14,-1.78025)
	(-0.1,-1.77358)
	(-0.06,-1.76692)
	(-0.02,-1.76025)
	(0.02,-1.75358)
	(0.06,-1.74692)
	(0.1,-1.74025)
	(0.14,-1.73358)
	(0.18,-1.72692)
	(0.22,-1.72025)
	(0.26,-1.71358)
	(0.3,-1.70692)
	(0.34,-1.70025)
	(0.38,-1.69358)
	(0.42,-1.68692)
	(0.46,-1.68025)
	(0.5,-1.67358)
	(0.54,-1.66692)
	(0.58,-1.66025)
	(0.62,-1.65358)
	(0.66,-1.64692)
	(0.7,-1.64025)
	(0.74,-1.63358)
	(0.78,-1.62692)
	(0.82,-1.62025)
	(0.86,-1.61358)
	(0.9,-1.60692)
	(0.94,-1.60025)
	(0.98,-1.59358)
	(1.02,-1.58692)
	(1.06,-1.58025)
	(1.1,-1.57358)
	(1.14,-1.56692)
	(1.18,-1.56025)
	(1.22,-1.55358)
	(1.26,-1.54692)
	(1.3,-1.54025)
	(1.34,-1.53358)
	(1.38,-1.52692)
	(1.42,-1.52025)
	(1.46,-1.51358)
	(1.5,-1.50692)
	(1.54,-1.50025)
	(1.58,-1.49358)
	(1.62,-1.48692)
	(1.66,-1.48025)
	(1.7,-1.47358)
	(1.74,-1.46692)
	(1.78,-1.46025)
	(1.82,-1.45358)
	(1.86,-1.44692)
	(1.9,-1.44025)
	(1.94,-1.43358)
	(1.98,-1.42692)
	(2.02,-1.42025)
	(2.06,-1.41358)
	(2.1,-1.40692)
	(2.14,-1.40025)
	(2.18,-1.39358)
	(2.22,-1.38692)
	(2.26,-1.38025)
	(2.3,-1.37358)
	(2.34,-1.36692)
	(2.38,-1.36025)
	(2.42,-1.35358)
	(2.46,-1.34692)
	(2.5,-1.34025)
	(2.54,-1.33358)
	(2.58,-1.32488)
	(2.62,-1.31173)
	(2.66,-1.29415)
	(2.7,-1.27211)
	(2.74,-1.24563)
	(2.78,-1.21471)
	(2.82,-1.17935)
	(2.86,-1.14)
	(2.9,-1.1)
	(2.94,-1.06)
	(2.98,-1.02)
	(3.01794,-0.98)
	(3.04171,-0.94)
	(3.06011,-0.9)
	(3.07771,-0.86)
	(3.09452,-0.82)
	(3.11052,-0.78)
	(3.12573,-0.74)
	(3.14013,-0.7)
	(3.15373,-0.66)
	(3.16654,-0.62)
	(3.17854,-0.58)
	(3.18975,-0.54)
	(3.20015,-0.5)
	(3.20975,-0.46)
	(3.21856,-0.42)
	(3.22656,-0.38)
	(3.23377,-0.34)
	(3.24017,-0.3)
	(3.24577,-0.26)
	(3.25058,-0.22)
	(3.25458,-0.18)
	(3.25779,-0.14)
	(3.26019,-0.1)
	(3.26179,-0.06)
	(3.2626,-0.02)
	(3.2626,0.02)
	(3.26181,0.06)
	(3.26021,0.1)
	(3.25781,0.14)
	(3.25462,0.18)
	(3.25062,0.22)
	(3.24583,0.26)
	(3.24023,0.3)
	(3.23383,0.34)
	(3.22664,0.38)
	(3.21864,0.42)
	(3.20985,0.46)
	(3.20025,0.5)
	(3.18985,0.54)
	(3.17866,0.58)
	(3.16666,0.62)
	(3.15387,0.66)
	(3.14027,0.7)
	(3.12587,0.74)
	(3.11068,0.78)
	(3.09468,0.82)
	(3.07789,0.86)
	(3.06029,0.9)
	\end{pspicture}
	\caption{Traces of $p$ and $q$ in the $(x(t),x(t-1))$ plane}\label{fig: 5}
\end{figure}
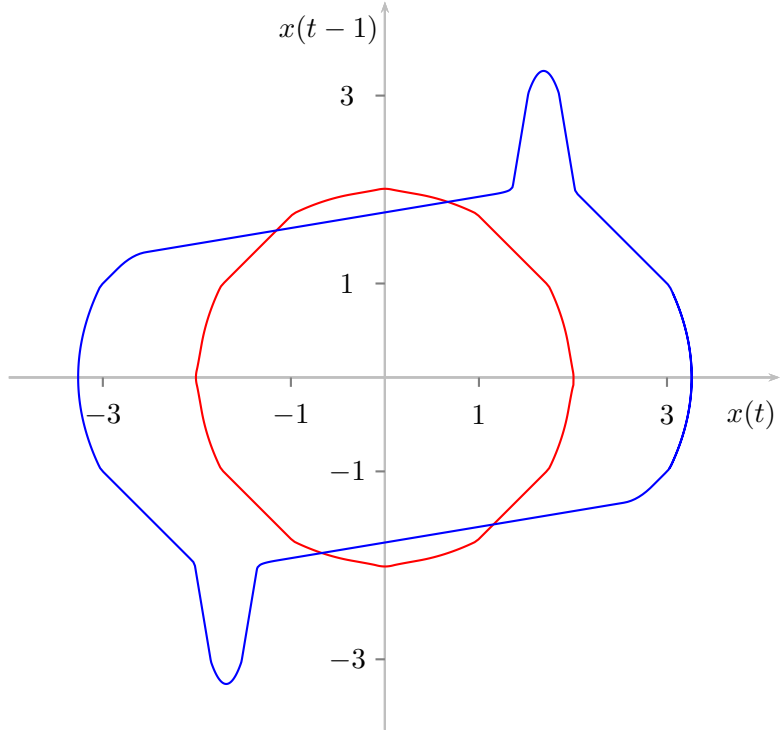

\section{Third example: many SOP solutions} \label{SEC5}

An idea similar to that in this section is described at the end of Vas's paper \cite{Vas 2011}.

Observe that, other than the differentiability of $f$ at 0 and the negative feedback condition itself, the only restriction on $f$ in $[0,2a]$ required in Proposition \ref{prop: example 2} is that $f(x) \in [-\gamma,0)$ for all $x \in (0,2a]$.   This observation now allows us to specify $f$ at different ``scales'' and so create an equation with as many periodic solutions as we wish.

Let $\gamma_n$, $n \in \N$, be a sequence of positive reals satisfying $\gamma_{n+1} < \gamma_n/4$.  Then, for each $\gamma_n$, let us take (for example)
\[
a_n = \gamma_n/4, \ \delta_n = \gamma_n/8, \ c_n = \gamma_n/64.
\]
Then $\gamma_n,\ a_n,\ \delta_n,\ c_n$ together satisfy conditions (i) -- (iii) above.  Given any $M \in \N \cup \lbrace\infty\rbrace$, with $M > 1$, there exists a bounded, odd, continuous function $f$ satisfying the negative feedback condition and satisfying the following version of (\ref{eq: assumptions 3}) for all $n < M$:
\begin{equation}
  \left\{\begin{array}{l}\mbox{$f$ is odd and $|f| \leq \gamma_n$ on $[0,2a_n]$;}\\[0.2cm]
\mbox{$f(x)=\begin{cases}-\gamma_n,& \text{for }x\in[a_n,2a_n-c_n];\\
-\delta_n,& \text{for }x\in[2a_n,\gamma_n].\end{cases}$}\\
  \end{array} \right.
\end{equation}
If $M < \infty$, we can moreover take $f$ to be differentiable at $0$.  According to our work last section, for such $f$ Eq. \eqref{eq: proto} has $M-1$ stable SOP solutions whose segments are fixed points of $P$.

Suppose we take $\gamma_1 = 5$ and $\gamma_2 = 1$, and $a_n, c_n, \delta_n$ as described above for $n \in \{1,2\}$.  We take $f$ piecewise linear.  The two stable periodic solutions are readily seen in numerical approximation; see figure below.

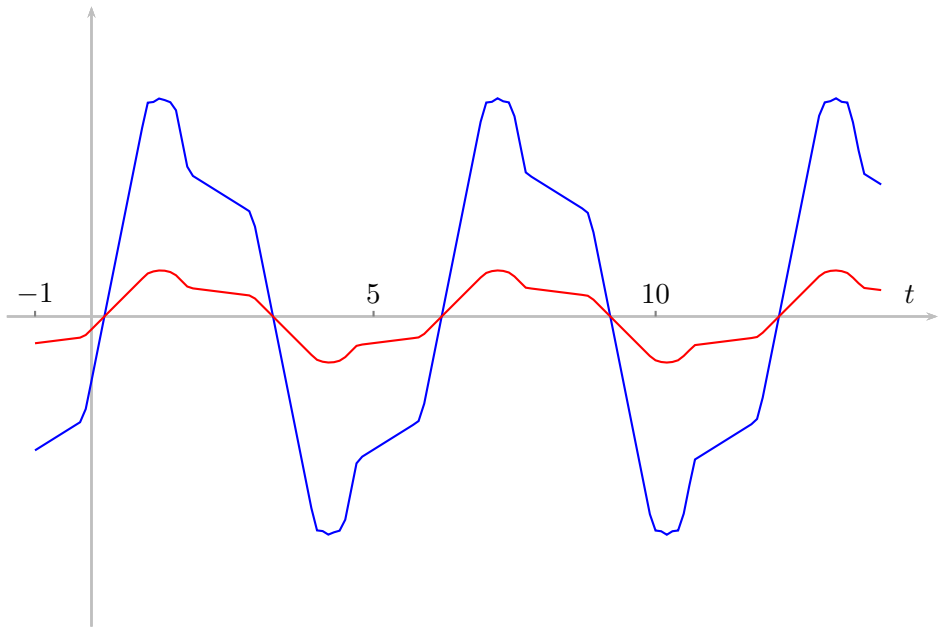
\begin{figure}[htb]
	\centering
	\psset{unit=0.75cm}
	\begin{pspicture}(-2,-6)(15,6)
	\psaxes[labels=none,linecolor=lightgray,ticks=none]{->}(0,0)(-1.5,-5.5)(15.,5.5)
	\rput(-1.,0.4){$-1$}
	\psline[linecolor=gray](-1.0,0)(-1.0,0.1)
	\rput(5,0.4){$5$}
	\psline[linecolor=gray](5.0,0)(5.0,0.1)
	\rput(10,0.4){$10$}
	\psline[linecolor=gray](10,0)(10,0.1)
	\rput(14.5,0.4){$t$}
	
	\psline[linecolor=blue](-1.001,-2.37468)
	(-0.902,-2.31281)
	(-0.802,-2.25031)
	(-0.702,-2.18781)
	(-0.602,-2.12531)
	(-0.502,-2.06281)
	(-0.402,-2.00031)
	(-0.302,-1.93781)
	(-0.202,-1.87531)
	(-0.102,-1.64126)
	(-0.002,-1.15308)
	(0.098,-0.65308)
	(0.198,-0.15308)
	(0.298,0.34692)
	(0.398,0.84692)
	(0.498,1.34692)
	(0.598,1.84692)
	(0.698,2.34692)
	(0.798,2.84692)
	(0.898,3.34692)
	(0.998,3.79294)
	(1.098,3.80544)
	(1.198,3.8664)
	(1.298,3.83908)
	(1.398,3.80072)
	(1.498,3.65811)
	(1.598,3.15811)
	(1.698,2.65811)
	(1.798,2.49363)
	(1.898,2.43113)
	(1.998,2.36863)
	(2.098,2.30613)
	(2.198,2.24363)
	(2.298,2.18113)
	(2.398,2.11863)
	(2.498,2.05613)
	(2.598,1.99363)
	(2.698,1.93113)
	(2.798,1.86698)
	(2.898,1.59554)
	(2.998,1.09964)
	(3.098,0.59964)
	(3.198,0.09964)
	(3.298,-0.40036)
	(3.398,-0.90036)
	(3.498,-1.40036)
	(3.598,-1.90036)
	(3.698,-2.40036)
	(3.798,-2.90036)
	(3.898,-3.40036)
	(3.998,-3.79431)
	(4.098,-3.80681)
	(4.198,-3.86919)
	(4.298,-3.82842)
	(4.398,-3.79942)
	(4.498,-3.60465)
	(4.598,-3.10465)
	(4.698,-2.60465)
	(4.798,-2.48698)
	(4.898,-2.42448)
	(4.998,-2.36198)
	(5.098,-2.29948)
	(5.198,-2.23698)
	(5.298,-2.17448)
	(5.398,-2.11198)
	(5.498,-2.04948)
	(5.598,-1.98698)
	(5.698,-1.92448)
	(5.798,-1.85475)
	(5.898,-1.54608)
	(5.998,-1.04646)
	(6.098,-0.54646)
	(6.198,-0.04646)
	(6.298,0.45354)
	(6.398,0.95354)
	(6.498,1.45354)
	(6.598,1.95354)
	(6.698,2.45354)
	(6.798,2.95354)
	(6.898,3.45354)
	(6.998,3.79559)
	(7.098,3.80809)
	(7.198,3.87075)
	(7.298,3.81774)
	(7.398,3.79803)
	(7.498,3.55143)
	(7.598,3.05143)
	(7.698,2.55619)
	(7.798,2.48027)
	(7.898,2.41777)
	(7.998,2.35527)
	(8.098,2.29277)
	(8.198,2.23027)
	(8.298,2.16777)
	(8.398,2.10527)
	(8.498,2.04277)
	(8.598,1.98027)
	(8.698,1.91777)
	(8.798,1.83839)
	(8.898,1.49281)
	(8.998,0.99281)
	(9.098,0.49281)
	(9.198,-0.00719)
	(9.298,-0.50719)
	(9.398,-1.00719)
	(9.498,-1.50719)
	(9.598,-2.00719)
	(9.698,-2.50719)
	(9.798,-3.00719)
	(9.898,-3.50719)
	(9.998,-3.79694)
	(10.098,-3.80956)
	(10.198,-3.87123)
	(10.298,-3.80921)
	(10.398,-3.79671)
	(10.498,-3.49782)
	(10.598,-2.99782)
	(10.698,-2.53608)
	(10.798,-2.47358)
	(10.898,-2.41108)
	(10.998,-2.34858)
	(11.098,-2.28608)
	(11.198,-2.22358)
	(11.298,-2.16108)
	(11.398,-2.09858)
	(11.498,-2.03608)
	(11.598,-1.97358)
	(11.698,-1.91108)
	(11.798,-1.81806)
	(11.898,-1.43929)
	(11.998,-0.93929)
	(12.098,-0.43929)
	(12.198,0.06071)
	(12.298,0.56071)
	(12.398,1.06071)
	(12.498,1.56071)
	(12.598,2.06071)
	(12.698,2.56071)
	(12.798,3.06071)
	(12.898,3.56071)
	(12.998,3.79831)
	(13.098,3.81965)
	(13.198,3.8706)
	(13.298,3.80791)
	(13.398,3.79541)
	(13.498,3.44431)
	(13.598,2.94431)
	(13.698,2.52943)
	(13.798,2.46693)
	(13.898,2.40443)
	(13.998,2.34193)
	\psline[linecolor=red](-1.001,-0.47494)
	(-0.902,-0.46257)
	(-0.802,-0.45007)
	(-0.702,-0.43757)
	(-0.602,-0.42507)
	(-0.502,-0.41257)
	(-0.402,-0.40007)
	(-0.302,-0.38757)
	(-0.202,-0.37507)
	(-0.102,-0.32828)
	(-0.002,-0.23065)
	(0.098,-0.13065)
	(0.198,-0.03065)
	(0.298,0.06935)
	(0.398,0.16935)
	(0.498,0.26935)
	(0.598,0.36935)
	(0.698,0.46935)
	(0.798,0.56935)
	(0.898,0.66935)
	(0.998,0.76348)
	(1.098,0.79971)
	(1.198,0.81594)
	(1.298,0.81216)
	(1.398,0.78839)
	(1.498,0.73165)
	(1.598,0.63165)
	(1.698,0.53165)
	(1.798,0.49873)
	(1.898,0.48623)
	(1.998,0.47373)
	(2.098,0.46123)
	(2.198,0.44873)
	(2.298,0.43623)
	(2.398,0.42373)
	(2.498,0.41123)
	(2.598,0.39873)
	(2.698,0.38623)
	(2.798,0.3734)
	(2.898,0.31911)
	(2.998,0.21993)
	(3.098,0.11993)
	(3.198,0.01993)
	(3.298,-0.08007)
	(3.398,-0.18007)
	(3.498,-0.28007)
	(3.598,-0.38007)
	(3.698,-0.48007)
	(3.798,-0.58007)
	(3.898,-0.68007)
	(3.998,-0.76832)
	(4.098,-0.80241)
	(4.198,-0.81649)
	(4.298,-0.81058)
	(4.398,-0.78467)
	(4.498,-0.72093)
	(4.598,-0.62093)
	(4.698,-0.52093)
	(4.798,-0.49739)
	(4.898,-0.48489)
	(4.998,-0.47239)
	(5.098,-0.45989)
	(5.198,-0.44739)
	(5.298,-0.43489)
	(5.398,-0.42239)
	(5.498,-0.40989)
	(5.598,-0.39739)
	(5.698,-0.38489)
	(5.798,-0.37095)
	(5.898,-0.30921)
	(5.998,-0.20928)
	(6.098,-0.10928)
	(6.198,-0.00928)
	(6.298,0.09072)
	(6.398,0.19072)
	(6.498,0.29072)
	(6.598,0.39072)
	(6.698,0.49072)
	(6.798,0.59072)
	(6.898,0.69072)
	(6.998,0.77289)
	(7.098,0.80485)
	(7.198,0.81681)
	(7.298,0.80877)
	(7.398,0.78072)
	(7.498,0.71028)
	(7.598,0.61028)
	(7.698,0.51124)
	(7.798,0.49606)
	(7.898,0.48356)
	(7.998,0.47106)
	(8.098,0.45856)
	(8.198,0.44606)
	(8.298,0.43356)
	(8.398,0.42106)
	(8.498,0.40856)
	(8.598,0.39606)
	(8.698,0.38356)
	(8.798,0.36768)
	(8.898,0.29858)
	(8.998,0.19858)
	(9.098,0.09858)
	(9.198,-0.00142)
	(9.298,-0.10142)
	(9.398,-0.20142)
	(9.498,-0.30142)
	(9.598,-0.40142)
	(9.698,-0.50142)
	(9.798,-0.60142)
	(9.898,-0.70142)
	(9.998,-0.77727)
	(10.098,-0.80709)
	(10.198,-0.8169)
	(10.298,-0.80672)
	(10.398,-0.77654)
	(10.498,-0.69959)
	(10.598,-0.59959)
	(10.698,-0.50722)
	(10.798,-0.49472)
	(10.898,-0.48222)
	(10.998,-0.46972)
	(11.098,-0.45722)
	(11.198,-0.44472)
	(11.298,-0.43222)
	(11.398,-0.41972)
	(11.498,-0.40722)
	(11.598,-0.39472)
	(11.698,-0.38222)
	(11.798,-0.36361)
	(11.898,-0.28787)
	(11.998,-0.18787)
	(12.098,-0.08787)
	(12.198,0.01213)
	(12.298,0.11213)
	(12.398,0.21213)
	(12.498,0.31213)
	(12.598,0.41213)
	(12.698,0.51213)
	(12.798,0.61213)
	(12.898,0.71213)
	(12.998,0.78143)
	(13.098,0.8091)
	(13.198,0.81677)
	(13.298,0.80445)
	(13.398,0.77212)
	(13.498,0.68887)
	(13.598,0.58887)
	(13.698,0.50588)
	(13.798,0.49338)
	(13.898,0.48088)
	(13.998,0.46838)
	\end{pspicture}
	\caption{Two stable periodic solutions}\label{fig: 6}
\end{figure}


\begin{thebibliography}{99}
	\bibitem{Alt 1979} W. Alt,  Periodic Solutions of Some Autonomous Differential Equations with Variable Time Delay.  In H.-O. Peitgen, and H.-O.  Walther (eds.), Functional Differential Equations and Approximation of Fixed Points, Bonn, Germany 1978, Springer, New York et al., 16--31.
	\bibitem{Browder 1965} F. E. Browder, A further generalization of the Schauder fixed point theorem, \emph{Duke Mathematical Journal} 23 (1965), 575-578.
	\bibitem{Cao 1995} Y. Cao, Multiexistence of slowly oscillating periodic solutions for differential delay equations, \emph{SIAM Journal of Mathematical Analysis 26} (1995), 436-445.
	\bibitem{Cao 1996} Y. Cao, Uniqueness of Periodic Solution for Differential Delay Equations, \emph{Journal of Differential Equations 128} (1996), 46-57.
	\bibitem{Diekmann} O. Diekmann, S. A. van Gils, S. M. Verduyn Lunel, and H.-O. Walther, \emph{Delay Equations: Functional-, Complex-, and Nonlinear Analysis}, Applied Mathematical Sciences 110, Springer-Verlag 1995.
	\bibitem{Granas 2003} A. Granas, and J. Dugundji, \emph{Fixed Point Theory}, Springer Monographs in Mathematics, Springer-Verlag, 2003.
	\bibitem{Hadeler and Tomiuk 1977} K. P. Hadeler, and J. Tomiuk, Periodic Solutions of Difference-Differential Equations, \emph{Arch. Rational Mech. Anal 65} (1977), 87-95.
	\bibitem{Ivanov and Losson 1999} A. F. Ivanov, and J. Losson, Stable rapidly oscillating solutions in delay equations with negative feedback, \emph{Differential and Integral Equations 12} (1999), 811-832.
	\bibitem{Kaplan and Yorke 1974} J. L. Kaplan, and J. A. Yorke, Ordinary Differential Equations which Yield Periodic Solutions of Differential Delay Equations, \emph{Journal of Mathematical Analysis and Applications 48} (1974), 317-324.
	\bibitem{Kaplan and Yorke 1975} J. L. Kaplan, and J. A. Yorke, On the stability of a periodic solution of a differential delay equation, \emph{SIAM Journal of Mathematical Analysis 6} (1975), 268-282.
	\bibitem{Kuang and Smith 1992} Y. Kuang, and H. L. Smith, Slowly Oscillating Periodic Solutions of Autonomous State-dependent Delay Equations, \emph{Nonlinear Analysis, Theory, Methods \& Applications} 19 (1992), 855--872.
	\bibitem{LR} E. Liz, and G. R\"{o}st, Dichotomy results for delay differential equations with negative Schwarzian derivative, \emph{Nonlinear Analysis: Real World Applications} 11 (2010), 1422-1430.
	\bibitem{MPW} J. Mallet-Paret, and H.-O. Walther, Rapid oscillations are rare in scalar systems governed by monotone negative feedback with a time lag, preprint.
	\bibitem{RDN 1973} R. D. Nussbaum, Periodic Solutions of Some Nonlinear Autonomous Functional Differential Equations II, \emph{Journal of Differential Equations} 14 (1973), 360-394.
	\bibitem{RDN 1974} R. D. Nussbaum, Periodic Solutions of Some Nonlinear Autonomous Functional Differential Equations, \emph{Annali di Matematica Pura ed Applicata} 101 (1974), 263--306.
	\bibitem{RDN 1979} R. D. Nussbaum, Uniqueness and Nonuniqueness for Periodic Solutions of $x'(t) = -g(x(t-1))$, \emph{Journal of Differential Equations} 34 (1979), 25-54.
	\bibitem{Peters 1983} H. Peters, Chaotic behavior of nonlinear differential-delay equations, \emph{Nonlinear Analysis 7} (1983), 1315-1334.
	\bibitem{Siegberg 1984} H. W. Siegberg, Chaotic behavior of a class of differential-delay equations, \emph{Annali di Matematica Pura ed Applicata 138} (1984), 15-33.
	\bibitem{Skubachevskii and Walther 2006} A. L. Skubachevskii, and H.-O. Walther, On the Floquet Multipliers of Periodic Soutions to Non-linear Functional Differential Equations, \emph{Journal of Dynamics and Differential Equations 18} (2006), 257-355.
	\bibitem{Stoffer 2008} D. Stoffer, Delay equations with rapidly oscillating stable periodic solutions, \emph{Journal of Dynamics and Differential Equations 20} (2008), 201-238.
	\bibitem{Vas 2011} G. Vas, Inifinite number of stable periodic solutions for an equation with negative feedback, \emph{Electronic Journal of Qualitative Theory of Differential Equations no. 18} (2011), 1-20.
	\bibitem{Xie 1991} X. Xie, Uniqueness and Stability of Slowly Oscillating Periodic Solutions of Delay Equations with Bounded Nonlinearity, \emph{Journal of Dynamics and Differential Equations 3} (1991), 515-540.
	\bibitem{Walther 1981} H.-O. Walther, Density of slowly oscillating solutions of $\dot{x}(t) = -f(x(t-1))$, \emph{Journal of Mathematical Analysis and Applications 79} (1981), 127-140.
	\bibitem{Walther 1995} H.-O. Walther, The $2$-dimensional attractor of $x'(t) = -\mu x(t) + f(x(t-1))$, \emph{Memoirs of the American Mathematical Society} 544, American Mathematical Society, Providence, Rhode Island, 1995.
	\bibitem{Walther 2001} H.-O. Walther, Contracting return maps for monotone delayed feedback, \emph{Discrete and Continuous Dynamical Systems 7} (2001), 515-540.
	\bibitem{Walther 2008} H.-O. Walther,  A Periodic Solution of a Differential Equation with State-Dependent Delay, \emph{Journal of Differential Equations} 244 (2008), 1910--1945.
\end{thebibliography}
\end{document}